\newtheorem{theorem}{Theorem}[section]
\newtheorem{lemma}[theorem]{Lemma}
\newtheorem{proposition}[theorem]{Proposition}
\newtheorem{corollary}[theorem]{Corollary}
\theoremstyle{definition}
\newtheorem{definition}[theorem]{Definition}
\theoremstyle{remark}
\newcommand{\B}{\mathcal{B}}
\renewcommand{\H}{\mathcal{H}}
\newcommand{\K}{\mathcal K}
\newcommand{\M}{\mathcal{M}}
\newcommand\ftwo{{\mathbb F}_2}
\newcommand\osr{{\mathcal R}}
\newcommand\oss{{\mathcal S}}
\newcommand\ost{{\mathcal T}}
\newcommand\omin{\otimes_{\textrm{min}}}
\newcommand\omax{\otimes_{\textrm{max}}}
\newcommand\oc{\otimes_{\textrm c}}
\newcommand{\A}{\mathcal{A}}                    %%% C*-alg A
\newcommand{\jay}{\mathcal{J}}                   %%% ideal J
\newcommand{\kay}{\mathcal{K}}                 %%% ideal K
\newcommand\cstar{{\textrm C}^*}                              %%% C$^*$-algebra generated by
\newcommand\cstare{{\textrm C}_{\textrm{min}}^*}              %%% C$^*$-envelope of
\numberwithin{equation}{section}
\begin{document}

\title[Representations of Noncommutative Cubes and Prisms]{Representations of Noncommutative Cubes and Prisms}

\author[D.~Farenick, R.~Maleki, S.~Medina Varela, S.~Singla]{Douglas Farenick, Roghayeh Maleki, Sofia Medina Varela, and Sushil Singla}
\address{Department of Mathematics and Statistics, University of Regina, Regina, Saskatchewan S4S 0A2, Canada}

\email{douglas.farenick@uregina.ca}
\email{roghayeh.maleki@uregina.ca}
\email{smc472@uregina.ca}
\email{sushil@uregina.ca}
 
\subjclass[2020]{46L07, 46L05, 47A80, 47L25}

%------
% Add a list of keywords.
%------
\keywords{operator system, noncommutative convex set, noncommutative state,
noncommutative cube, noncommmutative prism, dilation, tensor product, joint numerical range}

\begin{abstract}
Representations of the operator system
determined by the canonical generators of the free product of two cyclic groups of order $2$ and $k$, or $d$ cyclic groups
of order $2$, are studied for the purpose of shedding light on the noncommutative geometry of
noncommutative $d$-cubes and $k$-prisms. By way of the duality of the categories ${\textrm{NCConv}}$ and ${\textrm{OpSys}}$
of noncommutative convex sets and operator systems, respectively, an analysis of noncommutative extreme points,
exactness, the lifting property, 
automatic complete positivity, controlled completely positive extensions, tensor products, and operator system duality is undertaken.
Of note is the pairing of two classical dilation theorems of Halmos and Mirman   
to give a complete description of the
noncommutative triangular prism in terms of joint unitary dilations.
\end{abstract}

\maketitle

%------
% INSERT THE BODY OF THE PAPER HERE (except
% acknowledgments, funding info and bibliography)
%------

%%%%%%%%%%%%%%%%%%%%%%%%% Text of the Paper
 \section{Introduction}

In this article, separate, but categorically related objects, will be considered in tandem. In one category, these are noncommutative convex sets, while in the other category they are operator systems. The two relevant categories, ${\textrm{NCConv}}$ and ${\textrm{OpSys}}$, described below, are dual to each other; hence, we study objects that are sometimes viewed geometrically and sometimes algebraically. 

The underlying classical geometric objects considered in this paper are cubes $\mbox{\textrm C}(d)$ in $\mathbb R^d$, with $d\geq 2$, which are formed by Cartesian products of $d$ copies of the interval $[-1,1]$, and prisms $\mbox{\textrm P}(k)$ in $\mathbb R^3$, which arise from the Cartesian product of the convex hull of the $k$-th roots of unity in $\mathbb C$ and the interval $[-1,1]$. Two special cases, enjoying additional properties, are the square $\mbox{\textrm C}(2)$ and the triangular prism $\mbox{\textrm P}(3)$. In moving to noncommutative geometry, we shall be considering the following counterparts to the classical geometric objects: noncommutative cubes and prisms, 
which are (i) graded sets in the category ${\textrm{NCConv}}$ such that their first levels in the grading 
are given by $\mbox{\textrm C}(d)$ and $\mbox{\textrm P}(k)$, respectively, or (ii) operator subsystems of (full) group C$^*$-algebras $\cstar(\mathbb Z_2*\cdots*\mathbb Z_2)$ and $\cstar(\mathbb Z_k * \mathbb Z_2)$, respectively, determined by the canonical unitary generators, where $G_1*G_2$ denotes the free product of discrete groups $G_1$ and $G_2$. 

Two classical dilation theorems in operator theory suggest particular attention should be paid to the triangular prism $\mbox{\textrm P}(3)$ as its properties in the noncommutative setting are richer than those of the noncommutative prisms arising from $\mbox{\textrm P}(k)$ for $k\geq 4$. 
The first of these results is the Halmos dilation theorem \cite{halmos1950}, which asserts that any Hilbert space contraction $x$ has a unitary dilation $u$. If, in addition, $x^*=x$, then Halmos's construction of the dilation $u$ shows that $u^*=u$; hence, every selfadjoint contraction dilates to a symmetry (that is, to a selfadjoint unitary).
The second motivating result is Mirman's dilation theorem \cite{mirman1968}: 
if $\mbox{\textrm T}\subset\mathbb C$ is a triangle with nonempty interior, and
if $x$ is a Hilbert space operator such that $\langle x\xi,\xi\rangle\in \mbox{\textrm T}$, for every unit vector $\xi$, then $x$ has a dilation to a normal operator $y$ whose spectrum consists of the three vertices of the triangle $\mbox{\textrm T}$. 
Mirman's result does not extend to convex polygonal sets such as $\mbox{\textrm P}(k)\subset\mathbb C$, if $k\geq4$, but it does extend to simplicial sets $Q$ in $\mathbb R^d$, for all $d\geq2$ \cite{binding--farenick--li1995}.

The connection between the Halmos and Mirman theorems rests with the fact that an operator $x$ is a selfadjoint contraction if and only if $\langle x\xi,\xi\rangle\in[-1,1]$, for every unit vector $\xi$. In other words, both these results may be viewed as theorems concerning the classical numerical range of an operator. So, it is natural to consider a pairing of these theorems, which in this paper involves the triangular prism in $\mathbb R^3$ arising from the Cartesian product of the triangle determined by the cube roots of unity and the line segment $[-1,1]$.

In considering noncommutative cubes and prisms in the category ${\textrm{NCConv}}$, we shall be interested in their noncommutative extreme points, which arise from irreducible representations of the associated group C$^*$-algebra. We determine at which levels of the grading one can find these noncommutative extreme points, and uncover some of their properties. We also consider a weaker form of extremality by considering elements of noncommutative cubes or triangular prisms that generate a factor, and examine what factorial types may arise.

As objects in the category ${\textrm{OpSys}}$, we study the structure of noncommutative cubes and noncommutative
prisms by way of operator-theoretic properties, including exactness, the lifting property, 
automatic complete positivity, controlled completely positive extensions, tensor products, injectivity, and operator system duality. Some operator system tensor product identities were known previously to be equivalent to the Connes Embedding Problem (CEP), and so the resolution of the CEP in the negative distinguishes some operator system tensor products considered here involving noncommutative cubes and triangular prisms. 
Finally, in relation to recent work  \cite{harris2025,scherer2026} on the lifting property for operator systems of low dimension, 
we exhibit as an outcome of our study a four-dimensional operator system in $\mathbb C^5$ that does not have the lifting property. 
 
%%%%%%
\section{The Objects of Study}

The monograph of Blackadar \cite{Blackadar-book} serves as a basic reference for operator algebras; the papers of Choi and Effros \cite{choi--effros1977} and Kavruk, Paulsen, Todorov, and Tomforde \cite{kavruk--paulsen--todorov--tomforde2011,kavruk--paulsen--todorov--tomforde2013} are references for the operator system theory considered here. The memoir of Davidson and Kennedy \cite{davidson--kennedy2019}
is our primary reference for noncommutative convexity.

%%%
\subsection{Categorical structures}

We denote by ${\textrm{OpSys}}$ the category whose objects are operators systems, defined in \cite{choi--effros1977}
as matrix-ordered $*$-vector spaces over $\mathbb C$ possessing a distinguished Archimedean order unit, and whose morphisms are unital completely positive (ucp) linear maps. The category whose objects are noncommutative convex sets and whose morphisms are noncommutative affine functions, as defined in  \cite{davidson--kennedy2019}, is denoted by ${\textrm{NCConv}}$.

For a complex Hilbert space $\H$, let $\B(\H)$ denote the space of bounded linear operators on $\H$. 
The \emph{noncommutative state space} of a finite-dimensional operator system $\osr$ is the graded set $S_{\textrm{nc}}(\osr)$ given by
\[
S_{\textrm{nc}}(\osr)= \bigsqcup_{n\leq\aleph_0} S_n(\osr),
\]
where $S_n(\osr)$ is the set of all ucp maps $\phi:\osr\rightarrow\B(\ell^2(n))$. Such ucp maps $\phi$
are called \emph{noncommutative states} of $\osr$.
If we disregard $S_{\aleph_0}(\osr)$, then the graded set
\[
S_{\textrm{mtrx}}(\osr)= \bigsqcup_{n\in\mathbb N} S_n(\osr)
\]
is called the \emph{matrix state space} of $\osr$. (The matrix state spaces of infinite-dimensional 
operator systems $\osr$ are defined in exactly the
same way, but the noncommutative state space of an infinite-dimensional operator system $\osr$ could involve cardinalities beyond $\aleph_0$
\cite{davidson--kennedy2019}.)

If $\mathfrak x=(x_1,\dots,x_d)$ is a $d$-tuple of elements from an arbitrary operator system $\osr$, 
and if $\mathcal O_{\mathfrak x}$ is the operator subsystem of $\osr$ spanned by $\{1,x_1,x_1^*,\dots,x_d,x_d^*\}$, where $1=1_\osr$, 
the Archimedean order unit of $\osr$, 
then the \emph{noncommutative numerical range} of $\mathfrak x$ is the graded set
\[
W_{\textrm{nc}}(\mathfrak x)= \bigsqcup_{n\leq\aleph_0} W_n(\mathfrak x),
\]
where, for $n\in\mathbb N\cup\{\aleph_0\}$,
\[
W_n(\mathfrak x)=\left\{ \left(\phi(x_1),\phi(x_2),\dots,\phi(x_d)\right)\,|\,\phi\in S_n(\mathcal O_{\mathfrak x})\right\}.
\]
If we disregard the set $W_{\aleph_0}(\mathfrak x)$, then the graded set
\[
W_{\textrm{mtrx}}(\mathfrak x)= \bigsqcup_{n\in\mathbb N} W_n(\mathfrak x),
\]
is called the \emph{matrix range} of $\mathfrak x$.

We define the \emph{noncommutative complex plane} to be the graded set 
\[
\mathbb C_{\textrm{nc}} = \bigsqcup_{n\leq\aleph_0} \B(\ell^2(n)).
\]

Noncommutative state spaces, noncommutative numerical ranges, and the noncommutative complex plane
are examples of 
noncommutative convex sets. 
Recall from \cite{davidson--kennedy2019} 
that a \emph{noncommutative convex set} in a separable operator space $\mathcal V$
is a graded set
\[
K_{\textrm{nc}} = \bigsqcup_{n\leq\aleph_0} K_n,
\]
where $K_n\subseteq \M_n(\mathcal V)$ for every $n\in\mathbb N$, and $K_{\aleph_0}$ is a 
set of infinite matrices over $\mathcal V$ with uniformly bounded
finite submatrices, such that $K_{\textrm{nc}}$ is closed under direct sums and compressions. The latter two properties are:
\begin{enumerate}
\item if $\mathcal I$ is a finite or countably infinite set, and if $n$ and $\{n_i\}_{i\in\mathcal I}$ are cardinalities no larger than $\aleph_0$,
then for every family $\{a_{n,n_i}\}_{i\in \mathcal I}$ of isometries $a_{n,n_i}:\ell^2(n_i)\rightarrow\ell^2(n)$
such that $\sum_{i\in\mathcal I}a_{n,n_i}a_{n,n_i}^*=1_n$, and every bounded collection $\{\nu_{n_i}\}_{i\in\mathcal I}$ of 
$\nu_{n_i}\in K_{n_i}$, we have $\sum_{i\in\mathcal I}a_{n,n_i}\nu_{n_i}a_{n,n_i}^* \in K_n$, where the convergence of the sums,
when $\mathcal I$ is infinite, is with respect to the strong operator topology; and
\item $b^* \nu b\in K_n$, for every $x\in K_m$ and isometry $b:\ell^2(n)\rightarrow\ell^2(m)$.
\end{enumerate}
The noncommutative convex set $K_{\textrm{nc}}$ is \emph{closed} if $\mathcal V$ is a dual operator space and $K_n$ is closed in $\M_n(\mathcal V)$, for all $n$,
and \emph{compact} if each $K_n$ is compact in $\M_n(\mathcal V)$. The noncommutative convex sets considered in this paper occur with $\mathcal V=\mathbb C^d$
or $\mathcal V=\osr^\delta$, the operator system dual of a finite-dimensional operator system $\osr$.

If $K_{\textrm{nc}}$ is a noncommutative convex set, then the
set $A_{\textrm{nc}}( K_{\textrm{nc}})$ of all continuous noncommutative 
affine functions $K_{\textrm{nc}}\rightarrow\mathbb C_{\textrm{nc}}$
carries the structure of an operator system \cite[Sections 2.5 \& 3.2]{davidson--kennedy2019}.
In particular, with the noncommutative convex set $S_{\textrm{nc}}(\osr)$, where $\osr$ is an operator system of finite dimension, 
say $\osr=\mathcal O_{\mathfrak x}$ for some $d$-tuple $\mathfrak x$, then
the categorical duality results of Webster-Winker \cite{webster--winkler1999} and Davidson-Kennedy \cite{davidson--kennedy2019} assert
\begin{equation}\label{e:cd}
A_{\textrm{nc}}\left( S_{\textrm{nc}}(\mathcal O_{\mathfrak x})\right) \cong_{{\textrm{OpSys}}} \mathcal O_{\mathfrak x} \,\mbox{ and }\,
S_{\textrm{nc}}(\mathcal O_{\mathfrak x}) \cong_{{\textrm{NCConv}}} 
W_{\textrm{nc}}(\mathfrak x).
\end{equation}

%%%%%%%%
\subsection{Minimal and maximal operator systems and noncommutative convex sets}

\subsubsection{Minimal and maximal operator systems}
If $\osr$ is an operator system, then the triple $\mathfrak R=(\osr, \osr_+,e_\osr)$
is an Archimedean order unit space and an operator system $\tilde\osr$ is a \emph{realisation} of $\mathfrak R$ if
the Archimedean order unit spaces $\tilde{\mathfrak R}$ and $\mathfrak R$ coincide. As shown in \cite[\S3]{paulsen--todorov--tomforde2010},
there exist operator system realisations $\osr^{\textrm{min}}$ and $\osr^{\textrm{max}}$ of $\osr$ such that
\begin{equation}\label{e:omax-omin}
\osr^{\textrm{max}} \subseteq \osr \subseteq \osr^{\min},
\end{equation}
where the inclusions are as operator systems (in the sense that the identity maps $\osr^{\textrm{max}} \rightarrow \osr$ and $\osr  \rightarrow \osr^{\textrm{min}}$ are
completely positive). If $A(K)$ is the operator subsystem of the unital abelian C$^*$-algebra $C(K)$ consisting of continuous affine maps
$K\rightarrow\mathbb C$, where $K=S_1(\osr)$ 
with the weak$^*$-topology, then $\osr^{\textrm{min}}\cong_{\textrm{OpSys}}A(K)$ and, for every operator system operator system $\oss$,
every unital positive linear map $\oss\rightarrow\osr^{\textrm{min}}$ is completely positive.
The operator system $\osr^{\textrm{max}} $ has the property that, for each operator system $\ost$, every unital
positive linear map $\osr^{\textrm{max}} \rightarrow \ost$ is completely positive. Finally, these properties completely characterise $\osr^{\textrm{min}}$ and $\osr^{\textrm{max}}$ up to 
isomorphism in ${\textrm{OpSys}}$ \cite[Theorems 3.4 and 3.22]{paulsen--todorov--tomforde2010}, and are typically denoted in the literature by 
$\mbox{\textrm OMIN}(\osr)$ and $\mbox{\textrm OMAX}(\osr)$.

\subsubsection{Minimal and maximal noncommutative convex sets}
Our primary interest is with minimal and maximal noncommutative sets whose first level is a compact convex subset of $\mathbb R^d\subset\mathbb C^d$.
Such sets were considered in \cite{davisdon--dor-on--shalit--solel2017,passer--shalit--solel2018} in the setting of matrix convexity; in the setting of
noncommutative convexity, see \cite{blecher--mcclure2026}.

If $K\subseteq\mathbb R^d$ is a compact convex set and $n\in\mathbb N\cup\{\aleph_0\}$, then 
\begin{enumerate}
\item $K_n^{\textrm{max}}$ consists of all $d$-tuples $\mathfrak a=(a_1,\dots,a_d)$ 
of selfadjoint $a_j\in\B(\ell^2(n))$ for which $W_1(\mathfrak a)\subseteq K$, and
\item $K_n^{\textrm{min}}$ consists of all $d$-tuples $\mathfrak a=(a_1,\dots,a_d)$ 
of selfadjoint $a_j\in\B(\ell^2(n))$ for which there exist a Hilbert space $\K$ containing $\ell^2(n)$ as a subspace and 
commuting selfadjoint operators $y_1,\dots, y_d\in\B(\K)$
with joint spectrum contained in $K$ such that $a_j=py_j{}_{\vert\ell^2(n)}$, for each $j$, 
where $p\in\B(\K)$ is the projection with range $\ell^2(n)$. 
\end{enumerate}

The graded sets $K^{\textrm{min}}=\displaystyle\bigsqcup_{n\leq\aleph_0} K_n^{\textrm{min}}$ and
$K^{\textrm{max}}=\displaystyle\bigsqcup_{n\leq\aleph_0} K_n^{\textrm{max}}$ are called the \emph{minimal} and \emph{maximal}
noncommutative convex sets generated by $K$, and they have the following properties:
\[
K^{\textrm{min}}\subseteq K_{\textrm{nc}} \subseteq K^{\textrm{max}} \,\mbox{ and }\, K_1^{\textrm{min}}=K_1^{\textrm{max}}=K,
\]
 for every noncommutative convex set $K_{\textrm{nc}}$ whose first graded level $K_1$ is $K$.

In general, the inclusion $K^{\textrm{min}}  \subseteq K^{\textrm{max}}$ is strict; however, it is known that
$K^{\textrm{min}}  = K^{\textrm{max}}$ if and only if $K$ is a simplex in $\mathbb R^d$ \cite{passer--shalit--solel2018}. 

%%%%%%%%%%%%%%%
\subsubsection{Minimal and maximal noncommutative state spaces}

The following connection between the two previous notions of minimality and maximality is known, but in the absence of a specific reference for the result, it is
useful to formalise it here.

\begin{proposition}\label{min-max-ncs} Let $\{e,x_1,\dots,x_d\}$ denote a linear basis of selfadjoint elements
of an operator system $\mathcal O_{\mathfrak x}$ with Archimedean order unit $e$, and
let $W_1(\mathfrak x)\subset\mathbb R^d$ denote the joint numerical range of
$\mathfrak x=(x_1,\dots,x_d)$.
We have:
\begin{enumerate}
\item $\phi\in S_{\textrm{nc}}(\mathcal O_{\mathfrak x}^{\textrm{min}})$ if and only if 
$\phi(x_1),\dots,\phi(x_d)$ have a joint dilation to commuting selfadjoint operators with
joint spectrum contained in $K$; and
\item $\phi\in S_{\textrm{nc}}(\mathcal O_{\mathfrak x}^{\textrm{max}})$ if and only if $W_1(\phi(\mathfrak x))\subseteq W_1(\mathfrak x)$.
\end{enumerate}
That is, for $K=W_1(\mathfrak x)$, we have
\[
K^{\textrm{min}}\cong_{\textrm{NCConv}} S_{\textrm{nc}}(\mathcal O_{\mathfrak x}^{\textrm{min}}) \mbox{ and }
K^{\textrm{max}}\cong_{\textrm{NCConv}} S_{\textrm{nc}}(\mathcal O_{\mathfrak x}^{\textrm{max}}).
\]
\end{proposition}

\begin{proof} Let $K=W_1(\mathfrak x)$, a compact convex subset of $\mathbb R^d$ affinely homeomorphic to the state
space $S_1(\mathcal O_{\mathfrak x})$ of $\mathcal O_{\mathfrak x}$.
From the operator system inclusions (\ref{e:omax-omin}) for $\mathcal O_{\mathfrak x}$, we deduce
\begin{equation}\label{e:max-min}
S_{\textrm{nc}}(\mathcal O_{\mathfrak x}^{\textrm{min}}) \subseteq S_{\textrm{nc}}(\mathcal O_{\mathfrak x}) \subseteq S_{\textrm{nc}}(\mathcal O_{\mathfrak x}^{\max}),
\end{equation}
where the inclusions are as graded sets. 

To prove (1), note
the operator system $A(K)$ is generated by the coordinate functions
$\varepsilon_j:K\rightarrow\mathbb C$ in which $\varepsilon_j(\zeta)=\zeta_j$, for all $\zeta=(\zeta_1,\dots,\zeta_d)\in K$. 
Because $A(K)$ is an operator subsystem of $C(K)$, every pure state on
$A(K)$ extends to a pure state on $C(K)$; therefore, the pure states of $A(K)$ are point evaluations and $S_1(A(K))$ is affinely homeomorphic to $K$.
Similar reasoning applied to pure matrix states on $A(K)$ leads to matrix states on $A(K)$ represented as matrix convex combinations of pure states
\cite{farenick2000}. More generally, via the Stinespring
decomposition, noncommutative states on $A(K)$ arise as compressions of unital $^*$-representations of $C(K)$, we deduce all noncommutative
states $\phi$ on $\mathcal O_{\mathfrak x}^{\textrm{min}}$ arise from ucp maps on $C(K)$. Restated in the language of dilation theory,
$\phi\in S_{\textrm{nc}}(\mathcal O_{\mathfrak x}^{\textrm{min}})$ if and only if $\phi(x_1),\dots,\phi(x_d)$ have a joint dilation to commuting selfadjoint operators with
joint spectrum contained in $K$.

To prove (2), it is immediate
that $W_1(\phi(\mathfrak x))\subseteq W_1(\mathfrak x)$, for each $\phi\in S_{\textrm{nc}}(\mathcal O_{\mathfrak x})$. Conversely, if $\mathfrak y$ is a $d$-tuple of selfadjoint elements $y_j$ such that
$W_1(\mathfrak y)\subseteq W_1(\mathfrak x)$, 
and $\phi:\mathcal O_{\mathfrak x}\rightarrow\mathcal O_{\mathfrak y}$ is the unital linear map for which $\phi(x_j)=y_j$, for each $j$, then $\phi$ is a positive linear map. To see this, set $x_0=e$, 
assume $x=\sum_{j=0}^d\alpha_jx_j\in\mathcal O_{\mathfrak x}$ is positive, and consider $\phi(x)\in \mathcal O_{\mathfrak y}$. For each state $\delta$ on
$\mathcal O_{\mathfrak y}$, there is a state $\omega$ on $\mathcal O_{\mathfrak x}$ for which $\delta(y_j)=\omega(x_j)$, by the hypothesis $W_1(\mathfrak y)\subseteq W_1(\mathfrak x)$. Thus,
\[
\delta(y)=\sum_{j=1}^d \alpha_j\delta(y_j) = \sum_{j=1}^d \alpha_j\omega(x_j) =\omega(x)\geq 0,
\]
which implies $y$ is positive in $\mathcal O_{\mathfrak y}$. Because every unital positive linear map of $\mathcal O_{\mathfrak x}^{\textrm{max}}$ is completely positive, we deduce
$\phi\in S_{\textrm{nc}}\left(\mathcal O_{\mathfrak x}^{\textrm{max}}\right)$. Hence, the graded set $K^{\textrm{max}}$ of all 
$d$-tuples $\mathfrak a$ of selfadjoint operators for which $W_1(\mathfrak a)\subseteq W_1(\mathfrak x)=K$ is such that
$K^{\textrm{max}}\cong_{\textrm{NCConv}} S_{\textrm{nc}}(\mathcal O_{\mathfrak x}^{\textrm{max}})$.
\end{proof}
 
 %%%%%%%%
\subsection{Noncommutative cubes}

The \emph{noncommutative $d$-cube},
as defined in \cite{farenick--kavruk--paulsen--todorov2014}, 
is the operator subsystem $\mbox{\textrm{NC}}(d)$ 
of the group C$^*$-algebra
$\cstar(\mathbb Z_2*\cdots* \mathbb Z_2)$ of the free product of $d$ copies of $\mathbb Z_2$ that is given by
\[
\mbox{\textrm{NC}}(d)=\mbox{Span}\{1,u_1,\dots,u_d\}\subseteq \cstar(\mathbb Z_2*\cdots* \mathbb Z_2),
\]
where $u_1,\dots,u_d$ are the canonical generators of $\cstar(\mathbb Z_2*\cdots* \mathbb Z_2)$ 
determined by the $j$-th copy $u_j$ of the generator of $\mathbb Z_2$
in $\mathbb Z_2*\cdots* \mathbb Z_2$. Note that these
elements $u_1,\dots,u_d$ are symmetries and each $u_j$ is of the form $u_j=2p_j-1$, for some
projection $p_j\in \cstar(\mathbb Z_2*\cdots* \mathbb Z_2)$. 
Moreover, the
elements $u_1,\dots,u_d\in \mbox{\textrm{NC}}(d)$ are universal for selfadjoint contractions 
in the sense that if $a_1,\dots,a_d$ are any selfadjoint operators acting on a complex Hilbert space $\H$, then there is a unital completely positive
linear map $\phi:\mbox{\textrm{NC}}(d)\rightarrow\B(\H)$ 
such that $\phi(u_j)=a_j$, for each $j$ \cite[Proposition 4.1]{farenick--kavruk--paulsen--todorov2018}.

A second definition of the noncommutative $d$-cube is given by
the graded set 
 \[
[-1,1]^d_{\textrm{nc}}=\bigsqcup_{n\leq\aleph_0} [-1,1]^d_n,
\]
where, for $n\in\mathbb N\cup\{\aleph_0\}$,
\[
[-1,1]^d_n=\left\{ (a_1,\dots,a_d)\in\B(\ell^2(n))\,|\,a_j^*=a_j\mbox{ and }\|a_j\|\leq 1, \, 1\leq j\leq d \right\}.
\]

These two definitions are related via:

\begin{theorem}\label{free d-cubes} For every $d\geq 2$, we have
\[
\mbox{\textrm{NC}}(d)=\mbox{\textrm{NC}}(d)^{\textrm{max}}
\mbox{ and }\
[-1,1]^d_{\textrm{nc}}=([-1,1]^d)^{\textrm{max}},
\]
where, for $n\in\mathbb N\cup\{\aleph_0\}$,
\[
([-1,1]^d_{\textrm{nc}})_n=\left\{ (a_1,\dots,a_d)\in\B(\ell^2(n))\,|\,a_j^*=a_j\mbox{ and }\|a_j\|\leq 1, \, 1\leq j\leq d \right\}.
\]
\end{theorem}

\begin{proof}
The unitary generators $u_1,\dots,u_d$ of $\cstar(*_1^d\mathbb Z_2)$ are necessarily selfadjoint
because $u_j^2=1$; thus, the joint numerical range $W_1(\mathfrak u)$ of $\mathfrak u=(u_1,\dots,u_d)$ is contained in $\mbox{\textrm C}(d)$. Conversely,
recall the
elements $u_1,\dots,u_d\in \mbox{\textrm{NC}}(d)$ are universal for selfadjoint contractions 
in the sense that if $a_1,\dots,a_d$ are any selfadjoint operators acting on a complex Hilbert space $\H$, then there is a unital completely positive
linear map $\phi:\mbox{\textrm{NC}}(d)\rightarrow\B(\H)$ 
such that $\phi(u_j)=a_j$, for each $j$ \cite[Proposition 4.1]{farenick--kavruk--paulsen--todorov2018}. Therefore, each $\lambda\in \mbox{\textrm C}(d)$
arises from a classical state on $ \mbox{\textrm{NC}}(d)$, implying $\mbox{\textrm C}(d)\subseteq W_1(\mathfrak u)$. In other words, 
$W_{\textrm{nc}}(\mathfrak u)=[-1,1]^d_{\textrm{nc}}$ and
$[-1,1]^d_{\textrm{nc}}=([-1,1]^d)^{\textrm{max}}$. The assertion $\mbox{\textrm{NC}}(d)=\mbox{\textrm{NC}}(d)^{\textrm{max}}$ follows from categorical duality 
applied to 
\[
S_{\textrm{nc}}(\mbox{\textrm{NC}}(d)) \cong_{\textrm{NCConv}} [-1,1]^d_{\textrm{nc}} =[-1,1]^d_{\textrm{nc}}{}^{\textrm{max}}\cong_{\textrm{NCConv}} 
S_{\textrm{nc}}(\mbox{\textrm{NC}}(d)^{\textrm{max}}),
\]
or from \cite[Proposition 6.13]{farenick--kavruk--paulsen--todorov2014}.
\end{proof}

The operator theoretic properties of free $d$-cubes were 
studied in \cite{farenick--kavruk--paulsen--todorov2014, farenick--kavruk--paulsen--todorov2018}, while the convex
analysis of free $d$-cubes was considered, for example, in \cite{evert--helton--klep--mccullough2018}. 

%%%%%%%%
\subsection{Noncommutative prisms}  

If $k\in\mathbb N$ is such that $k\geq3$, then the \emph{classical $k$-prism}  $\mbox{\textrm P}(k)$ is given by 
\[
\mbox{\textrm P}(k)= \mbox{\textrm Conv}\,C_k \,\times\, \mbox{\textrm Conv}\,C_2 \subset \mathbb R^3,
\]
where $C_k$ denotes the representation of the additive cyclic group $\mathbb Z_k$ as the multiplicative subgroup of the unit circle $S^1$ 
consisting of the $k$-th roots of unity. As with noncommutative cubes, the noncommutative convex set $\mbox{\textrm P}(k)^{\textrm{max}}$ is called
a \emph{noncommutative $k$-prism} in the category ${\textrm{NCConv}}$. Thus, a triple $(a,b,c)$ of selfadjoint elements belongs to 
the graded set $\mbox{\textrm P}(k)^{\textrm{max}}$ if and only if the joint numerical range of $(a,b,c)$ is contained in the classical $k$-prism
$\mbox{\textrm P}(k)$.

The analogue of a noncommutative $k$-prism in the operator system category
is slightly different from $\mbox{\textrm P}(k)^{\textrm{max}}$ in which the underlying linear structure is not $3$-dimensional over $\mathbb R$.
In the group C$^*$-algebra $\cstar(\mathbb Z_k*\mathbb Z_2)$, let $w$ and $v$ denote the canonical unitaries that arise from the generators 
of the cyclic groups $\mathbb Z_k$ and $\mathbb Z_2$, respectively. Thus, $w^k=v^2=1$ and $w^*=w^{k-1}$. Define
\[
\mbox{\textrm{NCP}}(k)=\mbox{\textrm Span}\{1,w,w^2,\dots,w^{k-1},v\}\subseteq\cstar(\mathbb Z_k*\mathbb Z_2),
\]
which we call a \emph{noncommutative $k$-prism}. In the case $k=3$, the operator system 
\[
\mbox{\textrm{NCP}}(3)=\mbox{\textrm Span}\{1,w,w^2,v\}\subseteq\cstar(\mathbb Z_3*\mathbb Z_2)
\]
is called a \emph{noncommutative triangular prism}.

\begin{theorem}\label{ncp relations}
For every $k\geq 3$, 
\[
\mbox{\textrm P}(k)^{\textrm{min}} \subseteq W_{\textrm{nc}}(w,v) \subseteq \mbox{\textrm P}(k)^{\textrm{max}},
\]
where $w$ and $v$ are the canonical unitaries in $\cstar(\mathbb Z_k * \mathbb Z_2)$ that arise from the generators 
of the cyclic groups $\mathbb Z_k$ and $\mathbb Z_2$, respectively. 
\end{theorem}

In light of Theorem \ref{ncp relations}, we may view the noncommutative numerical  range $W_{\textrm{nc}}(w,v)$ as a noncommutative $k$-prism in the category 
${\textrm{NCConv}}$. Specifically, we define $\mbox{\textrm P}(k)_{\textrm{nc}}$ by
\begin{equation}\label{e:nck}
\mbox{\textrm P}(k)_{\textrm{nc}} = W_{\textrm{nc}}(w,v),
\end{equation}
where $w$ and $v$ are the canonical unitaries in $\cstar(\mathbb Z_k * \mathbb Z_2)$ that arise from the generators 
of the cyclic groups $\mathbb Z_k$ and $\mathbb Z_2$. Note that if $k\geq 4$, $\mbox{\textrm P}(k)_{\textrm{nc}}$ is not the noncommutative state space
of $\mbox{\textrm{NCP}}(k)$ because the state space of $\mbox{\textrm{NCP}}(k)$ is affinely homeomorphic to a compact subset of $\mathbb R^k$
rather than of $\mathbb R^3$.

The proof of Theorem \ref{ncp relations} is deferred to Section \S\ref{deferred proofs}. However, a much stronger conclusion holds for $k=3$,
as indicated by the following pairing of the theorems of Halmos and Mirman, which is proved in Section \S\ref{deferred proofs}.

\begin{theorem}[Halmos-Mirman]\label{halmos-mirman}
$\mbox{\textrm P}(3)^{\textrm{max}} = W_{\textrm{nc}}(w,v) \cong_{{\textrm{NCConv}}} S_{\textrm{nc}}(\mbox{\textrm{NCP}}(3))$.
\end{theorem}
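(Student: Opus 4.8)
The final isomorphism $W_{\rm nc}(w,v) \cong_{\sf NCConv} S_{\rm nc}(\mbox{\rm NCP}(3))$ is immediate from the categorical duality \eqref{e:cd}, since the operator subsystem $\mathcal O_{(w,v)}$ generated by $\{1,w,w^*,v,v^*\}$ coincides with $\mbox{\rm NCP}(3)=\mbox{\rm Span}\{1,w,w^2,v\}$ because $w^*=w^2$ and $v^*=v$. Moreover, the inclusion $W_{\rm nc}(w,v)\subseteq\mbox{\rm P}(3)^{\rm max}$ is the $k=3$ instance of Theorem \ref{ncp relations}. Thus the whole statement reduces to proving the reverse inclusion $\mbox{\rm P}(3)^{\rm max}\subseteq W_{\rm nc}(w,v)$ at every level $n\le\aleph_0$. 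To this end, fix a triple $(a,b,c)$ of selfadjoint operators on $\ell^2(n)$ in $\mbox{\rm P}(3)^{\rm max}_n$ and set $W=a+ib$ and $V=c$. Since $\mbox{\rm P}(3)=\mbox{\rm Conv}\,C_3\times[-1,1]$ is a Cartesian product, the containment of the joint numerical range of $(a,b,c)$ in $\mbox{\rm P}(3)$ decouples into two independent conditions: $\langle W\xi,\xi\rangle\in\mbox{\rm Conv}\,C_3$ for every unit vector $\xi$, and $\langle V\xi,\xi\rangle\in[-1,1]$ for every unit vector $\xi$. Hence $W$ has classical numerical range inside the triangle $T=\mbox{\rm Conv}\,C_3$ with vertices the cube roots of unity $1,\omega,\omega^2$, and $V$ is a selfadjoint contraction. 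The plan is to dilate $W$ and $V$ separately to representations of the two free factors, and then amalgamate these dilations over $\ell^2(n)$.

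First I would apply Mirman's theorem to $W$: because the numerical range of $W$ lies in the triangle $T$, which has nonempty interior, $W$ dilates to a normal operator $Y$ whose spectrum is contained in the vertex set $\{1,\omega,\omega^2\}$. As these three points are exactly the cube roots of unity, $Y$ is a unitary with $Y^3=1$, so $w\mapsto Y$ defines a $*$-representation of $\cstar(\mathbb Z_3)$, and compression to $\ell^2(n)$ yields a ucp map $\psi_1\colon\cstar(\mathbb Z_3)\to\B(\ell^2(n))$ with $\psi_1(w)=W$. Next I would apply Halmos's theorem to $V$: the selfadjoint contraction $V$ dilates to a symmetry $S$ (a selfadjoint unitary, $S^2=1$), so $v\mapsto S$ defines a $*$-representation of $\cstar(\mathbb Z_2)$, and compression yields a ucp map $\psi_2\colon\cstar(\mathbb Z_2)\to\B(\ell^2(n))$ with $\psi_2(v)=V$.

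It then remains to combine $\psi_1$ and $\psi_2$ into a single ucp map on $\cstar(\mathbb Z_3*\mathbb Z_2)\cong\cstar(\mathbb Z_3)*\cstar(\mathbb Z_2)$. For this I would invoke Boca's theorem on free products of unital completely positive maps: given ucp maps out of each free factor into a common $\B(\ell^2(n))$, there is a ucp map $\psi\colon\cstar(\mathbb Z_3)*\cstar(\mathbb Z_2)\to\B(\ell^2(n))$ restricting to $\psi_1$ and $\psi_2$ on the two factors. Restricting $\psi$ to $\mbox{\rm NCP}(3)$ produces a ucp map $\phi$ with $\phi(w)=W=a+ib$ and $\phi(v)=V=c$, which exhibits $(a,b,c)\in W_n(w,v)$. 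This gives $\mbox{\rm P}(3)^{\rm max}\subseteq W_{\rm nc}(w,v)$ and completes the argument.

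I expect the genuine obstacle to be the amalgamation step, namely the passage from two dilations living on a priori unrelated Hilbert spaces to one joint dilation on a common space over $\ell^2(n)$; this is precisely the \emph{pairing} of Halmos and Mirman that the title advertises, and it is what forces the free-product machinery rather than the two dilation theorems in isolation. A secondary point demanding care is the verification that Mirman's normal dilation is genuinely a unitary of order three, i.e.\ that having spectrum among the cube roots of unity forces $Y^3=1$; it is this algebraic feature, special to the triangle among all polygons, that lets $Y$ define a representation of $\cstar(\mathbb Z_3)$ and that explains why the argument does not extend to $\mbox{\rm P}(k)$ for $k\ge 4$.
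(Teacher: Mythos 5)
Your argument is correct, and its overall skeleton matches the paper's: establish $W_{\rm nc}(w,v)\subseteq\mbox{\rm P}(3)^{\rm max}$ via Theorem \ref{ncp relations}, use the Cartesian-product structure of $\mbox{\rm P}(3)$ to decouple the hypotheses, apply Mirman to the ``triangle'' coordinate and Halmos to the selfadjoint coordinate, and then amalgamate the two separate dilations into one joint dilation over $\ell^2(n)$. You also correctly identify the amalgamation as the real content, and your secondary observation --- that a normal operator with spectrum in the cube roots of unity satisfies $Y^3=1$, which is what makes the triangle special among polygons --- is exactly the point.

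Where you genuinely diverge from the paper is in how the amalgamation is carried out. You invoke Boca's free-product theorem for ucp maps: given ucp maps $\psi_1:\cstar(\mathbb Z_3)\rightarrow\B(\ell^2(n))$ and $\psi_2:\cstar(\mathbb Z_2)\rightarrow\B(\ell^2(n))$, there is a ucp map on $\cstar(\mathbb Z_3)*\cstar(\mathbb Z_2)$ restricting to each. This is a legitimate and clean route, but it imports a substantial external theorem. The paper instead isolates the amalgamation as a bespoke, elementary lemma (Theorem \ref{joint dilation}) and proves it by hand: first dilate $a$ to a unitary $y$ of order $k$ on a space $\K$ via an isometry $z$, then transport $b$ to $\K$ by setting $\tilde b=zbz^*$ (still a selfadjoint contraction), apply the explicit Halmos dilation to $\tilde b$ on $\K\oplus\K$ while extending $y$ by the identity, and compress back; the identities $z^*yz=a$ and $z^*(zbz^*)z=b$ then hand you the joint dilation directly from the universal property of the free product. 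What your approach buys is brevity and generality (Boca handles arbitrarily many free factors and arbitrary ucp maps at once); what the paper's approach buys is self-containedness and an explicit model of the joint dilation, which is in keeping with the constructive flavour of the Halmos and Mirman theorems it is pairing. Either proof is acceptable; if you use Boca's theorem you should cite it explicitly, since it is not among the paper's stated tools.
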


Theorem \ref{halmos-mirman} shows that any pair of operators $x,y\in\B(\H)$, for which the numerical range of $x$ is contained in the triangle  $\mbox{\textrm Conv}\,C_3$ and $y$ is 
a selfadjoint contraction, is the ucp image of the canonical unitary generators $w$ and $v$ of $\cstar(\mathbb Z_3*\mathbb Z_2)$.

%%%%%%%%
\subsection{C$^*$- and injective covers}

By the Choi-Effros Theorem \cite{choi--effros1977}, every abstract operator system $\osr$ can be realised as an operator subsystem of $\B(\H)$
for some Hilbert space $\H$. However, the C$^*$-algebras generated by different spatial realisations of a single operator system can be vastly different;
furthermore, among all such realisations, there are two that are extremal, and they are called the minimal and maximal C$^*$-covers of $\osr$.

Given an (abstract) operator system $\osr$, a \emph{C$^*$-cover} of $\osr$ is a pair $(\kappa, \A)$ consisting of a unital C$^*$-algebra $\A$
and a unital complete order embedding $\kappa:\osr\rightarrow\A$ such that $\A=\cstar\left(\kappa(\osr) \right)$.

A C$^*$-cover $(\iota,\B)$ is \emph{minimal} if, for every C$^*$-cover $(\kappa, \A)$ of $\osr$, there is a unital *-homomorphism
$\pi:\A\rightarrow\B$ such that $\iota=\pi\circ\kappa$. The minimal C$^*$-cover exists for every operator system $\osr$, and is unique
up to an isomorphism that fixes $\osr$ \cite{hamana1979b}. The minimal pair is denoted by $\left(\iota_{\textrm e},\cstare(\osr)\right)$,
and is known in the literature as the C$^*$-envelope of $\osr$.

A C$^*$-cover $(\nu,\mathcal C)$ is \emph{maximal} if, for every C$^*$-cover $(\kappa, \A)$ of $\osr$, there is a unital *-homomorphism
$\pi:\mathcal C\rightarrow\A$ such that $\kappa=\pi\circ\nu$. The maximal C$^*$-cover exists for every operator system $\osr$, and is unique
up to an isomorphism that fixes $\osr$ \cite{kirchberg--wassermann1998}. The maximal pair is denoted by 
$\left(\iota_{\textrm u},\cstar_{\textrm{max}}(\osr)\right)$,
and is known in the literature as the universal C$^*$-algebra generated by $\osr$.

The operator systems considered in this paper arise from the generators of discrete groups, and the following result describes their C$^*$-envelopes. 

\begin{theorem}\label{cstar-env} {\textrm (\cite{farenick--kavruk--paulsen--todorov2014})}
If $G$ is a discrete group generated by a finite set $\mathfrak u$ of elements of $G$, and if $\mathcal O_G$ denotes the operator subsystem of $\cstar(G)$ 
defined by
\begin{equation}\label{e:og}
\mathcal O_{G}=\mbox{\textrm Span}\{1,u,u^*\,|\,u\in\mathfrak u\}\subseteq \cstar(G),
\end{equation}
then $\cstare(\mathcal O_G)=\cstar(G)$.
\end{theorem}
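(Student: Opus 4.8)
The plan is to exhibit $(\iota,\cstar(G))$, with $\iota:\mathcal O_G\hookrightarrow\cstar(G)$ the inclusion, as a C$^*$-cover and then to prove it is the \emph{minimal} one by showing that its Shilov boundary ideal vanishes. That $(\iota,\cstar(G))$ is a C$^*$-cover is immediate: $\iota$ is a unital complete order embedding because $\mathcal O_G$ carries precisely the operator system structure inherited from $\cstar(G)$. Moreover, since $G=\langle\mathfrak u\rangle$, every element of $G$ is a word in $\mathfrak u$ and its inverses, so the corresponding canonical unitaries lie in the $*$-algebra generated by $\{u,u^*\,:\,u\in\mathfrak u\}$; as these unitaries span a dense subspace of $\cstar(G)$, we obtain $\cstar\bigl(\iota(\mathcal O_G)\bigr)=\cstar(G)$.

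The core of the argument is to show that every boundary ideal $J\subseteq\cstar(G)$ for $\mathcal O_G$ is zero; since $\cstare(\mathcal O_G)$ is the quotient of $\cstar(G)$ by the largest (Shilov) such ideal, this yields the claim. So let $J$ be an ideal for which the quotient $*$-homomorphism $q:\cstar(G)\to\cstar(G)/J$ restricts to a complete order isomorphism of $\mathcal O_G$ onto $q(\mathcal O_G)$. Fix a faithful unital representation $\cstar(G)\subseteq\B(\H)$. The inverse $\theta:=\bigl(q|_{\mathcal O_G}\bigr)^{-1}:q(\mathcal O_G)\to\mathcal O_G\subseteq\B(\H)$ is ucp, so by Arveson's extension theorem it extends to a ucp map $\Phi:\cstar(G)/J\to\B(\H)$.

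Now consider $\Psi:=\Phi\circ q:\cstar(G)\to\B(\H)$. This ucp map satisfies $\Psi(x)=\theta(q(x))=x$ for every $x\in\mathcal O_G$; in particular $\Psi(u)=u$ for each generator $u\in\mathfrak u$. Since each such $u$ is unitary, $\Psi(u^*u)=\Psi(1)=1=\Psi(u)^*\Psi(u)$ and likewise $\Psi(uu^*)=\Psi(u)\Psi(u)^*$, so $u$ and $u^*$ lie in the multiplicative domain of $\Psi$. The multiplicative domain is a C$^*$-subalgebra on which $\Psi$ is a $*$-homomorphism, and it contains $\{u,u^*\,:\,u\in\mathfrak u\}$, which generates $\cstar(G)$; hence $\Psi$ is a unital $*$-homomorphism agreeing with the inclusion on the generators, forcing $\Psi=\iota$ on all of $\cstar(G)$. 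Thus $\Phi\circ q=\iota$ is faithful, so $q$ is injective and $J=0$. Consequently the Shilov ideal is trivial and $\cstare(\mathcal O_G)=\cstar(G)$.

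The step I expect to carry the real weight is the passage from ``completely isometric on $\mathcal O_G$'' to ``injective,'' and the device that makes it work is the multiplicative domain: the unitarity of the generators, combined with the fact that any ucp self-extension is forced to fix them, compels that extension to be genuinely multiplicative on all of $\cstar(G)$. Everything else---the Arveson extension, the inherited operator system structure, and the identification of $\cstare(\mathcal O_G)$ with the quotient of $\cstar(G)$ by its Shilov ideal---is standard, so the conceptual content is entirely concentrated in this rigidity coming from unitary generators.
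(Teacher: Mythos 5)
Your proof is correct. The paper states this theorem only as a citation to \cite{farenick--kavruk--paulsen--todorov2014} without reproducing a proof, but your argument---show every boundary ideal is trivial by Arveson-extending the inverse of the quotient map and then using that unitarity of $\Psi(u)=u$ places each generator in the multiplicative domain of $\Psi$, forcing $\Psi$ to be the identity $*$-homomorphism---is precisely the standard proof from that reference, and the same rigidity device is exactly what the paper itself deploys in the proof of Proposition \ref{nce} to show every irreducible representation of $\cstar(G)$ is a boundary representation of $\mathcal O_G$.
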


An \emph{injective cover} of an operator system $\osr$ is a pair $(\kappa, \mathcal D)$ consisting of an injective C$^*$-algebra $\mathcal D$ and a unital
complete order embedding $\kappa:\osr\rightarrow\mathcal D$. Unlike C$^*$-covers, injective covers are not required to be generated by $\kappa(\osr)$.
An injective cover $(\kappa, \mathcal D)$ of $\osr$ is \emph{enveloping} if the only ucp map $\phi:\mathcal D\rightarrow\mathcal D$ for which $\kappa=\phi\circ \kappa$
is the identity map $\phi(a)=a$, for all $a\in\mathcal D$. The existence of an enveloping injective cover was established by Hamana \cite{hamana1979b}, and it is 
denoted by $(\iota_{\textrm ie},\mathcal I(\osr))$ and called the injective envelope of $\osr$. This enveloping injective cover also yields a minimal C$^*$-cover for $\osr$ via
the C$^*$-cover $\left(\iota_{\textrm ie}{}_{\vert\osr},\cstar\left(\iota_{\textrm ie}(\osr)\right)\right)$; that is, $\cstare(\osr)\cong\cstar\left(\iota_{\textrm ie}(\osr)\right)$. 

\begin{theorem}\label{inj-env} 
If $G$ is a discrete group generated by a finite set $\mathfrak u$ of elements of $G$ such that $\cstar(G)$ is primitive and antiliminal, 
then the injective envelope of the operator system 
defined by
\[
\mathcal O_{G}=\mbox{\textrm Span}\{1,u,u^*\,|\,u\in\mathfrak u\}\subseteq \cstar(G)
\]
is an AW$^*$-factor of type III.
\end{theorem}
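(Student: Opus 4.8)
The plan is to reduce everything to the injective envelope of the C$^*$-algebra $A:=\cstar(G)$ and then to extract the three features of an AW$^*$-factor of type III---monotone completeness, triviality of the centre, and absence of nonzero finite projections---in turn.

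First I would record the reduction. By Theorem~\ref{cstar-env} one has $\cstare(\mathcal O_G)=\cstar(G)=A$. Since the injective envelope of an operator system depends only on its C$^*$-envelope---the C$^*$-subalgebra generated by $\mathcal O_G$ inside $\mathcal I(\mathcal O_G)$ is, by rigidity, a copy of $A$, and $\mathcal I(\mathcal O_G)$ is then an injective, hence enveloping, extension of that copy---Hamana's uniqueness of the injective envelope gives $\mathcal I(\mathcal O_G)\cong\mathcal I(A)$. As $\mathcal I(A)$ is injective, Hamana's theorem guarantees it is monotone complete, and therefore an AW$^*$-algebra; moreover $A$ is separable because $G$ is countable, which will be needed for the structural input in the last step. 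It remains to show this AW$^*$-algebra is a factor of type III.

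Next I would deduce the factor property from primitivity. Since $A$ is primitive it admits a faithful irreducible representation $A\hookrightarrow\B(\H)$, where $\H$ is infinite dimensional (a finite-dimensional irreducible representation would make $A$ liminal). Realise the injective envelope as $\mathcal I(A)=\psi(\B(\H))$ for a minimal $A$-projection $\psi\colon\B(\H)\to\B(\H)$, the product being the Choi--Effros product $x\circ y=\psi(xy)$. Because $\psi$ is unital completely positive and fixes $A$ pointwise, $A$ lies in its multiplicative domain, so $\psi$ is an $A$-bimodule map: $\psi(axb)=a\psi(x)b$ for $a,b\in A$. If $z\in\mathcal I(A)$ is central for $\circ$, then for every $a\in A$ the bimodule identity gives $za=\psi(za)=\psi(az)=az$ in $\B(\H)$, so $z\in A'=\mathbb C1$ by irreducibility. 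Hence $Z(\mathcal I(A))=\mathbb C1$ and $\mathcal I(A)$ is an AW$^*$-factor.

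Finally---and this is where I expect the real work---I would rule out the semifinite types using antiliminality. Antiliminality of $A$ forces $\pi(A)\cap\mathcal K(\H)=\{0\}$ for the faithful irreducible $\pi$ above, so $A$ contains no nonzero compact operator and no nonzero liminal ideal. An AW$^*$-factor is of type III precisely when it has no nonzero finite projection, so it suffices to exclude both a nonzero abelian projection (type I) and a nonzero finite projection (type II). I would argue by contradiction: a nonzero finite projection $e\in\mathcal I(A)$ yields a finite monotone complete corner $e\mathcal I(A)e$, carrying a dimension or trace datum, and the essentiality (rigidity) of the extension $A\subseteq\mathcal I(A)$ should force this finite, respectively abelian, structure to be visible already inside $A$---producing a nonzero liminal ideal or a nonzero compact operator and contradicting antiliminality. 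The delicate point, and the main obstacle, is exactly this transfer of the abstract finiteness/trace data on a corner of $\mathcal I(A)$ back to concrete liminal or compact structure in $A$; here I would rely on Hamana's structural analysis of injective envelopes and regular monotone completions of separable C$^*$-algebras. Excluding both semifinite types in this way shows every nonzero projection of $\mathcal I(A)$ is infinite, so $\mathcal I(A)$ is of type III.
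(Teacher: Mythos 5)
Your overall architecture matches the paper's: reduce to $\mathcal I(\cstar(G))$ via $\cstare(\mathcal O_G)=\cstar(G)$ and the coincidence of injective envelopes, get an AW$^*$-algebra from injectivity, use primitivity for the factor property, and use antiliminality to pin down the type. Your centre computation via the multiplicative domain and $A'=\mathbb C 1$ is correct and is essentially how one proves factoriality in this setting. The problem is the final step, which you yourself flag as ``where I expect the real work'': you never actually exclude the semifinite types. The sketch --- a finite projection gives a finite corner with a trace datum, and essentiality ``should force'' this structure down into $A$ as a liminal ideal or a compact operator --- is not an argument; there is no general principle by which finiteness of a corner of $\mathcal I(A)$ transfers to concrete liminal or compact structure in $A$, and making this precise is exactly the hard content of the theorem. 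As written, the proposal proves only that $\mathcal I(\cstar(G))$ is an AW$^*$-factor, not that it is of type III.

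The paper closes this gap by citing a specific known result \cite[Corollary 2.4]{argerami--farenick2008}: the injective envelope of a separable prime C$^*$-algebra is an AW$^*$-factor of type I or type III. This eliminates type II outright (it rests on Hamana's theory of regular monotone completions, not on an elementary transfer argument). Type I is then excluded because a type I AW$^*$-factor is a von Neumann algebra, i.e.\ some $\B(\H)$, and in that case $\cstar(G)$ would contain a nonzero essential ideal of compact operators, contradicting antiliminality. If you want to complete your proof, you should either invoke that corollary or reproduce Hamana's analysis; the bimodule/irreducibility argument alone cannot get you past ``factor.''
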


\begin{proof} By hypothesis, $\cstar(G)$ is a separable C$^*$-algebra that has a faithful irreducible representation on a Hilbert space $\H$ such that $\cstar(G)$ contains no nonzero
compact operators. Because the injective envelope of a separable prime C$^*$-algebra is an AW$^*$-factor of type I or type III \cite[Corollary 2.4]{argerami--farenick2008}, and
since type I factors are von Neumann algebras, if the injective envelope of $\cstar(G)$ were a type I factor, then $\cstar(G)$ would contain an essential ideal consisting of nonzero
compact operators \cite{argerami--farenick2008}, contrary to the hypothesis that $\cstar(G)$ is antiliminal. Hence, $\mathcal I\left(\cstar(G)\right)$ is
a type III AW$^*$-factor. Because the injective envelopes of $\osr$ and $\cstare(\osr)$ coincide, for every operator system $\osr$, we deduce 
$\mathcal I\left(\mathcal O_G\right)$ is
a type III AW$^*$-factor.
\end{proof}

%%%%%%%%%%%
\subsection{Noncommutative extreme points, representations, and purity}

Suppose $K_{\textrm{nc}}$ is a noncommutative convex set in a separable dual operator space $\mathcal V$. 
An element $x$ is \emph{pure} in $K_{\textrm{nc}}$ if $x\in K_n$, for some $n$, and the only expressions
of $x$ of the form
\[
x=\sum_j \gamma_j^* x_j \gamma_j,
\]
where each $x_j\in K_{n_j}$ and $\{\gamma_j\}_j$ is a bounded family of operators $\ell^2(n)\rightarrow\ell^2(n_j)$
such that $\displaystyle\sum_j \gamma_j^*  \gamma_j = 1$ (the identity operator on $\ell^2(n)$),
are those expressions in which 
\begin{enumerate}
\item each $\gamma_j$ is a scalar multiple of an isometry $\nu_j$, 
\item $\nu_j^*x_j\nu_j=x$. 
\end{enumerate}

An element $x$ is \emph{maximal} in $K_{\textrm{nc}}$ if $x\in K_n$, for some $n$, and the only matrices of the form
\[
\left[ \begin{array}{cc} x & z_{12} \\ z_{21} & z_{22} \end{array}\right] 
\]
in $K_{\textrm{nc}}$ are those in which $z_{12}$ and $z_{21}$ are zero, and $z_{22}\in K_{\textrm{nc}}$.

A simple way to define noncommutative extreme points is to make use of the characterisation given in \cite[Proposition 6.1.4]{davidson--kennedy2019},
which is stated below as a definition.

\begin{definition} If $K_{\textrm{nc}}$ is a noncommutative convex set in a separable dual operator space $\mathcal V$, then an element $x\in K_{\textrm{nc}}$
is a \emph{noncommutative extreme point} if $x$ is both pure and maximal.
\end{definition}

The graded set of noncommutative extreme points of $K_{\textrm{nc}}$ is denoted by $\partial_{\textrm ext}\,K_{\textrm{nc}}$, and it satisfies the Krein-Milman theorem
in cases where $K_{\textrm{nc}}$ is compact (that is, each $K_n$ is compact):
namely, $K_{\textrm{nc}}$ is the smallest compact noncommutative convex set containing $\partial_{\textrm ext}\,K_{\textrm{nc}}$.

Endowed with the bounded-weak topology, the 
noncommutative state space of an operator system $\osr$ is an example of a compact noncommutative convex set. 
In this case, the noncommutative extreme points of $S_{\textrm{nc}}(\osr)$ are the pure, maximal ucp maps. However, the situation
is much simpler for operator systems of the form $\mathcal O_G$, as defined in (\ref{e:og}), as demonstrated by the following
result.

\begin{proposition}\label{nce} If $G$ is a discrete group generated by a finite set $\mathfrak u$ of elements of $G$, 
and if $\mathcal O_G$ denotes the operator subsystem of $\cstar(G)$ 
defined by
\[
\mathcal O_{G}=\mbox{\textrm Span}\{1,u,u^*\,|\,u\in\mathfrak u\}\subseteq \cstar(G),
\]
then a noncommutative state $\phi:\mathcal O_G\rightarrow \B(\ell^2(n))$ is a noncommutative extreme point of
$S_{\textrm{nc}}(\mathcal O_G)$ if and only if there exists an irreducible representation $\pi:\cstar(G)\rightarrow \B(\ell^2(n))$
such that $\pi_{\vert \mathcal O_G}=\phi$.
\end{proposition}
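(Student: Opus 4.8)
The plan is to route the argument through Arveson's boundary representations and the noncommutative Choquet theory of \cite{davidson--kennedy2019}. Recall from that theory that, for any operator system $\osr$, a noncommutative state $\phi\in S_{\rm nc}(\osr)$ is \emph{maximal} exactly when it has the unique extension property relative to $\cstare(\osr)$ --- equivalently, when its unique ucp extension $\hat\phi$ to $\cstare(\osr)$ is a $\ast$-representation --- since the block dilations $\left[\begin{smallmatrix} \phi & z_{12}\\ z_{21} & z_{22}\end{smallmatrix}\right]$ appearing in the definition of maximality are precisely the noncommutative states on $\osr$ that compress to $\phi$. Combined with \cite[Proposition 6.1.4]{davidson--kennedy2019}, a point $\phi$ is a noncommutative extreme point if and only if this extension $\hat\phi$ is moreover irreducible, so that the noncommutative extreme points of $S_{\rm nc}(\osr)$ are exactly the restrictions to $\osr$ of the boundary representations of $\cstare(\osr)$. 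Invoking Theorem \ref{cstar-env} to identify $\cstare(\mathcal O_G)=\cstar(G)$, the proposition becomes equivalent to the assertion that \emph{every} irreducible representation of $\cstar(G)$ is a boundary representation for $\mathcal O_G$.

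The heart of the argument, and the only place the special form of $\mathcal O_G$ enters, is a multiplicative-domain computation showing that every irreducible representation $\pi$ of $\cstar(G)$ has the unique extension property. Given any ucp map $\psi\colon\cstar(G)\to\B(\ell^2(n))$ with $\psi_{\vert\mathcal O_G}=\pi_{\vert\mathcal O_G}$, I would observe that for each generator $u\in\mathfrak u$ the element $\psi(u)=\pi(u)$ is a unitary, so that $\psi(u)^*\psi(u)=1=\psi(u^*u)$ and $\psi(u)\psi(u)^*=1=\psi(uu^*)$. Hence each $u$ (and each $u^*$) lies in the multiplicative domain of $\psi$. Since the multiplicative domain is a C$^*$-subalgebra on which $\psi$ restricts to a $\ast$-homomorphism, and since $\mathfrak u$ generates $\cstar(G)$, the multiplicative domain is all of $\cstar(G)$; thus $\psi$ is a $\ast$-homomorphism agreeing with $\pi$ on the generators, forcing $\psi=\pi$. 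This yields the unique extension property, and together with irreducibility it exhibits $\pi$ as a boundary representation.

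Assembling the two implications then finishes the proof: if $\phi$ is a noncommutative extreme point, its unique extension to $\cstar(G)$ is, by the first paragraph, an irreducible representation restricting to $\phi$; conversely, any irreducible $\pi$ is, by the second paragraph, a boundary representation, so $\pi_{\vert\mathcal O_G}$ is pure and maximal, hence a noncommutative extreme point. I expect the main obstacle to be bookkeeping rather than conceptual: one must apply the Davidson--Kennedy correspondence on the fixed space $\ell^2(n)$, so that both the restriction and the unique extension preserve the representation space, and verify that the \emph{purity} half of noncommutative extremality matches irreducibility of the extension. The multiplicative-domain step, by contrast, is robust and uses only that $\mathcal O_G$ is spanned by unitary generators; this is exactly why --- unlike for a general operator system, where the boundary representations form a proper subclass of the irreducible representations of the C$^*$-envelope --- here every irreducible representation qualifies.
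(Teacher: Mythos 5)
Your proposal is correct and follows essentially the same route as the paper: both reduce the statement, via the Davidson--Kennedy characterisation of noncommutative extreme points as restrictions of boundary representations and the identification $\cstare(\mathcal O_G)=\cstar(G)$, to showing that every irreducible representation of $\cstar(G)$ is a boundary representation for $\mathcal O_G$, and both establish this by the same multiplicative-domain argument using that the generators are mapped to unitaries. Your write-up merely spells out the Choi conditions $\psi(u)^*\psi(u)=\psi(u^*u)$ and $\psi(u)\psi(u)^*=\psi(uu^*)$ more explicitly than the paper does.
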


\begin{proof} Recall that a boundary representation of an operator system $\osr$ is an irreducible representation $\pi$ of $\cstare(\osr)$ 
such that the ucp map $\pi_{\vert\osr}$ has a unique ucp extension, namely $\pi$, to $\cstare(\osr)$.
By \cite[Corollary 6.2.1]{davidson--kennedy2019}, a ucp map
$\phi:\mathcal O_G\rightarrow \B(\ell^2(n))$ is a noncommutative extreme point of
$S_{\textrm{nc}}(\mathcal O_G)$ if and only if there exists a boundary representation 
$\pi:\cstar(G)\rightarrow \B(\ell^2(n))$ such that $\pi_{\vert \mathcal O_G}=\phi$. 
Since boundary representations are irreducible, it suffices to show that every 
irreducible representation $\pi:\cstar(G)\rightarrow \B(\ell^2(n))$ is a boundary representation of $\mathcal O_G$.

To this end, assume $\pi:\cstar(G)\rightarrow \B(\ell^2(n))$ is an irreducible representation, and let $\phi=\pi_{\vert \mathcal O_G}$. 
Suppose $\Phi:\cstar(G)\rightarrow \B(\ell^2(n))$ is a ucp extension of $\phi$. Then $\Phi(u)=\pi(u)$ is unitary, for every $u\in\mathfrak u$.
Thus, $u$ is in the multiplicative domain for $\Phi$, which implies $\Phi$ agrees with $\pi$ on the C$^*$-algebra generated by $\mathfrak u$.
In other words, $\Phi=\pi$, which implies $\phi$ has a unique ucp extension to the irreducible representation $\pi$ of $\cstar(G)$. Hence,
$\pi$ is a boundary representation of $\mathcal O_G$.
\end{proof}

A related form of purity exists for elements of the cone $\mathcal C \mathcal P(\osr,\ost)$ of all completely positive linear maps $\osr\rightarrow\ost$,
for operator systems $\osr$ and $\ost$. Specifically, $\phi\in \mathcal C \mathcal P(\osr,\ost)$ is \emph{pure}, if for all $\vartheta,\psi\in \mathcal C \mathcal P(\osr,\ost)$
such that $\vartheta+\psi=\phi$, there exists a scalar $\lambda\in[0,1]$ such that $\vartheta=\lambda\phi$ and $\psi=(1-\lambda)\phi$. This leads to the following
result.

\begin{corollary} If $\phi:\mathcal O_G\rightarrow\B(\ell^2(n))$ is a noncommutative extreme point of the noncommutative state space of $\mathcal O_G$,
then  $\phi$ is a pure element of the cone $\mathcal C \mathcal P(\mathcal O_G,\B(\ell^2(n)))$. Conversely, if 
$\psi:\mathcal O_G\rightarrow\B(\ell^2(n))$ is a noncommutative state such that $\psi$ is pure in the cone $\mathcal C \mathcal P(\mathcal O_G,\B(\ell^2(n)))$,
then $\psi$ has a dilation to a noncommutative extreme point of the noncommutative state space of $\mathcal O_G$.
\end{corollary}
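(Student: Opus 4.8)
The plan is to prove both implications using the Arveson extension theorem together with Arveson's Radon--Nikodym theorem for completely positive maps, the key classical fact being that a completely positive map on a C$^*$-algebra is pure in the CP cone if and only if the representation in its minimal Stinespring dilation is irreducible. Throughout I will use Proposition \ref{nce}, which identifies the noncommutative extreme points of $S_{\rm nc}(\mathcal O_G)$ with the restrictions to $\mathcal O_G$ of irreducible representations of $\cstar(G)$.

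For the forward direction, suppose $\phi$ is a noncommutative extreme point, so that $\phi=\pi_{\vert\mathcal O_G}$ for an irreducible representation $\pi$ of $\cstar(G)$ on $\ell^2(n)$. Given a decomposition $\phi=\vartheta+\psi$ with $\vartheta,\psi$ completely positive on $\mathcal O_G$, I would extend $\vartheta$ and $\psi$ to completely positive maps $\tilde\vartheta,\tilde\psi$ on $\cstar(G)$ via Arveson's extension theorem; then $\tilde\vartheta+\tilde\psi$ is a ucp extension of $\phi$. Because each generator $u\in\mathfrak u$ maps under $\phi$ to a unitary and hence lies in the multiplicative domain of any ucp extension, $\phi$ has $\pi$ as its unique ucp extension (this is exactly the boundary-representation argument in the proof of Proposition \ref{nce}), forcing $\tilde\vartheta+\tilde\psi=\pi$ and therefore $0\le\tilde\vartheta\le\pi$. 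Since $\pi$ is irreducible, $\pi(\cstar(G))'=\mathbb C 1$, and Arveson's Radon--Nikodym theorem gives $\tilde\vartheta=\lambda\pi$ for some $\lambda\in[0,1]$; restricting to $\mathcal O_G$ yields $\vartheta=\lambda\phi$ and $\psi=(1-\lambda)\phi$, so $\phi$ is pure in $\mathcal{CP}(\mathcal O_G,\B(\ell^2(n)))$.

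For the converse, let $\psi$ be a noncommutative state that is pure in the CP cone. I would first pass to a good extension: the set of ucp extensions of $\psi$ to $\cstar(G)$ is nonempty (Arveson), convex, and compact in the bounded-weak topology, so it has an extreme point $\tilde\psi$. The main step is to show $\tilde\psi$ is itself pure in $\mathcal{CP}(\cstar(G),\B(\ell^2(n)))$: if $\tilde\psi=\tilde\vartheta+\tilde\psi'$ is a CP decomposition, restricting to $\mathcal O_G$ and invoking purity of $\psi$ produces $\lambda\in[0,1]$ with $\tilde\vartheta_{\vert\mathcal O_G}=\lambda\psi$ and $\tilde\psi'_{\vert\mathcal O_G}=(1-\lambda)\psi$; for $\lambda\in(0,1)$ the normalised pieces are ucp extensions of $\psi$ whose convex combination is $\tilde\psi$, so extremality forces $\tilde\vartheta=\lambda\tilde\psi$, while the cases $\lambda\in\{0,1\}$ follow since a CP map annihilating the unit is zero. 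Thus $\tilde\psi$ is pure on $\cstar(G)$, so the representation $\rho$ in its minimal Stinespring dilation $\tilde\psi(a)=V^*\rho(a)V$ is irreducible, with $V$ an isometry because $\tilde\psi$ is unital. Setting $\Psi=\rho_{\vert\mathcal O_G}$, Proposition \ref{nce} shows $\Psi$ is a noncommutative extreme point, and $\psi(x)=V^*\Psi(x)V$ for $x\in\mathcal O_G$ exhibits $\psi$ as a compression of $\Psi$; that is, $\psi$ dilates to the noncommutative extreme point $\Psi$, as required.

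The principal obstacle is the converse direction, and specifically the transfer of purity from $\mathcal O_G$ up to $\cstar(G)$: purity in the CP cone over the operator system is strictly weaker than purity over the whole C$^*$-algebra, so one cannot simply extend $\psi$ arbitrarily and expect an irreducible Stinespring representation. The device of selecting an \emph{extreme} ucp extension and then reflecting the operator-system purity back through the restriction map is what bridges this gap; verifying that this extremal extension is genuinely CP-pure (including the degenerate $\lambda\in\{0,1\}$ cases) is the crux, after which the classical Stinespring/Radon--Nikodym machinery and Proposition \ref{nce} finish the argument.
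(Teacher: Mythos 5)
Your proof is correct and follows essentially the same route as the paper: the forward direction rests on the fact that restrictions of irreducible representations of $\cstar(G)$ are pure in the CP cone (which the paper simply cites from Farenick--Tessier, Proposition 2.12, and which you prove directly via the unique-ucp-extension/multiplicative-domain argument combined with Arveson's Radon--Nikodym theorem and irreducibility of $\pi$), while the converse rests on producing a pure ucp extension of $\psi$ to $\cstar(G)$ and then dilating it to an irreducible representation (which the paper cites from Arveson and which you reprove by selecting an extreme point of the BW-compact convex set of ucp extensions and transferring purity through the restriction map). The only real difference is that you supply self-contained arguments for the two facts the paper imports by citation; these arguments are sound, including your handling of the degenerate cases $\lambda\in\{0,1\}$ via the observation that a completely positive map vanishing at the unit is zero.
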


\begin{proof} By Proposition \ref{nce}, $\phi=\pi_{\vert \mathcal O_G}$, for some boundary representation 
$\pi:\cstar(G)\rightarrow  \B(\ell^2(n))$; therefore, $\pi_{\vert\mathcal O_G}$ is a pure element of 
$\mathcal C \mathcal P(\mathcal O_G,\B(\ell^2(n)))$ \cite[Proposition 2.12]{farenick--tessier2022}.

Suppose now that 
$\psi:\mathcal O_G\rightarrow\B(\ell^2(n))$ is a noncommutative state such that $\psi$ is pure in the cone $\mathcal C \mathcal P(\mathcal O_G,\B(\ell^2(n)))$.
Because $\psi$ is pure, there exists a pure ucp extension
$\Psi:\cstar(G)\rightarrow\B(\ell^2(n))$ of $\psi$ \cite[p.~180]{arveson1969}. 
As $\Psi$ is pure, it has a dilation to an irreducible representation $\pi$ of
$\cstar(G)$ on a Hilbert space $\H_\pi$ \cite[Corollary 1.4.3]{arveson1969}. 
The restriction of $\pi$ to $\mathcal O_G$ is a noncommutative extreme point and a dilation of $\psi$; hence,
$\psi$ has a dilation to a noncommutative extreme point of the noncommutative state space of $\mathcal O_G$.
\end{proof} 

The canonical embeddings of $\mbox{\textrm{NC}}(d)$ and $\mbox{\textrm{NCP}}(k)$ into their C$^*$-envelopes are pure for $k,d\geq 2$ 
\cite[Theorem 4.3]{farenick--tessier2022}, and their canonical embeddings into their injective envelopes are pure for  $k,d\geq3$
\cite[Theorem 3.2]{farenick--tessier2022} because 
$\cstar(*_1^d\mathbb Z_2)$ and $\cstar(\mathbb Z_k*\mathbb Z_2)$ are primitive \cite[Corollary 3.4]{bedos_omland2011}.

%%%%%%%%%%%%%%%%%%%%%%%%%%%%%%%%%%%%%%%%%%%%%%%%%%%
\section{Representations of the Noncommutative Cube}
%%%%%%%%%%%%%%%%%%%%%%%%%%%%%%%%%%%%%%%%%%%%%%%%%%%

The operator system $\mbox{\textrm{NC}}(d)$ and the noncommutative convex set 
$[-1,1]^d_{\textrm{nc}}$ are dual objects in the sense that
\[
[-1,1]^d_{\textrm{nc}} \cong_{{\textrm{NCConv}}} S_{\textrm{nc}}\left( \mbox{\textrm{NC}}(d)\right) \mbox{ and }
A_{\textrm{nc}}\left([-1,1]^d_{\textrm{nc}}\right)\cong_{{\textrm{OpSys}}}   \mbox{\textrm{NC}}(d) .
\]
We also have, from Theorem \ref{free d-cubes}, that
\[
\left([-1,1]^d\right)^{\textrm{max}}=[-1,1]^d_{\textrm{nc}} ,
\]
for very $d\geq 2$. However,
as $d$ moves from $d=2$ to $d\geq3$, there are some notable differences in properties of the noncommutative 
cubes $\mbox{\textrm{NC}}(d)$, as observed in 
\cite{farenick--kavruk--paulsen--todorov2014,farenick--kavruk--paulsen--todorov2018}. 
For these reasons, the case of the noncommutative square is
discussed here first and separately from the higher-order noncommutative
$d$-cubes.

%%%%%%%%%%%%%%%%%%
\subsection{The noncommutative square}

The analysis of the noncommutative square involves the group $\mathbb Z_2 * \mathbb Z_2$ and the group
C$^*$-algebra $\cstar(\mathbb Z_2 * \mathbb Z_2)$, both of which have been well studied. 
While the main assertion concerning $\cstar(\mathbb Z_2 * \mathbb Z_2)$, Corollary \ref{d=2} below, can be derived from the results of \cite{raeburn--sinclair1989},
the linear-algebraic proof given here has the value of simplicity, and it facilitates the determination of the enveloping injective cover of $\mbox{\textrm{NC}}(2)$.

\begin{theorem}\label{thm d=2}
Assume $v_1,v_2\in\B(\H)$ are linearly independent symmetries.  
	If $\{v_1,v_2\}$ is irreducible, then $\mbox{\textrm dim}\H=2$.
\end{theorem}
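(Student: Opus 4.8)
The plan is to use irreducibility to force a single algebraic relation between the two symmetries, collapse the algebra they generate to a finite-dimensional one, and then read off $\dim\H$ from the representation theory of finite-dimensional $\cstar$-algebras. First I would form the anticommutator $a=v_1v_2+v_2v_1$. Since $v_1^*=v_1$ and $v_2^*=v_2$, a direct check gives $a^*=a$, and an equally direct computation (using $v_1^2=v_2^2=1$) shows that $a$ commutes with both $v_1$ and $v_2$. As $\{v_1,v_2\}$ is irreducible, the commutant of $\{v_1,v_2\}$ consists only of scalars, so $a=\lambda 1$ for some $\lambda\in\mathbb R$. This produces the key relation $v_2v_1=\lambda 1-v_1v_2$.

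Next I would use this relation, together with $v_1^2=v_2^2=1$, to reduce every word in $v_1,v_2$ to the linear span of $\{1,v_1,v_2,v_1v_2\}$; for instance $v_1v_2v_1=\lambda v_1-v_2$ and $v_1v_2v_1v_2=\lambda v_1v_2-1$. Consequently the unital $*$-algebra $\A$ generated by $v_1,v_2$ (it is $*$-closed since the $v_j$ are selfadjoint) satisfies $\dim\A\leq 4$, and being a finite-dimensional $*$-subalgebra of $\B(\H)$ it is a $\cstar$-algebra. Because $\A$ is generated by $v_1,v_2$, its commutant coincides with that of $\{v_1,v_2\}$, so $\A$ acts irreducibly on $\H$. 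Invoking the structure of finite-dimensional $\cstar$-algebras, $\A\cong\bigoplus_i\mathcal M_{n_i}$, and an irreducible action forces a single summand of multiplicity one; hence $\A\cong\mn$ acting standardly on $\H\cong\mathbb C^n$ with $n^2=\dim\A\leq 4$. Thus $n\in\{1,2\}$, and the case $n=1$ would make $v_1,v_2$ scalars, contradicting linear independence, so $n=2$ and $\dim\H=2$.

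The main obstacle, and the step I would justify most carefully, is the passage from ``$\A$ finite-dimensional and irreducible'' to ``$\H\cong\mathbb C^n$ with $n^2=\dim\A$'': one must confirm that topological irreducibility of $\A$ on $\H$ already forces $\H$ to be finite-dimensional (so that, equivalently, Burnside's theorem applies once finite-dimensionality is secured), and that the commutant of $\A$ really does agree with that of the generating pair. As a clarifying remark, the borderline values $\lambda=\pm 2$ never occur under the hypotheses: they would force $(v_1v_2\mp 1)^*(v_1v_2\mp 1)=0$, hence $v_1v_2=\pm 1$ and $v_2=\pm v_1$, again contradicting linear independence; so in fact $\lambda\in(-2,2)$, consistent with $\A\cong\mn$ for $n=2$.
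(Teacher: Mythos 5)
Your argument is correct, but it proceeds quite differently from the paper. The paper diagonalises $v_1$, writes $v_2$ as a $2\times2$ operator matrix over the eigenspaces $\K_{-1}\oplus\K_1$, and uses the trivial-commutant hypothesis twice to force $v_2$ into an explicit form $\begin{bmatrix}\lambda & \mu\\ \bar\mu&-\lambda\end{bmatrix}$ with scalar entries, from which irreducibility forces $\dim\K=1$. You instead observe that the anticommutator $v_1v_2+v_2v_1$ is a selfadjoint element of the commutant, hence a scalar $\lambda$, and use the resulting relation to bound the $*$-algebra $\A=\mathrm{span}\{1,v_1,v_2,v_1v_2\}$ by dimension $4$. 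The step you rightly flag as needing care closes cleanly: a finite-dimensional subspace of $\B(\H)$ is closed in the weak operator topology, so $\A$ is a von Neumann algebra and $\A=\A''$; since the commutant of $\A$ coincides with that of its generators, which is $\mathbb C 1$ by irreducibility, the double commutant theorem gives $\A=\B(\H)$, whence $(\dim\H)^2\leq 4$ and linear independence rules out $\dim\H=1$ (there is no need to invoke Burnside separately). Your route is shorter and more conceptual; what it does not deliver, and what the paper's explicit matrix computation buys, is the concrete normal form of the irreducible pair $(v_1,v_2)$ in terms of the parameter $\lambda$, which the paper then uses directly to classify the irreducible representations of $\cstar(\mathbb Z_2*\mathbb Z_2)$ (Corollary 3.2) and to identify that C$^*$-algebra with an algebra of $\M_2$-valued functions (Corollary 3.5). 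For the theorem as stated, your proof is complete once the double commutant step is written out; your closing remark that $\lambda\in(-2,2)$ is a correct and pleasant consistency check, matching the paper's parameter $\lambda\in(-1,1)$ after rescaling by $2$.
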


\begin{proof} Decompose $\H$ as $\K_{-1}\oplus\K_{1}$, 
	where $\K_j$ is the eigenspace of $v_j$ corresponding to the eigenvalue $j\in\{-1,1\}$. Thus, $v_1$ and
	$v_2$ have the following representations as $2\times 2$ operator matrices acting on $\H=\K_{-1}\oplus\K_{1}$:
	\[
	v_1=\left[ \begin{array}{cc} -1_{\K_{-1}} &0 \\ 0 & 1_{\K_1} \end{array}\right]
	\,\mbox{ and }\,
	v_2=\left[ \begin{array}{cc} a_{-1}&y \\ y^* & a_{1} \end{array}\right],
	\]
	for some selfadjoint $a_j\in\B(\K_j)$ and bounded linear operator $y:\K_{1}\rightarrow\K_{-1}$.
	We now determine the elements $a_j$ and $y$, using the hypothesis that $\{v_1,v_2\}$ is an irreducible set.
	
	Every matrix that commutes with $v_1$ has the form $x_{-1}\oplus x_{1}$, for $x_j\in\B(\K_j)$.
	Because $v_2$ is a symmetry, $ya_{1}=-a_{-1}y$. Hence, the diagonal matrix 
	$-a_{-1}\oplus a_{1}$
	commutes with both $v_1$ and $v_2$, and so there exists
	$\lambda\in\mathbb R$ such that $a_{-1}=\lambda1_{\K_{-1}}$ and
	$a_1=-\lambda1_{\K_1}$. The diagonal entries of the matrix 
	$v_2^2=1_\H$ lead to the equations $\lambda^2 1_{\K_{-1}}+yy^*=1_{\K_{-1}}$ and
	$\lambda^2 1_{\K_{1}}+y^*y=1_{\K_{1}}$. Thus, $-1\leq\lambda\leq1$. However, $|\lambda|\not=1$, 
	for if it were true that $|\lambda|=1$, then we would necessarily have $y=0$, implying $v_2=\lambda v_1$,
	which is in contradiction to the linear independence of $v_1$ and $v_2$. Hence, $\lambda\in(-1,1)$.
	
	Since $yy^*=(1-\lambda^2)\,1_{\K_{-1}}$ and $y^*y=(1-\lambda^2)\,1_{\K_{1}}$, the element $z=\frac{1}{\sqrt{1-\lambda^2}}y$ 
	is a linear operator $z:\K_{1}\rightarrow\K_{{-1}}$ such that $zz^*=1_{\K_{-1}}$ and $z^*z=1_{\K_{1}}$; that is, $z$ is both an isometry
	and a co-isometry. Hence, $z:\K_{1}\rightarrow\K_{-1}$ is a surjective isometry, and we can assume, henceforth
	and without loss of generality, that $\K_{-1}=\K_1$, which we denote by $\K$. Thus, $v_2$ is given by
	\[
	v_2=\left[\begin{array}{cc} \lambda 1_\K & y \\ y^* & -\lambda 1_\K\end{array}\right],
	\]
	where $yy^*=y^*y=(1-\lambda)^21_\K$.
	
	The element $g=y\oplus y\in\B(\H)$ commutes with both $v_1$ and $v_2$,
	and so $y=\mu 1_\K$, for some $\mu\in\mathbb C$ such that $|\mu|^2=1-\lambda^2$. 
	Thus,
	\[
	v_2=\left[\begin{array}{cc} \lambda 1_\K & e^{i\theta}\sqrt{1-\lambda^2} 1_\K \\ e^{-i\theta}\sqrt{1-\lambda^2}1_\K & -\lambda 1_\K\end{array}\right],
	\]
	for some $\theta\in\mathbb R$.
As $v_1$ and $v_2$ are jointly unitarily equivalent to
	\[
	\bigoplus_1^{\mbox{\textrm dim}\,\K}\left[ \begin{array}{cc}-1 &0 \\ 0 & 1\end{array}\right]
	\,\mbox{ and }\,
	\bigoplus_1^{\mbox{\textrm dim}\,\K}
	\left[ \begin{array}{cc}\lambda &e^{i\theta}\sqrt{1-\lambda^2} \\ e^{-i\theta}\sqrt{1-\lambda^2} & -\lambda\end{array}\right],
	\]
	the only way for $\{v_1,v_2\}$ to be an irreducible set is for $\mbox{\textrm dim}\,\K=1$.
	Hence, $\H=\K\oplus\K$ has dimension $2$.
\end{proof}

\begin{corollary}\label{factorial_square} The irreducible representations of $\cstar(\mathbb Z_2 * \mathbb Z_2)$ are, up to unitary equivalence,
	of the form $\pi_\lambda:\cstar(\mathbb Z_2 * \mathbb Z_2)\rightarrow\M_2(\mathbb C)$, where $\lambda\in(-1,1)$, and 
	\[
	\pi_\lambda(u_1)=\left[ \begin{array}{cc}-1 &0 \\ 0 & 1\end{array}\right]\,\mbox{ and }\,
	\pi_\lambda(u_2)=\left[ \begin{array}{cc}\lambda &\sqrt{1-\lambda^2} \\ \sqrt{1-\lambda^2} & -\lambda\end{array}\right],
	\]
	or of the form $\varrho_{k}:\cstar(\mathbb Z_2 * \mathbb Z_2)\rightarrow\mathbb C$, for $k\in\{-1,1\}$, where 
	$\varrho_{-1}(u_j)=j$ and $\varrho_1(u_j)=-j$, for $j\in\{-1,1\}$.
\end{corollary}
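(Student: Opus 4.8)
The plan is to leverage Theorem~\ref{thm d=2} directly, which already classifies the irreducible sets $\{v_1,v_2\}$ of linearly independent symmetries up to joint unitary equivalence. Since $\cstar(\mathbb Z_2*\mathbb Z_2)$ is the universal C$^*$-algebra for two symmetries, an irreducible representation $\pi$ is precisely an irreducible pair $(\pi(u_1),\pi(u_2))$ of symmetries on some Hilbert space $\H$. There are two cases to separate: whether $\pi(u_1)$ and $\pi(u_2)$ are linearly independent or not.

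First I would dispose of the degenerate case. If $\pi(u_1)$ and $\pi(u_2)$ are linearly dependent, then since both are symmetries we must have $\pi(u_2)=\pm\pi(u_1)$. Irreducibility of a single symmetry forces $\H$ to be one-dimensional, so $\pi(u_1)\in\{-1,1\}$ and $\pi(u_2)=\pm\pi(u_1)$. Recording the four sign combinations gives exactly the one-dimensional representations: writing $\varrho_{-1}(u_j)=j$ and $\varrho_1(u_j)=-j$ for $j\in\{-1,1\}$ captures these (noting that $\pi(u_2)=-\pi(u_1)$ and $\pi(u_2)=\pi(u_1)$ each yield two characters indexed by the value of $\pi(u_1)$). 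I would check that the labelling in the statement matches the actual scalar values obtained here.

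Next I would handle the main case of linearly independent symmetries. Here Theorem~\ref{thm d=2} gives $\dim\H=2$, and its proof shows that $(\pi(u_1),\pi(u_2))$ is jointly unitarily equivalent to the explicit $2\times 2$ pair with parameter $\lambda\in(-1,1)$ and a phase $e^{i\theta}$. The remaining task is to absorb the phase: conjugating by the diagonal unitary $\mathrm{diag}(e^{i\theta/2},e^{-i\theta/2})$ (or a suitable choice) leaves $\pi_\lambda(u_1)$ fixed and rotates the off-diagonal entry $e^{i\theta}\sqrt{1-\lambda^2}$ to the real value $\sqrt{1-\lambda^2}$, yielding exactly the normal form $\pi_\lambda$ displayed in the statement. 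I would verify the diagonal conjugation explicitly since it is what removes $\theta$ and produces the canonical real representative.

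The one genuine obstacle is showing the list is \emph{exhaustive and irredundant} as stated: exhaustiveness follows from the two-case split above together with universality, but I should confirm that distinct $\lambda\in(-1,1)$ give inequivalent representations (otherwise the parametrisation overcounts). This is a short trace computation: $\t\bigl(\pi_\lambda(u_1)\pi_\lambda(u_2)\bigr)=-2\lambda$ is a unitary invariant, so $\pi_\lambda\cong\pi_{\lambda'}$ forces $\lambda=\lambda'$. I would include this remark to justify that $\lambda$ is a faithful parameter, and I expect the phase-absorption step and this invariant computation to be the only points requiring care, the rest being immediate consequences of Theorem~\ref{thm d=2}.
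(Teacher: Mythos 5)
Your argument is correct and follows essentially the same route as the paper: the two\-/dimensional case is Theorem~\ref{thm d=2} together with absorption of the phase $e^{i\theta}$ by a diagonal unitary commuting with $\mathrm{diag}(-1,1)$, and the irredundancy of $\lambda$ is settled either by your trace invariant $\t\bigl(\pi_\lambda(u_1)\pi_\lambda(u_2)\bigr)=-2\lambda$ or, as the paper does, by noting that any intertwiner must commute with $\mathrm{diag}(-1,1)$, hence be diagonal, hence preserve the diagonal entries $\pm\lambda$. One place where your extra care is not idle: the linearly dependent case yields all four characters of $\mathbb Z_2*\mathbb Z_2$, namely the sign patterns $(\pm1,\pm1)$ on $(u_1,u_2)$, whereas the corollary as printed lists only the two representations $\varrho_{\pm1}$; the characters sending both generators to $1$, and both to $-1$, are also irreducible and belong on the list, so your intention to check the labelling against the values actually obtained would catch a genuine omission in the statement.
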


\begin{proof} By Theorem 3.2 and
the universal property of $\cstar(\mathbb Z_2* \mathbb Z_2)$, any pair of linear independent symmetries acting irreducibly on a Hilbert space will do so
on a space of dimension 2 and give rise to an irreducible representation $\cstar(\mathbb Z_2* \mathbb Z_2)\rightarrow\M_2(\mathbb C)$. One such symmetry
will be unitarily equivalent to $\left[ \begin{array}{cc}1 &0 \\ 0 & -1\end{array}\right]$, while the other has the form 
$\left[ \begin{array}{cc}\lambda &e^{i\theta}\sqrt{1-\lambda^2} \\ e^{-i\theta}\sqrt{1-\lambda^2} & -\lambda\end{array}\right]$ for some 
$\lambda\in(-1,1)$ and $\theta\in\mathbb R$. This pair, however, is jointly 
unitarily equivalent to
	\[
	\left[ \begin{array}{cc}1 &0 \\ 0 & -1\end{array}\right]\,\mbox{ and }\,
	\left[ \begin{array}{cc}\lambda &\sqrt{1-\lambda^2} \\ \sqrt{1-\lambda^2} & -\lambda\end{array}\right],
	\]
which is independent of $\theta$. Thus, the elements $\lambda\in(0,1)$ determine all the $2$-dimensional irreducible representations of
$\cstar(\mathbb Z_2* \mathbb Z_2)$ up to unitary equivalence. Moreover, this determination is unique, as
any unitary that fixes 
	$\left[ \begin{array}{cc} -1 &0 \\ 0 & 1\end{array}\right]$ must be diagonal, and so such a unitary equivalence cannot send 
	$\left[ \begin{array}{cc}\lambda &\sqrt{1-\lambda^2} \\ \sqrt{1-\lambda^2} & -\lambda\end{array}\right]$ to  
	$\left[ \begin{array}{cc}\tilde\lambda &\sqrt{1-\tilde\lambda^2} \\ \sqrt{1-\tilde\lambda^2} & -\tilde\lambda\end{array}\right]$
	unless $\tilde\lambda=\lambda$.
\end{proof} 

\begin{corollary} The noncommutative extreme points of the noncommutative square $[-1,1]^2_{\textrm{nc}}$
occur at levels $1$ and $2$ of the grading.
\end{corollary}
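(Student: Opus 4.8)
The plan is to transfer the problem, via the categorical duality of Theorem \ref{ncd relations}, to the noncommutative state space $S_{\rm nc}(\mbox{\rm NC}(2))$, where the representation-theoretic characterisation of noncommutative extreme points can be brought to bear. By Theorem \ref{ncd relations}, $[-1,1]^2_{\rm nc}=W_{\rm nc}(\mathfrak u)\cong_{{\sf NCConv}} S_{\rm nc}(\mbox{\rm NC}(2))$, and this isomorphism is implemented by the graded map sending $\phi\in S_n(\mbox{\rm NC}(2))$ to the pair $(\phi(u_1),\phi(u_2))\in W_n(\mathfrak u)$. First I would note that this is a grade-preserving bijection in the category ${\sf NCConv}$, so it carries the defining convex expressions of one noncommutative convex set to those of the other; consequently it preserves both purity and maximality, and hence sends noncommutative extreme points at level $n$ to noncommutative extreme points at level $n$. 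It therefore suffices to identify the levels at which the noncommutative extreme points of $S_{\rm nc}(\mbox{\rm NC}(2))$ occur.

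Next I would apply Proposition \ref{nce} with the group $G=\mathbb Z_2*\mathbb Z_2$ and $\mathcal O_G=\mbox{\rm NC}(2)$. According to that result, a noncommutative state $\phi:\mbox{\rm NC}(2)\rightarrow\B(\ell^2(n))$ is a noncommutative extreme point of $S_{\rm nc}(\mbox{\rm NC}(2))$ precisely when $\phi=\pi_{\vert \mbox{\rm NC}(2)}$ for some irreducible representation $\pi:\cstar(\mathbb Z_2*\mathbb Z_2)\rightarrow\B(\ell^2(n))$. Thus the level $n$ at which a noncommutative extreme point can occur is exactly the dimension of the Hilbert space on which some irreducible representation of $\cstar(\mathbb Z_2*\mathbb Z_2)$ acts.

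The final step is to invoke Corollary \ref{factorial_square}, which classifies the irreducible representations of $\cstar(\mathbb Z_2*\mathbb Z_2)$ up to unitary equivalence as either the one-dimensional representations $\varrho_k$, for $k\in\{-1,1\}$, or the two-dimensional representations $\pi_\lambda$, for $\lambda\in(-1,1)$. Since irreducible representations therefore act only on Hilbert spaces of dimension $1$ or $2$, the characterisation above forces the noncommutative extreme points of $S_{\rm nc}(\mbox{\rm NC}(2))$, and hence of $[-1,1]^2_{\rm nc}$, to occur only at levels $1$ and $2$. I would close by remarking that both levels are genuinely populated: the representations $\varrho_k$ supply extreme points at level $1$, and the representations $\pi_\lambda$ supply extreme points at level $2$. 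The only point requiring care, rather than an obstacle of substance, is the verification that the duality isomorphism of Theorem \ref{ncd relations} is grade-preserving and respects the noncommutative convex structure so that it transports noncommutative extreme points faithfully; once this is recorded, the conclusion follows immediately from Proposition \ref{nce} and Corollary \ref{factorial_square}.
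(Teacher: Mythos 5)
Your proposal is correct and follows exactly the route the paper intends: the corollary is stated without proof as an immediate consequence of Proposition \ref{nce} (noncommutative extreme points are restrictions of irreducible representations) combined with Corollary \ref{factorial_square} (the irreducible representations of $\cstar(\mathbb Z_2 * \mathbb Z_2)$ act only in dimensions $1$ and $2$), with the duality of Theorem \ref{ncd relations} transporting the conclusion to $[-1,1]^2_{\rm nc}$. Your extra care in checking that the duality isomorphism is grade-preserving and respects the noncommutative convex structure is a reasonable addition, but the argument is essentially the paper's own.
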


\begin{corollary}\label{bd} If $\{\lambda_n\}_{n\in\mathbb N}$ is a countable dense subset of $(-1,1]$ such that $\lambda_1=1$, then the block-diagonal operators
	\[
	\tilde u_1=\bigoplus_1^\infty \left[ \begin{array}{cc}-1 &0 \\ 0 & 1\end{array}\right] \,\mbox{ and }\,
	\tilde u_2 =\bigoplus_{n=1}^\infty \left[ \begin{array}{cc}\lambda_n &\sqrt{1-\lambda_n^2} \\ \sqrt{1-\lambda_n^2} & -\lambda_n\end{array}\right]
	\]
	are universal free symmetries.
\end{corollary}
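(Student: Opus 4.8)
I read the conclusion that $\tilde u_1,\tilde u_2$ are \emph{universal free symmetries} as the assertion that the unital $*$-homomorphism $\pi\colon\cstar(\mathbb Z_2*\mathbb Z_2)\to\B(\H)$, where $\H=\bigoplus_{n=1}^\infty\mathbb C^2$, determined on the canonical generators by $u_1\mapsto\tilde u_1$ and $u_2\mapsto\tilde u_2$ is faithful; equivalently, $\{\tilde u_1,\tilde u_2\}$ generates a copy of $\cstar(\mathbb Z_2*\mathbb Z_2)$, so that by Theorem \ref{cstar-env} the span of $\{1,\tilde u_1,\tilde u_2\}$ is a faithful realisation of $\mbox{\rm NC}(2)$ carrying its universal property for pairs of selfadjoint contractions. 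The plan is therefore to prove that $\pi$ is isometric. First I would note that each $\tilde u_j$, being a block diagonal of symmetries, is itself a symmetry, which is precisely what makes $\pi$ well defined by the universal property of the group C$^*$-algebra. The $n$-th blocks of $\tilde u_1$ and $\tilde u_2$ are exactly $\pi_{\lambda_n}(u_1)$ and $\pi_{\lambda_n}(u_2)$ from Corollary \ref{factorial_square}, so $\pi=\bigoplus_n\pi_{\lambda_n}$ with $\pi(u_j)=\tilde u_j$, where the summand $\pi_{\lambda_1}=\pi_1$ is the (reducible) boundary representation and the remaining $\pi_{\lambda_n}$ are the irreducible representations of that corollary.

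The key computation is to evaluate operator norms through the irreducible representations. For $a\in\cstar(\mathbb Z_2*\mathbb Z_2)$ one has $\|a\|=\sup\{\|\varrho(a)\|:\varrho\ \text{irreducible}\}$, the supremum over a family separating the points of the algebra. By Corollary \ref{factorial_square} every irreducible representation is unitarily equivalent to $\pi_\mu$ for some $\mu\in(-1,1)$ or is one-dimensional. I would package the latter by extending $\{\pi_\mu\}$ continuously to the closed interval, setting $\pi_{\pm1}(u_1)=\mbox{\rm diag}(-1,1)$ and $\pi_{\pm1}(u_2)=\mbox{\rm diag}(\pm1,\mp1)$, the limits as $\mu\to\pm1$. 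Each boundary representation $\pi_1$ and $\pi_{-1}$ then splits as a direct sum of one-dimensional characters, and between them they exhaust all such characters; consequently
\[
\|a\|=\sup_{\mu\in[-1,1]}\|\pi_\mu(a)\|,\qquad a\in\cstar(\mathbb Z_2*\mathbb Z_2).
\]

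Next I would establish that $\mu\mapsto\pi_\mu(a)$ is norm-continuous on $[-1,1]$ for every $a$. This is immediate when $a$ lies in the group algebra, since then $\pi_\mu(a)$ is a fixed noncommutative polynomial in $\pi_\mu(u_1)$ and $\pi_\mu(u_2)$ whose entries are polynomial in $\mu$ and $\sqrt{1-\mu^2}$; for general $a$ it follows by uniform approximation, because $\|\pi_\mu\|\le1$ for all $\mu$. As $\{\lambda_n\}$ is dense in $(-1,1]$ with $\lambda_1=1$, its closure in $\mathbb R$ is $[-1,1]$, the accumulation at $-1$ capturing the characters split off by $\pi_{-1}$ and the value $\mu=1$ being attained outright. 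Continuity then upgrades density to
\[
\sup_{\mu\in[-1,1]}\|\pi_\mu(a)\|=\sup_{n}\|\pi_{\lambda_n}(a)\|=\|\pi(a)\|,
\]
whence $\|\pi(a)\|=\|a\|$, so $\pi$ is isometric and therefore faithful, which is the desired conclusion.

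The step I expect to be the main obstacle is the careful treatment of the two endpoints. There the off-diagonal entry $\sqrt{1-\mu^2}$ degenerates to $0$, the representations $\pi_{\pm1}$ become reducible, and I must verify both that the continuous extension to $[-1,1]$ genuinely recovers every one-dimensional character, so that no irreducible representation is omitted from the supremum defining $\|a\|$, and that the accumulation of $\{\lambda_n\}$ at $-1$ together with the explicit inclusion $\lambda_1=1$ really suffices to reach these boundary points in the limit. Once this is secured, the continuity-plus-density argument closes the proof with no further input.
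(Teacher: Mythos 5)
Your proof is correct and takes essentially the route the paper intends: Corollary \ref{bd} is stated without proof as an immediate consequence of Corollary \ref{factorial_square}, the implicit argument being exactly your density-plus-continuity computation of $\|a\|$ through the family $\{\pi_\mu\}_{\mu\in[-1,1]}$ and the identification of $\|\pi(a)\|$ with $\sup_n\|\pi_{\lambda_n}(a)\|$. Your explicit treatment of the endpoint representations $\pi_{\pm1}$ and the four characters into which they split supplies details the paper leaves unstated (its Corollary \ref{factorial_square} lists only two of the four one-dimensional representations), and it correctly closes the one step that genuinely needs checking.
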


\begin{corollary}\label{d=2} If $\A$ is the unital C$^*$-algebra given by
	\[
	\A=\left\{ f:[-1,1]\rightarrow\M_2(\mathbb C)\,|\,f\mbox{ is continuous and }f(-1) \mbox{ and }f(1)\mbox{ are diagonal}\right\},
	\]
	then there exists an isomorphism $\pi:\cstar(\mathbb Z_2 * \mathbb Z_2)\rightarrow\A$ such that
	\[
	\pi(u_1)[\lambda]=\left[\begin{array}{cc} -1 & 0 \\ 0 & 1 \end{array} \right] \,\mbox{ and }\,
	\pi(u_2)[\lambda]=\left[\begin{array}{cc} \lambda & \sqrt{1-\lambda^2} \\ \sqrt{1-\lambda^2} & -\lambda \end{array} \right],
	\mbox{ for all }\lambda\in[-1,1].
	\]
\end{corollary}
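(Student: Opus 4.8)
The plan is to construct $\pi$ from the universal property of the group $\cstar$-algebra and then to verify injectivity and surjectivity separately. I would first recall that $\cstar(\mathbb Z_2*\mathbb Z_2)$ is the universal unital $\cstar$-algebra generated by two symmetries, so that any pair of symmetries in a unital $\cstar$-algebra is the image of $(u_1,u_2)$ under a unique unital $*$-homomorphism. Writing $E$ for the constant function equal to $\mathrm{diag}(-1,1)$ and $F$ for the continuous function $F(\lambda)$ displayed in the statement, one checks that $E$ and $F$ are selfadjoint and satisfy $E^2=F^2=1_\A$, and that both take diagonal values at $\lambda=\pm1$ since $\sqrt{1-\lambda^2}$ vanishes there; hence $E,F\in\A$ are symmetries. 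The universal property then yields a unique unital $*$-homomorphism $\pi\colon\cstar(\mathbb Z_2*\mathbb Z_2)\to\A$ with $\pi(u_1)=E$ and $\pi(u_2)=F$, and it remains to show that $\pi$ is a $*$-isomorphism.

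For injectivity I would invoke the complete list of irreducible representations from Corollary \ref{factorial_square}. For $\lambda\in(-1,1)$ the evaluation map $\mathrm{ev}_\lambda\colon\A\to\M_2(\mathbb C)$, $f\mapsto f(\lambda)$, is an irreducible representation of $\A$, and by construction $\mathrm{ev}_\lambda\circ\pi=\pi_\lambda$. At each endpoint $\lambda=\pm1$ the coordinate characters $f\mapsto f(\pm1)_{ii}$ of the (now diagonal) algebra $\A$ compose with $\pi$ to give one-dimensional representations of $\cstar(\mathbb Z_2*\mathbb Z_2)$. Thus every irreducible representation of $\cstar(\mathbb Z_2*\mathbb Z_2)$ is, up to unitary equivalence, either some $\pi_\lambda$ with $\lambda\in(-1,1)$ or a character, and each factors as $\rho\circ\pi$ for an irreducible representation $\rho$ of $\A$. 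Consequently $\ker\pi$ lies in the kernel of every irreducible representation of $\cstar(\mathbb Z_2*\mathbb Z_2)$, and since these kernels intersect in $\{0\}$, the map $\pi$ is injective.

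For surjectivity I would show that $E$ and $F$ generate $\A$ as a $\cstar$-algebra. The image $\aR=\pi(\cstar(\mathbb Z_2*\mathbb Z_2))$ is a unital $\cstar$-subalgebra of $\A$. From $\tfrac12(1_\A\mp E)$ one recovers the constant diagonal matrix units $e_{11}$ and $e_{22}$, and compressing $F$ by these produces $\lambda\,e_{11}$, $-\lambda\,e_{22}$, $\sqrt{1-\lambda^2}\,e_{12}$ and $\sqrt{1-\lambda^2}\,e_{21}$ in $\aR$. Since $\aR$ contains the constants and $\lambda\,e_{jj}$, the Stone--Weierstrass theorem gives $C[-1,1]\,e_{11}$ and $C[-1,1]\,e_{22}$, i.e.\ every continuous diagonal function. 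Multiplying $\sqrt{1-\lambda^2}\,e_{12}$ on the left by the continuous diagonal functions already in $\aR$ and closing up, $\aR$ contains $h\,e_{12}$ for every $h$ in the closed ideal of $C[-1,1]$ generated by $\sqrt{1-\lambda^2}$; as $\sqrt{1-\lambda^2}$ vanishes exactly at $\pm1$, this ideal is precisely $\{h\in C[-1,1] : h(\pm1)=0\}$, and likewise for $e_{21}$. Together with the diagonal corners, these four pieces account for all of $\A$, so $\aR=\A$ and $\pi$ is surjective.

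The step I expect to require the most care is the off-diagonal computation in the surjectivity argument: identifying the closure of $\sqrt{1-\lambda^2}\cdot C[-1,1]$ with all continuous functions vanishing at $\pm1$. The containment in one direction is immediate, but the reverse needs the description of closed ideals of $C[-1,1]$ in terms of their zero sets, together with the observation that uniform approximation of a function $h$ with $h(\pm1)=0$ by multiples $g\,\sqrt{1-\lambda^2}$ is possible even when $h/\sqrt{1-\lambda^2}$ is unbounded near the endpoints, since both $h$ and any bounded multiple of $\sqrt{1-\lambda^2}$ are small there. Everything else is routine once Corollary \ref{factorial_square} is in hand.
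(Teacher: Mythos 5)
Your proof is correct, but it takes a genuinely different and more self-contained route than the paper. The paper's argument is a one-line appeal to Corollary \ref{bd}: the block-diagonal symmetries $\tilde u_1,\tilde u_2$ built from a dense sequence $\{\lambda_n\}\subset(-1,1]$ are universal, so the map $\tilde u_j\mapsto\pi(u_j)$ is an isomorphism onto $\A$; both the faithfulness and the identification of the generated $\cstar$-algebra with $\A$ are left implicit there. You instead build $\pi$ directly into $\A$ from the universal property, prove injectivity by checking that every irreducible representation on the list of Corollary \ref{factorial_square} factors through $\pi$ (the evaluations $\mathrm{ev}_\lambda$ for $\lambda\in(-1,1)$ give the $\pi_\lambda$, and the four coordinate characters at $\lambda=\pm1$ account for all characters of $\mathbb Z_2*\mathbb Z_2$, whose abelianization is $\mathbb Z_2\times\mathbb Z_2$), and prove surjectivity by an explicit generation argument with matrix units, Stone--Weierstrass on the diagonal, and the identification of the closed ideal of $C[-1,1]$ generated by $\sqrt{1-\lambda^2}$ with $\{h:h(\pm1)=0\}$ on the off-diagonal corners. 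What your approach buys is a complete, checkable proof that does not presuppose Corollary \ref{bd}; what the paper's approach buys is brevity, since the universality of the dense block-diagonal pair already encodes the injectivity. Your closing remark about the off-diagonal ideal computation is exactly the right point to be careful about, and your cutoff argument for approximating $h$ with $h(\pm1)=0$ by multiples of $\sqrt{1-\lambda^2}$ is sound; note also that surjectivity onto the \emph{closed} algebra $\A$ is unproblematic because the image of a $*$-homomorphism between $\cstar$-algebras is automatically closed.
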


\begin{proof} Using the notations established above,   
	the map that sends $\tilde u_j$ to $\pi(u_j)$ is a C$^*$-algebraic isomorphism.
\end{proof}

While the results above are essentially present in the literature, in one form or another, the following theorem concerning the injective envelope 
of the noncommutative square has not yet been noted. The proof of the result
also gives the determination of the local multiplier algebra of $\cstar(\mathbb Z_2 * \mathbb Z_2)$.

If $\A$ is any C$^*$-algebra, and if $\jay$ and $\K$ are essential ideals of $\A$ for which $\jay\subseteq\kay$, then there is a canonical embedding
of multiplier algebras $M(\kay)\rightarrow M(\jay)$. By considering the set $\mathcal E(\A)$ of essential ideals of $\A$, ordered by inclusion, one obtains a
direct limit C$^*$-algebra denoted by $M_{\textrm{loc}}(\A)$, given by 
\[
M_{\textrm{loc}}(\A) = \varinjlim_{\jay\in\mathcal E(\A)}M(\jay).
\]
The unital C$^*$-algebra $M_{\textrm{loc}}(\A)$ is called the local multiplier algebra of $\A$ \cite{pedersen1978}, 
and it has a realisation as a C$^*$-subalgebra of the enveloping injective
cover of $\A$ (indeed, sometimes $M_{\textrm{loc}}(\A)$, as we shall see below, is the enveloping injective cover of $\A$).

In the case of commutative C$^*$-algebras of the form $C_0(X)$, for a locally compact Hausdorff space $X$, the essential ideals are determined by dense open subsets $U$ of $X$,
and the multiplier algebras have the form $C(\beta U)$, where $\beta U$ is the Stone-\v Cech compactification of $U$.

In light of Corollary \ref{d=2}, our interest is in the case $X=[-1,1]$, but we shall actually use the dense open subset $Y=(-1,1)$ instead, 
since $M_{\textrm{loc}}(\A)=M_{\textrm{loc}}(\jay)$, for any essential ideal $\jay$ of $\A$.
Define a topological space $\Delta_Y$ by way of inverse topological limits:
\[
\Delta_Y =  \varprojlim_{U\in\mathcal E(Y)}\beta U,
 \]
where $\mathcal E(Y)$ is the set of all dense open subsets of $Y$. The space $\Delta_Y$ is compact, Hausdorff, and
extremely disconnected, which implies the abelian C$^*$-algebra 
\[
C(\Delta_Y)=C\left( \varprojlim_{U\in\mathcal E(Y)}\beta U\right)=\varinjlim _{U\in\mathcal E(Y)}C(\beta U)= M_{\textrm{loc}}(C(Y))
\]
is injective.

\begin{theorem}\label{injective_envelope_d=2} 
$\mathcal I\left( \mbox{\textrm{NC}}(2)\right) = C(\Delta_Y)\otimes\M_2(\mathbb C) = M_{\textrm{loc}}\left(\cstar(\mathbb Z_2 * \mathbb Z_2)\right)$.
\end{theorem}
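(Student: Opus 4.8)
The plan is to reduce the computation of $\mathcal I(\mbox{\rm NC}(2))$ to that of $\mathcal I(\A)$, where $\A$ is the C$^*$-algebra of Corollary \ref{d=2}, and then to identify this injective envelope with the local multiplier algebra of $\A$. Since the injective envelopes of an operator system and of its C$^*$-envelope coincide, and since Theorem \ref{cstar-env} together with Corollary \ref{d=2} gives $\cstare(\mbox{\rm NC}(2)) = \cstar(\mathbb Z_2 * \mathbb Z_2) \cong \A$, I would begin by recording $\mathcal I(\mbox{\rm NC}(2)) = \mathcal I(\A)$. The entire problem then becomes the chain of equalities $\mathcal I(\A) = C(\Delta_Y)\otimes\M_2(\mathbb C) = M_{\rm loc}(\A)$.

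Next I would compute $M_{\rm loc}(\A)$ explicitly. Realising $\A$ inside $C([-1,1],\M_2(\mathbb C))$, the set $\mathcal J = \{f\in\A : f(-1)=f(1)=0\}$ equals $C_0(Y)\otimes\M_2(\mathbb C)$ with $Y=(-1,1)$, and is an essential ideal of $\A$; since $M_{\rm loc}(\A)=M_{\rm loc}(\mathcal J)$ for any essential ideal, it suffices to compute $M_{\rm loc}(C_0(Y)\otimes\M_2(\mathbb C))$. Because $\M_2(\mathbb C)$ is simple and unital, the essential ideals of $C_0(Y)\otimes\M_2(\mathbb C)$ are exactly $C_0(U)\otimes\M_2(\mathbb C)$ for dense open $U\subseteq Y$, with multiplier algebras $M(C_0(U))\otimes\M_2(\mathbb C)=C(\beta U)\otimes\M_2(\mathbb C)$. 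Passing to the direct limit over $U\in\mathcal E(Y)$ and pulling the finite-dimensional factor through the limit yields $M_{\rm loc}(\A)=\left(\varinjlim_U C(\beta U)\right)\otimes\M_2(\mathbb C)=C(\Delta_Y)\otimes\M_2(\mathbb C)$, using the identification $M_{\rm loc}(C(Y))=C(\Delta_Y)$ recorded above (with $C(Y)$ understood as $C_0(Y)$).

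I would then observe that $C(\Delta_Y)\otimes\M_2(\mathbb C)$ is injective: the space $\Delta_Y$ is extremally disconnected, so $C(\Delta_Y)$ is injective, and injectivity is preserved under matrix amplification, whence $\M_2(C(\Delta_Y))=C(\Delta_Y)\otimes\M_2(\mathbb C)$ is injective. Finally, I would invoke the rigidity of the injective envelope to force the two algebras to coincide. Since $M_{\rm loc}(\A)$ is realised as a C$^*$-subalgebra of $\mathcal I(\A)$ containing (the image of) $\A$, we have $\A\subseteq M_{\rm loc}(\A)\subseteq\mathcal I(\A)$ with the middle algebra injective. Injectivity of $M_{\rm loc}(\A)$ provides a ucp map $\Phi:\mathcal I(\A)\to M_{\rm loc}(\A)$ extending the identity on $M_{\rm loc}(\A)$; composing with the inclusion $M_{\rm loc}(\A)\hookrightarrow\mathcal I(\A)$ gives a ucp self-map of $\mathcal I(\A)$ fixing $\A$ pointwise, which by the enveloping (rigidity) property of the injective envelope must be the identity. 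Hence its range $M_{\rm loc}(\A)$ is all of $\mathcal I(\A)$, completing the chain of equalities.

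The main obstacle, I expect, is the bookkeeping in the second step: verifying that the lattice of essential ideals of $\A$ is captured through the single essential ideal $C_0(Y)\otimes\M_2(\mathbb C)$, and that the multiplier and direct-limit constructions commute with the simple finite-dimensional tensor factor. The accompanying delicate point is ensuring that the canonical copy of $M_{\rm loc}(\A)$ sits inside $\mathcal I(\A)$ compatibly with $\A$, since it is precisely this compatibility that makes the final rigidity squeeze legitimate.
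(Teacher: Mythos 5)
Your proposal is correct and follows essentially the same route as the paper: reduce to the essential ideal $C_0((-1,1))\otimes\M_2(\mathbb C)$, identify the local multiplier algebra with $C(\Delta_Y)\otimes\M_2(\mathbb C)$, observe that this algebra is injective, and conclude by the rigidity of the injective envelope. The only difference is that you compute $M_{\rm loc}(C_0(Y)\otimes\M_2(\mathbb C))$ directly (correctly using simplicity of $\M_2(\mathbb C)$ to parametrise the essential ideals and to commute the tensor factor through the direct limit), where the paper cites \cite{argerami--farenick--massey2009} for this step, and you spell out the rigidity squeeze that the paper leaves implicit.
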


\begin{proof} The local multiplier algebra of 
\[
\jay=\left\{ f\in\cstar(\mathbb Z_2 * \mathbb Z_2)\,|\,f(-1)=f(1)=0\right\}\cong C_0\left( (-1,1) \right)\otimes\M_2(\mathbb C)
\]
is $M_{\textrm{loc}}(\jay)=C(\Delta_Y)\otimes\M_2(\mathbb C)$ \cite{argerami--farenick--massey2009}, which is a type I AW$^*$-algebra and, hence, injective. 
Because $\jay$ is an essential ideal of $\cstar(\mathbb Z_2 * \mathbb Z_2)$, their local multiplier algebras coincide \cite{pedersen1978}. 
Hence, the operator system inclusions 
\[
\mbox{\textrm{NC}}(2) \subseteq M_{\textrm{loc}}(\cstare(\mbox{\textrm{NC}}(2))) 
\subseteq  \mathcal I\left( \cstare(\mbox{\textrm{NC}}(2))\right)
= \mathcal I\left( \mbox{\textrm{NC}}(2)\right),
\]
with $M_{\textrm{loc}}(\cstare(\mbox{\textrm{NC}}(2)))=
M_{\textrm{loc}}(\cstar(\mathbb Z_2 * \mathbb Z_2))$ injective, imply equality in the inclusions above holds throughout, by minimality of the injective
envelope.
\end{proof}

Because $C(\Delta_Y)\otimes\M_2(\mathbb C)$ has nontrivial centre, the canonical embedding
of $ \mbox{\textrm{NC}}(2)$ into its injective envelope is not pure \cite[Theorem 2.14]{farenick--tessier2022}.

 %%%%%%%%%%%%%%%%%%%%%
 \subsection{Noncommutative cubes, with $d\geq3$}
 
The C$^*$-algebra $\cstar(*_1^d\mathbb Z_2 )$ of the free product of $d$ copies of $\mathbb Z_2$
is residually finite dimensional \cite{bedos_omland2011}; thus, one will find noncommutative extreme points of $[-1,1]^d_{\textrm{nc}}$ at infinitely many 
finite levels of the grading of $[-1,1]^d_{\textrm{nc}}$. In fact, these extreme elements occur at all levels of the grading, as explained by the next theorem.
 
\begin{theorem}\label{ncd levels}{\textrm (\cite{davis1955,evert--helton--klep--mccullough2018,kriel2019})}
If $d\geq3$, then noncommutative extreme points of $[-1,1]^d_{\textrm{nc}}$ occur at every level of the
grading.
\end{theorem}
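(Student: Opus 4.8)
The plan is to pass to the operator-system side and reduce the statement to a purely representation-theoretic one. By Theorem~\ref{ncd relations} the noncommutative convex set $[-1,1]^d_{\rm nc}$ is isomorphic in ${\sf NCConv}$ to $S_{\rm nc}(\mbox{\rm NC}(d))$ via the canonical duality \eqref{e:cd}, and this isomorphism preserves both the levels of the grading and the noncommutative extreme points (the latter being defined categorically through purity and maximality). Proposition~\ref{nce}, applied to $G=\mathbb Z_2*\cdots*\mathbb Z_2$, then identifies the noncommutative extreme points of $S_{\rm nc}(\mbox{\rm NC}(d))$ lying at level $n$ with the restrictions to $\mbox{\rm NC}(d)$ of the $n$-dimensional irreducible representations $\pi:\cstar(\mathbb Z_2*\cdots*\mathbb Z_2)\rightarrow\B(\ell^2(n))$. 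Since $\cstar(\mathbb Z_2*\cdots*\mathbb Z_2)$ is universal for $d$-tuples of symmetries, such a representation is the same datum as a $d$-tuple $(v_1,\dots,v_d)$ of symmetries in $\M_n(\mathbb C)$ whose common commutant is $\mathbb C 1$. Thus the theorem reduces to the assertion that, for every $n\in\mathbb N$ and every $d\geq 3$, the algebra $\M_n(\mathbb C)$ admits an irreducible $d$-tuple of symmetries.

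Next I would reduce the number of generators from $d$ to $3$. If $\{v_1,v_2,v_3\}\subseteq\M_n(\mathbb C)$ is an irreducible set of symmetries, then setting $v_4=\cdots=v_d=1$ (the identity, which is a symmetry) only shrinks the commutant, so $\{v_1,\dots,v_d\}$ remains irreducible and each entry is a selfadjoint contraction. Hence it suffices to produce, for each $n$, three symmetries generating $\M_n(\mathbb C)$. For $n=1$ this is vacuous, so assume $n\geq 2$.

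The construction proceeds as follows. Choose $n$ distinct points $\omega_1,\dots,\omega_n$ on the unit circle whose multiset is closed under complex conjugation, and let $W=\mbox{\rm diag}(\omega_1,\dots,\omega_n)$ in the standard basis of $\mathbb C^n$. Let $A$ be the permutation matrix of the involution pairing each index $j$ with the index $j'$ for which $\omega_{j'}=\overline{\omega_j}$; then $A$ is a symmetry satisfying $AWA=W^*$, from which a short computation ($B^*=W^*A=AW=B$ and $B^2=A(WA)W=A(AW^*)W=1$) gives that $B:=AW$ is again a symmetry with $AB=W$. Finally, let $C=1-2xx^*$ be the Householder reflection determined by a unit vector $x$ all of whose coordinates are nonzero, so that $C$ is a symmetry whose off-diagonal entries $C_{ij}=-2x_i\overline{x_j}$ are all nonzero. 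The unital $*$-algebra generated by $\{A,B,C\}$ contains $W$ and $C$; since $W$ has distinct eigenvalues it generates the diagonal maximal abelian subalgebra, and because $C$ has connected (indeed complete) support graph, extracting $e_{ii}Ce_{jj}=C_{ij}e_{ij}$ and forming products $e_{ij}e_{jk}=e_{ik}$ yields every matrix unit. Hence $\langle A,B,C\rangle=\M_n(\mathbb C)$, so $(A,B,C)$ is irreducible, and the corresponding irreducible representation produces a noncommutative extreme point of $[-1,1]^d_{\rm nc}$ at level $n$.

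I expect the main obstacle to be the realization of a simple-spectrum unitary as a product of two symmetries: everything hinges on the observation that such a product must satisfy $AWA=W^*$, which forces the spectrum of $W$ to be conjugation-closed, and conversely that any conjugation-closed simple spectrum is realizable by a symmetry-pair through the pairing permutation $A$. Once this is in hand, the role of the third symmetry is merely to glue the two-dimensional blocks of $\{A,B\}$ into a single irreducible block, and its existence is guaranteed by the elementary connectivity criterion for a masa together with a matrix of connected support. The verification that $B=AW$ is a symmetry and the matrix-unit bookkeeping establishing $\langle W,C\rangle=\M_n(\mathbb C)$ are routine and I would relegate them to short computations.
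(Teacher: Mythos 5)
Your argument is correct, but the core construction is genuinely different from the paper's. Both proofs make the same initial reduction: via Theorem \ref{ncd relations} and Proposition \ref{nce}, a noncommutative extreme point of $[-1,1]^d_{\rm nc}$ at level $n$ is the same thing as an irreducible $d$-tuple of symmetries on $\ell^2(n)$, and padding an irreducible triple with further symmetries (you use identities, the paper uses arbitrary ones) handles $d>3$. Where you diverge is in producing the irreducible triple: the paper invokes Davis's theorem that $\B(\H)$ is the double commutant of three projections when $\dim\H\geq 3$, converts projections to symmetries, and then treats levels $1$ and $2$ by separate ad hoc arguments (an irreducible pair from the noncommutative square plus an extra symmetry at level $2$, vertices of the cube at level $1$). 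You instead give a self-contained explicit construction valid uniformly for all $n\geq 2$: a diagonal unitary $W$ with simple, conjugation-closed spectrum factors as a product $AB$ of two symmetries via the conjugation-pairing permutation $A$, and a Householder reflection $C$ with full support glues the diagonal masa generated by $W$ to all the matrix units. The identities $B^{*}=W^{*}A=AW=B$, $B^{2}=1$, and the matrix-unit extraction $e_{ii}Ce_{jj}=C_{ij}e_{ij}$ all check out, and Burnside then gives trivial commutant. Your approach buys elementarity and uniformity (no external citation, no low-dimensional case split); the paper's buys brevity. One small point to repair: the grading runs over $n\leq\aleph_0$, and your reduction is stated only for $n\in\mathbb N$, whereas the paper's appeal to Davis covers separable infinite dimension as well. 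Your construction does extend verbatim to $\ell^2(\mathbb N)$ --- take countably many distinct conjugation-closed points for $W$ and a unit vector $x$ with all coordinates nonzero for $C$; any operator commuting with $W$ is diagonal, and a diagonal operator commuting with $1-2xx^{*}$ is scalar, so irreducibility persists --- but you should say so explicitly to cover level $\aleph_0$.
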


\begin{proof}
If $\H$ is a separable Hilbert space of dimension at least three, then $\B(\H)$ is the double commutant of three projections in $\B(\H)$ \cite{davis1955}.
Since $2p-1$ is a symmetry, for every projection $p$, we deduce the existence of irreducible triples $(\tilde v_1,\tilde v_2,\tilde v_3)$ of symmetries acting on separable
Hilbert spaces of every dimension greater than three. Every such triple $(\tilde v_1,\tilde v_2,\tilde v_3)$ induces an irreducible representation $\pi$ on 
$\cstar(\mathbb Z_2*\mathbb Z_2*\mathbb Z_2)$ via $\pi(u_j)=\tilde v_j$, where $u_1,u_2,u_3$ are the canonical 
symmetries generating $\cstar(\mathbb Z_2*\mathbb Z_2*\mathbb Z_2)$. At level $2$, one may use an irreducible pair associated with the noncommutative square,
and then add an arbitrary symmetry to create an irreducible triple; similarly, the vertices of the $3$-cube $\mbox{\textrm C}(3)$
give rise to $1$-dimensional representations of  $\cstar(\mathbb Z_2*\mathbb Z_2*\mathbb Z_2)$.

For $d>3$, one creates $d$-tuples of irreducible symmetries at any given level of the grading 
by adding arbitrary symmetries to the irreducible triple of symmetries from the previous paragraph.
\end{proof}

The following result is a construction of irreducible symmetries that is very different from the symmetries considered by Davis \cite{davis1955} in the proof of Theorem \ref{ncd levels}
above. 

\begin{theorem}\label{semiclassical}
If $n = 2^m$, then there exists an irreducible set $A_0, A_1,\ldots, A_m$ of $m+1$ symmetries in $\M_n(\mathbb C)$ such that $A_0,\dots,A_{m-1}$ are pairwise
commuting. 
\end{theorem}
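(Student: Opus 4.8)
The plan is to realise $\mathbb{C}^n$ as the $m$-fold tensor power $(\mathbb{C}^2)^{\otimes m}$ and to take $A_0,\dots,A_{m-1}$ to be the ``diagonal'' symmetries that generate the maximal abelian algebra of diagonal matrices, while choosing $A_m$ to be a single reflection whose matrix has no zero entries off the diagonal. First I would set, for $0\le j\le m-1$,
\[
A_j = I_2^{\otimes j}\otimes \sigma\otimes I_2^{\otimes(m-1-j)},\qquad
\sigma=\begin{bmatrix}1&0\\0&-1\end{bmatrix},
\]
where $I_2$ is the $2\times 2$ identity. Each $A_j$ is selfadjoint with $A_j^2=I$, so it is a symmetry, and since $\sigma$ acts in distinct tensor slots the family $A_0,\dots,A_{m-1}$ commutes pairwise. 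The structural fact to record is that these $m$ commuting symmetries generate the full algebra $\mathcal D$ of diagonal matrices in $\M_n(\mathbb{C})$: the standard basis vector indexed by $(i_1,\dots,i_m)\in\{0,1\}^m$ is a joint eigenvector of $(A_0,\dots,A_{m-1})$ with eigenvalue tuple $\bigl((-1)^{i_1},\dots,(-1)^{i_m}\bigr)$, and because this assignment is injective on $\{0,1\}^m$, the $n=2^m$ joint eigenspaces are all one-dimensional. Hence $A_0,\dots,A_{m-1}$ admit a common eigenbasis with pairwise distinct eigenvalue tuples, so the von Neumann algebra they generate is $\mathcal D$, whose commutant is $\mathcal D$ itself.

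Next I would define the Householder reflection $A_m=I-2vv^*$, where $v=n^{-1/2}(1,1,\dots,1)^T$ is a unit vector and $I$ is the $n\times n$ identity. A direct check gives $A_m^*=A_m$ and $A_m^2=I-4vv^*+4v(v^*v)v^*=I$, so $A_m$ is a symmetry. Its entries are $a_{ii}=1-\tfrac2n$ and $a_{ij}=-\tfrac2n\neq0$ for $i\neq j$; in particular no off-diagonal entry vanishes. I would then compute the commutant of $\{A_0,\dots,A_m\}$. Any matrix commuting with all of $A_0,\dots,A_{m-1}$ lies in $\mathcal D'=\mathcal D$, hence is diagonal, say $D=\mathrm{diag}(d_1,\dots,d_n)$. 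Imposing $DA_m=A_mD$ and comparing $(i,j)$ entries yields $(d_i-d_j)a_{ij}=0$ for all $i,j$; since $a_{ij}\neq0$ whenever $i\neq j$, this forces $d_1=\cdots=d_n$, so $D$ is scalar. Thus the commutant of $\{A_0,\dots,A_m\}$ is $\mathbb{C}I$, which is exactly the statement that the set is irreducible.

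The only point requiring genuine care is the claim that the commuting family $A_0,\dots,A_{m-1}$ generates all of $\mathcal D$, equivalently that its joint eigenspaces are one-dimensional; everything after that reduces to the two-line commutant calculation above. I expect no serious obstacle: the dimension count $n=2^m$ is precisely what makes the $m$ binary eigenvalue patterns exhaust the diagonal, and the choice of $A_m$ as a reflection along a vector with no zero entries is exactly what ``connects'' the diagonal algebra down to the scalars. (The degenerate case $m=0$, $n=1$, is trivial, and for $m\ge 1$ one has $n\ge 2$, so $2/n\neq0$ and the off-diagonal entries of $A_m$ are genuinely nonzero.)
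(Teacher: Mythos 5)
Your proof is correct and follows essentially the same strategy as the paper's: your commuting family $A_0,\dots,A_{m-1}$ is exactly the paper's family of diagonal sign matrices (written in tensor-product rather than binary-digit form), and the concluding step --- a diagonal matrix commuting with a symmetry having no zero off-diagonal entries must be scalar --- is identical. The only difference is that you take $A_m$ to be the Householder reflection $I-2vv^*$ along the all-ones vector rather than the normalized Hadamard tensor power $n^{-1/2}H$; both choices serve the same purpose and the argument goes through unchanged.
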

\begin{proof}
	Let $k$ be a  positive integer and express $k$ in its binary representation  $$k=\sum_{i=0}^{q}k_i2^i \hspace*{1.5cm}k_i\in\{0,1\},$$  
	for a certain positive integer $q$. Define $m$ diagonal matrices $A_0, A_1,\ldots, A_{m-1}$ by setting $$A_i(k,k)=(-1)^{k_i},$$
	where $A_i(k,k)$ represents the $k$th diagonal entry of $A_i$.
	It follows directly that every  $A_i$ for $0\leq i \leq {m-1}$ is a symmetry. We claim that only diagonal matrices commute with all $A_i$. Suppose that $MA_i=A_iM$ for all $i$, where  $M\in \M_n(\mathbb C)$. Take distinct $h$ and $k$ with $0\leq h,k \leq {n-1}$. Then, there exists some $i$, $0\leq i \leq {m-1}$, for which $h_i \neq k_i$. Thus, $A_i(h,h)\neq A_i(k,k)$. Since $M$ commute with $A_i$, we have 
	\begin{align*}
		(MA_i)(h,k)=(A_iM)(h,k) \,\mbox{and }\,  M(h,k)A_i(k,k)=A_i(h,h)M(h,k).	
	\end{align*}
	As $A_i(h,h)\neq A_i(k,k)$, we must have that $M(h,k)=0$. Hence, $M(h,k)=0$ whenever $h\neq k$ which shows that $M$ is diagonal.
	
	Next, define 
	\[
	H = \underbrace{
		\begin{pmatrix}
			1 & 1 \\
			1 & -1
		\end{pmatrix}
		\otimes
		\cdots
		\otimes
		\begin{pmatrix}
			1 & 1 \\
			1 & -1
	\end{pmatrix}}_{m\text{-times}}
	\]
	and set $A_m=\frac{1}{\sqrt{n}}H$, which is a symmetry.
	
	Now, let $M$ be a diagonal matrix that commutes with $A_m$, hence commutes with $H$. Choose distinct integers $h$ and $k$ where $0\leq h,k\leq n-1$. Then,
	\begin{align*}
		(MH)(h,k)=(HM)(h,k)\, \mbox{and}\,  M(h,h)H(h,k)=H(h,k)M(k,k).
	\end{align*}
	Note that all entries of $H$ are nonzero, so this implies that $M(h,h)=M(k,k)$. As this holds for all distinct $h$ and $k$, we conclude that $M$ is a scalar multiple of the identity matrix. 
	
	Hence, the common commutant of the symmetries  $A_0,A_1,\ldots,A_m$ 
	are scalar multiples of the identity matrix. 
	\end{proof}

\begin{corollary} For every $d\geq 3$, there exists a noncommutative extreme point $(a_1,\dots,a_d)$
of $[-1,1]^d_{\textrm{nc}}$ at level $2^{d-1}$ of the grading such that 
$a_1,\dots, a_{d-1}$ are pairwise commuting.
\end{corollary}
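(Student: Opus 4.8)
The plan is to read this off directly from Theorem~\ref{semiclassical} by translating from irreducible families of symmetries to noncommutative extreme points of the cube, exactly as was done in the proof of Theorem~\ref{ncd levels}. Setting $m=d-1$ in Theorem~\ref{semiclassical} produces, on the Hilbert space $\ell^2(2^{d-1})=\mathbb C^{2^{d-1}}$, an irreducible set $A_0,A_1,\dots,A_{d-1}$ of $d$ symmetries in $\M_{2^{d-1}}(\mathbb C)$ whose first $d-1$ members $A_0,\dots,A_{d-2}$ are pairwise commuting. I would relabel these as $a_1=A_0,\ a_2=A_1,\ \dots,\ a_{d}=A_{d-1}$, so that $a_1,\dots,a_{d-1}$ pairwise commute and $\{a_1,\dots,a_d\}$ is an irreducible set living at level $2^{d-1}$ of the grading.

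Next I would observe that $(a_1,\dots,a_d)$ belongs to $[-1,1]^d_{2^{d-1}}$: each $a_j$ is a selfadjoint unitary, hence $a_j^*=a_j$ with spectrum contained in $\{-1,1\}$, so $\|a_j\|\le 1$ and each $a_j$ is a selfadjoint contraction. To see that the tuple is a noncommutative extreme point, I would invoke the universality of $\cstar(\mathbb Z_2*\cdots*\mathbb Z_2)$ for symmetries: the assignment $u_j\mapsto a_j$ extends to a unital $*$-homomorphism $\pi:\cstar(\mathbb Z_2*\cdots*\mathbb Z_2)\to\M_{2^{d-1}}(\mathbb C)$, and because $\{a_1,\dots,a_d\}$ is irreducible, $\pi$ is an irreducible representation. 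By Proposition~\ref{nce}, the restriction $\phi=\pi_{\vert\mbox{\rm NC}(d)}$ is then a noncommutative extreme point of $S_{\rm nc}(\mbox{\rm NC}(d))$.

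Finally I would transport this conclusion across the categorical duality. By Theorem~\ref{ncd relations}, $S_{\rm nc}(\mbox{\rm NC}(d))\cong_{{\sf NCConv}}W_{\rm nc}(\mathfrak u)=[-1,1]^d_{\rm nc}$, with the isomorphism sending a ucp map $\psi$ to $(\psi(u_1),\dots,\psi(u_d))$. Since isomorphisms in ${\sf NCConv}$ carry noncommutative extreme points to noncommutative extreme points, the image $(\phi(u_1),\dots,\phi(u_d))=(a_1,\dots,a_d)$ is a noncommutative extreme point of $[-1,1]^d_{\rm nc}$ at level $2^{d-1}$, with $a_1,\dots,a_{d-1}$ pairwise commuting, as required.

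There is no substantive obstacle here: all of the mathematical content lives in Theorem~\ref{semiclassical}, and the corollary is a bookkeeping translation through Proposition~\ref{nce} and the duality. The two small points I would state explicitly are the index arithmetic — taking $m=d-1$ yields precisely $d-1$ pairwise commuting symmetries $A_0,\dots,A_{d-2}$ among the $d$ generators and places them at level $2^m=2^{d-1}$ — and the fact that noncommutative extremality is preserved by ${\sf NCConv}$-isomorphisms, which is what legitimises passing from the extreme point of $S_{\rm nc}(\mbox{\rm NC}(d))$ to one of $[-1,1]^d_{\rm nc}$.
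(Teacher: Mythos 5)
Your proposal is correct and is exactly the argument the paper intends: the corollary is stated without proof because it follows from Theorem~\ref{semiclassical} with $m=d-1$, combined with the passage from irreducible tuples of symmetries to noncommutative extreme points via Proposition~\ref{nce} and the duality of Theorem~\ref{ncd relations}, just as in the proof of Theorem~\ref{ncd levels}. Your index bookkeeping ($d$ symmetries on $\mathbb C^{2^{d-1}}$, the first $d-1$ of which commute) is also right.
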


In a different direction, but in a similar spirit, Sunder \cite{sunder1988}
has constructed extremal representations of $d$-cubes in which no proper subset
of $d'<d$ coordinates yields an extremal representation of the $d'$-cube.
 
%%%%%%%%%%%%%%%%%%%%%%%%%%%%%%%%%%%%%%%%%%%%%%%%%%%
\section{Representations of Noncommutative Prisms}
%%%%%%%%%%%%%%%%%%%%%%%%%%%%%%%%%%%%%%%%%%%%%%%%%%%

Our first goal in this section is to show that the operator system $\mbox{\textrm{NCP}}(3)$ and noncommutative convex set $\mbox{\textrm P}(3)^{\textrm{max}}$
are dual objects, which is a rephrasing of the Halmos-Mirman Theorem.

\begin{theorem}[Halmos-Mirman]\label{halmos-mirman2}
$\mbox{\textrm P}(3)^{\textrm{max}} = W_{\textrm{nc}}(w,v) \cong_{{\textrm{NCConv}}} S_{\textrm{nc}}(\mbox{\textrm{NCP}}(3))$,
where $w$ and $v$ are the canonical unitaries in $\cstar(\mathbb Z_3 * \mathbb Z_2)$ that arise from the generators 
of the cyclic groups $\mathbb Z_3$ and $\mathbb Z_2$, respectively. 
\end{theorem}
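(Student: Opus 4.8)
The plan is to treat the two asserted relations separately. The isomorphism $W_{\rm nc}(w,v) \cong_{{\sf NCConv}} S_{\rm nc}(\mbox{\rm NCP}(3))$ is a direct instance of the Webster--Winkler and Davidson--Kennedy duality (\ref{e:cd}) applied to the tuple $(w,v)$: since $w^* = w^2$ and $v^* = v$, the operator system $\mathcal O_{(w,v)}$ spanned by $\{1,w,w^*,v,v^*\}$ is exactly $\mbox{\rm NCP}(3)$, so $S_{\rm nc}(\mbox{\rm NCP}(3)) \cong_{{\sf NCConv}} W_{\rm nc}(w,v)$. Thus the content of the theorem is the set equality $\mbox{\rm P}(3)^{\rm max} = W_{\rm nc}(w,v)$. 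The inclusion $W_{\rm nc}(w,v) \subseteq \mbox{\rm P}(3)^{\rm max}$ is already furnished by Theorem \ref{ncp relations}, so I concentrate on the reverse inclusion. I identify an element of $\mbox{\rm P}(3)^{\rm max}$ at level $n$ with a pair $(x,y)$ of operators on $\H = \ell^2(n)$, writing $x = a_1 + i a_2$ for the first two (selfadjoint) coordinates and $y = a_3$ for the third; the defining condition $W_1(a_1,a_2,a_3) \subseteq \mbox{\rm P}(3) = \mbox{\rm Conv}\,C_3 \times [-1,1]$ says precisely that the numerical range of $x$ lies in the triangle $\mbox{\rm Conv}\,C_3$ and that $y$ is a selfadjoint contraction. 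It then suffices to produce a ucp map $\phi \colon \mbox{\rm NCP}(3) \to \B(\H)$ with $\phi(w) = x$ and $\phi(v) = y$, for then $(x,y) = (\phi(w),\phi(v)) \in W_n(w,v)$.

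To build such a $\phi$, I construct a single unital $*$-representation of $\cstar(\mathbb Z_3 * \mathbb Z_2)$ simultaneously dilating $x$ and $y$. Mirman's theorem provides a normal dilation $N$ of $x$ on a space $\K_1 \supseteq \H$ whose spectrum is the vertex set $C_3$ of the triangle; as $C_3$ consists of cube roots of unity, $N$ is a unitary with $N^3 = 1$, carrying a representation of $\mathbb Z_3$. Separately, Halmos's theorem dilates the selfadjoint contraction $y$ to a symmetry. To realise both on one space, I extend $y$ by zero on $\K_1 \ominus \H$ to a selfadjoint contraction on $\K_1$, apply Halmos there to obtain a symmetry $U$ on $\K := \K_1 \oplus \K_1$ with $P_\H U|_\H = y$, and ampliate $N$ to the order-three unitary $N \oplus N$ on $\K$, for which $P_\H (N \oplus N)|_\H = x$ still holds (nested compressions compose). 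Because $\mathbb Z_3 * \mathbb Z_2$ is a free product, no compatibility relation is imposed between an order-three unitary and a symmetry, so $N \oplus N$ and $U$ jointly determine a unital $*$-representation $\pi \colon \cstar(\mathbb Z_3 * \mathbb Z_2) \to \B(\K)$ with $\pi(w) = N \oplus N$ and $\pi(v) = U$. Compressing to $\H$, the map $\phi := P_\H \pi(\cdot)|_\H$ is ucp with $\phi(w) = x$ and $\phi(v) = y$; since the construction never restricts $\dim \H$, it covers every level of the grading, including $n = \aleph_0$.

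The main obstacle is exactly this gluing, or ``pairing,'' step: Halmos's and Mirman's theorems are each single-operator dilation statements on their own Hilbert spaces, not a joint-dilation result. Overcoming it rests on two observations --- that Mirman's normal dilation with spectrum $C_3$ is automatically an order-three unitary and Halmos's selfadjoint unitary dilation is a $\mathbb Z_2$-symmetry, and that the freeness of $\mathbb Z_3 * \mathbb Z_2$ allows arbitrary such operators to be assembled into one representation after the harmless extension of $y$ by zero and ampliation of $N$. The remaining points to verify are routine: that the trivial extension and the ampliation preserve the compressions to $\H$, and that Mirman's hypothesis is satisfied by the (closed) numerical range of $x$.
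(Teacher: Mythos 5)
Your proposal is correct and follows essentially the same route as the paper: the paper also proves the reverse inclusion by taking Mirman's order-three unitary dilation of $x$ first, transporting $y$ to that larger space as a trivially extended selfadjoint contraction, applying the Halmos construction there, and invoking the universal property of $\mathbb Z_3 * \mathbb Z_2$ to assemble a single representation whose compression recovers $(x,y)$ (the paper isolates this gluing as a separate joint-dilation lemma for general $k$, and extends the order-$k$ unitary by the identity on the second summand where you use $N\oplus N$, but these are cosmetic differences). The duality with $S_{\rm nc}(\mbox{\rm NCP}(3))$ is likewise handled by the Webster--Winkler/Davidson--Kennedy correspondence in both treatments.
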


%%%%%%%%%%%%%%%%%%%%%%%
\subsection{Proof of the Halmos-Mirman Theorem}\label{deferred proofs}

To prove the Halmos-Mirman Theorem above, some preliminary results are required, beginning with the
following, somewhat surprising, dilation theorem.

\begin{theorem}\label{joint dilation}
If operators $a$ and $b$ have dilations to unitary operators of order $k$ and $2$, respectively, then
$a$ and $b$ have a common dilation to unitary operators of order $k$ and $2$, respectively.
\end{theorem}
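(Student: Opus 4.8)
The plan is to glue the two given dilation spaces together along the common space $\H$ on which $a$ and $b$ act, and, when building each dilation, to fill in the directions contributed by the \emph{other} dilation with a scalar multiple of a root of unity of the prescribed order. The guiding observation---and the reason the statement is mildly surprising---is that no compatibility between the two dilations is required: any direction orthogonal to $\H$ can be absorbed harmlessly by a scalar unitary of the correct order, so two entirely independent dilations can always be placed on a single space.

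Concretely, I would first record the hypotheses as follows. There is a Hilbert space $\K_a = \H \oplus \M_a$ and a unitary $U \in \B(\K_a)$ with $U^k = 1$ for which $a = P_\H U|_\H$, and there is a Hilbert space $\K_b = \H \oplus \M_b$ and a unitary $V \in \B(\K_b)$ with $V^2 = 1$ for which $b = P_\H V|_\H$; here $\M_a = \K_a \ominus \H$, $\M_b = \K_b \ominus \H$, and $P_\H$ denotes the orthogonal projection onto $\H$ in whichever space is under discussion. Next I would form the single Hilbert space
\[
\K = \H \oplus \M_a \oplus \M_b,
\]
in which $\K_a$ and $\K_b$ sit as the subspaces $\H \oplus \M_a$ and $\H \oplus \M_b$, sharing the same first summand $\H$. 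Fixing a $k$-th root of unity $\omega$, I would then define $\tilde U, \tilde V \in \B(\K)$ by
\[
\tilde U = U \oplus \omega 1_{\M_b} \quad\text{and}\quad \tilde V = V \oplus 1_{\M_a},
\]
where $U$ acts on $\K_a$ and $V$ acts on $\K_b$ (so that $\K = \K_a \oplus \M_b = \K_b \oplus \M_a$ furnishes both decompositions).

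The remaining verification is routine. Both $\tilde U$ and $\tilde V$ are unitary, and $\tilde U^k = U^k \oplus \omega^k 1_{\M_b} = 1_\K$ while $\tilde V^2 = V^2 \oplus 1_{\M_a} = 1_\K$, so they are unitaries of order $k$ and $2$ as required; moreover, since $U$ and $V$ occur as direct summands, $\tilde U$ and $\tilde V$ have exactly the same orders as $U$ and $V$. Finally, because $\H \subseteq \K_a$ and $\tilde U$ restricts to $U$ on $\K_a$, one has $\tilde U|_\H = U|_\H$ and hence $P_\H \tilde U|_\H = P_\H U|_\H = a$; the identical argument with $\K_b$ gives $P_\H \tilde V|_\H = b$, so $\tilde U$ and $\tilde V$ constitute the desired common dilation. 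I do not expect a genuine obstacle here: the entire content of the theorem is the recognition that the extra directions $\M_a$ and $\M_b$ can be neutralised by scalars without disturbing the compression back to $\H$.
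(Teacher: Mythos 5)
Your proof is correct, and it takes a genuinely different route from the paper's. You amalgamate the two \emph{given} dilation spaces over the common subspace $\H$, forming $\K=\H\oplus\M_a\oplus\M_b$ and padding each unitary by a scalar of the appropriate order on the summand contributed by the other dilation; the verification that the compressions to $\H$ are unaffected is exactly as you say, since each of $\K_a$ and $\K_b$ is invariant under the corresponding extended unitary. The paper instead uses only the dilation of $a$: it transports $b$ to that dilation space as $\tilde b=zbz^*$ (still a selfadjoint contraction) and then re-dilates $\tilde b$ from scratch via the explicit Halmos $2\times 2$ construction on $\K\oplus\K$, extending $y$ by the identity on the second summand, before packaging everything as a ucp map on $\cstar(\mathbb Z_k*\mathbb Z_2)$. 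Your argument is more elementary and more symmetric -- it never invokes the Halmos formula, and it generalises verbatim to unitaries of arbitrary orders $k$ and $m$ (indeed to any finite families of independently dilatable operators), whereas the paper's proof is tied to the second operator being a selfadjoint contraction. What the paper's version buys is that it only needs one of the two hypothesised dilations and hands over the representation $\pi$ and the compression map in the exact form used in the proof of the Halmos--Mirman theorem; but your construction yields the same data via the universal property of the free product. One immaterial quibble: your parenthetical claim that $\tilde U$ has ``exactly the same order'' as $U$ need not hold (if $U^d=1$ for a proper divisor $d$ of $k$ and $\omega$ is a primitive $k$-th root of unity, then $\tilde U$ has order $k$, not $d$), but all that is required is $\tilde U^k=1$ and $\tilde V^2=1$, which you verify.
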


\begin{proof} By hypothesis, there exist a Hilbert space $\K$, a unitary operator $y$, and an isometry $z:\H\rightarrow\K$
such that $y^k=1_\K$ and $a=z^*yz$. Because $b$ is the compression of a selfadjoint unitary operator,
$b$ is a selfadjoint contraction.

Let $\tilde b=zbz^*\in\B(\K)$, which is a selfadjoint contraction, and 
consider the operators $\tilde y$ and $\tilde v$ on $\K\oplus\K$ given by
\[
\tilde y = \left [\begin{array}{cc} y & 0  \\  0 & 1_{\H^{\oplus 3}} \end{array}   \right]
\,\mbox{ and }\,
\tilde v = \left [\begin{array}{cc} \tilde b & (1 -\tilde b^2)^{1/2}  \\  (1-\tilde b^2)^{1/2} & - \tilde b \end{array}   \right].
\]
Thus, $\tilde y$ and $\tilde v$ are unitary operators of order $k$ and $2$, respectively. By the universal property of the free product, 
there is a unital *-homomorphism $\pi:\cstar(\mathbb Z_k * \mathbb Z_2)\rightarrow\B(\K\oplus\K)$
such that $\pi(w)=\tilde y$ and $\pi(v)=\tilde v$, where $w$ and $v$ are the canonical unitary generators of 
$\cstar(\mathbb Z_k * \mathbb Z_2)$.
Let $g:\K\rightarrow\K\oplus\K$ be the linear isometry that embeds $\K$ into the first direct summand of $\K\oplus\K$,
and define a ucp map 
$\phi:\cstar(\mathbb Z_k * \mathbb Z_2)\rightarrow\B(\H)$ by $\phi(x)=z^*g^*\pi(x)gz$, for all $x\in \cstar(\mathbb Z_k * \mathbb Z_2)$. Hence,
\[
\phi(w)=z^*yz=a \,\mbox{ and }\, \phi(v)= z^*\tilde b z = z^*(zbz^*)z = b,
\]
which gives a common dilation of $a$ and $b$ to $w$ and $v$.
\end{proof}

The next preliminary result was mentioned already as Theorem \ref{ncp relations}; it is restated and proved below.

\begin{theorem}\label{ncp relations2}
For every $k\geq 3$, 
\[
\mbox{\textrm P}(k)^{\textrm{min}} \subseteq W_{\textrm{nc}}(w,v)= \mbox{\textrm P}(k)_{\textrm{nc}} \subseteq \mbox{\textrm P}(k)^{\textrm{max}},
\]
where $w$ and $v$ are the canonical unitaries in $\cstar(\mathbb Z_k * \mathbb Z_2)$ that arise from the generators 
of the cyclic groups $\mathbb Z_k$ and $\mathbb Z_2$, respectively. 
\end{theorem}

\begin{proof} By the definition of minimal and maximal noncommutative convex sets,
we are only required to show that the joint numerical range of the pair $(w,v)$ is the classical $k$-prism
$\mbox{\textrm P}(k)$.

The inclusion $W_1(w,v)\subseteq \mbox{\textrm P}(k)$ is apparent from the fact that $w$ and $v$ are unitaries of order $k$ and $2$, respectively. Thus, 
$\phi(w)\in\mbox{\textrm Conv}\,C_k$ and $\phi(v)\in \mbox{\textrm Conv}\,C_2$, for every state $\phi$ on $\cstar(\mathbb Z_k * \mathbb Z_2)$.

To show the reverse inclusion, 
it is enough, by the convexity of the $W_1(w,v)$ and $\mbox{\textrm P}(k)$, to show that each extreme point of $\mbox{\textrm P}(k)$
is an element of $W_1(w,v)$. The extreme points of $\mbox{\textrm P}(k)$ are 
$(\omega^j, \pm 1)$ for $1\leq j\leq k$, where $\omega$ is a primitive $k$-th root of unity.

Let $u\in\M_k(\mathbb C)$ be the cyclic shift matrix that permutes the canonical basis vectors of $\mathbb C^k$ as follows:
\[
e_1\mapsto e_k, \, e_2\mapsto e_1,\, \dots, e_k\mapsto e_{k-1}.
\]
For each $j=1,\dots,k$, let $\tilde w_j=\omega^j u$, which is a unitary of order $k$, and let $\tilde v_+\in \M_k(\mathbb C)$ be the symmetry
\[
\tilde v_+= \left[\begin{array}{cc} 0&1 \\ 1&0 \end{array}\right] \bigoplus 1_{k-2}.
\]
The unit vector $\xi=k^{-1/2}\displaystyle\sum_{j=1}^ke_j$ is a common eigenvector for $\tilde v_+$ and all $\tilde w_j$. An application of the
vector state $\phi(x)=\langle x\xi,\xi\rangle$ on $\M_k(\mathbb C)$ to all pairs $(\tilde w_j, \tilde v_+)$ produces all extreme points of $\mbox{\textrm P}(k)$
that are of the form $(\omega^j,1)$. To obtain extreme points of the form $(\omega^j,-1)$, one applies $\phi$ to all pairs $(\tilde w_j, \tilde v_-)$,
where $ \tilde v_-=- \tilde v_+$. Because these matrix pairs are composed of unitaries of order $k$ and $2$, each pair 
$(\tilde{w}_j,\tilde{v}_{\pm})$ is determined by 
some representation $\pi:\cstar(\mathbb Z_k*\mathbb Z_2)\rightarrow\M_k(\mathbb C)$ sending $w$ to $\tilde w_j$ and $v$ to
$\tilde{v}_{\pm}$; thus, the composition $\phi\circ\pi$ yields a state $\psi$ on $\cstar(\mathbb Z_k*\mathbb Z_2)$ such that
$\left(\psi(w),\psi(v)\right)=(\omega^j,\pm 1)$. Hence, $\mbox{\textrm P}(k)\subseteq W_1(w,v)$.
\end{proof}

We can now prove Halmos-Mirman Theorem (Theorem \ref{halmos-mirman}).

\begin{proof} Theorem \ref{ncp relations2} shows that $W_1(w,v)= \mbox{\textrm P}(3)$ and $W_{\textrm{nc}}(w,v)\subseteq \mbox{\textrm P}(3)^{\textrm{max}}$.
To prove $\mbox{\textrm P}(3)^{\textrm{max}}\subseteq W_{\textrm{nc}}(w,v)$, fix $n\in\mathbb N\cup\{\aleph_0\}$ and
select $(a,b)\in\mbox{\textrm P}(3)_n$. By definition, $W_1(a)\subseteq \mbox{\textrm Conv}\,C_3$ and $W_1(b)\subseteq\mbox{\textrm Conv}\,C_2$.
The theorems of Mirman and Halmos show that $a$ and $b$ have independent dilations to unitaries of order 3 and 2, respectively; therefore,
by Theorem \ref{joint dilation}, $a$ and $b$ have a common dilation to such a pair of unitaries, which proves $(a,b)\in W_{n}(w,v)$.
\end{proof}

It is well-known that Mirman's Theorem does not extend to convex $k$-gons in $\mathbb C$ if $k>3$. The typical counterexample is as follows.
If $\epsilon>0$, then the numerical range of the matrix $y_\epsilon=\left[\begin{array}{cc} 0& 1+\epsilon \\ 0 & 0\end{array}\right]$ is a disc of radius $\frac{1+\epsilon}{2}$ with centre at the origin.
Therefore, for small enough $\epsilon$, we have $W_1(y_\epsilon)\subseteq \mbox{\textrm Conv} C_k$, for every $k\geq 4$; however, as $\|y_\epsilon\|=(1+\epsilon)>1$, $y_\epsilon$ does not
have a unitary dilation. In moving to the Halmos-Mirman Theorem,  
the triple $\mathfrak a_\epsilon=(1_2,\Re y_\epsilon, \Im y_\epsilon)$
satisfies, for sufficiently small $\epsilon>0$, $\mathfrak a_\epsilon\in \mbox{\textrm P}(k)^{\textrm{max}}$ and $\mathfrak a_\epsilon\not\in \mbox{\textrm P}(k)_{\textrm{nc}}$,
which implies the Halmos-Mirman Theorem does not extend to $k$-prisms if $k\geq 4$.

%%%%%%%%%%%%%%%%%%%%%%%%%
\subsection{Infinite-dimensional representations}

B\'edos and Omland \cite{bedos_omland2011} have shown that $\cstar(\mathbb Z_k * \mathbb Z_2)$ and $\cstar(*_1^d\mathbb Z_2)$, where $k,d\geq3$,
are primitive, and have
uncountably many non-unitarily equivalent irreducible representations on spaces of infinite dimension. Moreover, these C$^*$-algebras
admit faithful irreducible representations that contain no compact operators. 
Hence, by invoking Theorem \ref{inj-env} and \cite[Corollary 3.4]{bedos_omland2011}, we have the following result.

\begin{theorem}\label{bo} For $k,d\geq 3$,
there are uncountably many non-unitarily equivalent noncommutative extreme points of  
$S_{\textrm{nc}}(\mbox{\textrm{NCP}}(k))$ 
 and $[-1,1]^d_{\textrm{nc}}$
at level $\aleph_0$ of the grading, and the injective envelopes of $\mbox{\textrm{NCP}}(k)$ and $\mbox{\textrm{NC}}(d)$
are AW$^*$-factors of type III.
\end{theorem}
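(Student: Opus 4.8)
The plan is to prove the two assertions independently, since each assembles results already in hand: the claim about noncommutative extreme points rests on Proposition~\ref{nce}, the duality~\eqref{e:cd}, and the representation theory of B\'edos and Omland, while the claim about injective envelopes is a direct application of Theorem~\ref{inj-env}.

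For the extreme points, I would begin with the observation that the proof of Proposition~\ref{nce} applies verbatim to $\mbox{\rm NCP}(k)$ and $\mbox{\rm NC}(d)$: their spanning sets $\{1,w,\dots,w^{k-1},v\}$ and $\{1,u_1,\dots,u_d\}$ consist of unitaries, so any ucp extension of a restricted irreducible representation places these generators in its multiplicative domain and must therefore agree with the representation. Hence the noncommutative extreme points at level $n\le\aleph_0$ are exactly the restrictions of irreducible representations $\pi:\cstar(G)\to\B(\ell^2(n))$, for $G=\mathbb Z_k*\mathbb Z_2$ or $*_1^d\mathbb Z_2$. B\'edos and Omland furnish uncountably many pairwise non-unitarily equivalent irreducible representations on a separable infinite-dimensional space, i.e.\ at level $\aleph_0$; their restrictions are the desired extreme points. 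Inequivalence is inherited: a unitary intertwining two restrictions intertwines the representations on the C$^*$-algebra generated by the (unitary) generators, so the restrictions are inequivalent whenever the representations are. I would then transport these points through the ${\sf NCConv}$-isomorphism $S_{\rm nc}(\mathcal O_{\mathfrak x})\cong W_{\rm nc}(\mathfrak x)$, which preserves grading and extremality. For the cube this yields extreme points of $W_{\rm nc}(\mathfrak u)=[-1,1]^d_{\rm nc}$ by Theorem~\ref{ncd relations}, and for the triangular prism extreme points of $S_{\rm nc}(\mbox{\rm NCP}(3))\cong\mbox{\rm P}(3)^{\rm max}$ by the Halmos--Mirman Theorem~\ref{halmos-mirman2}.

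For the injective envelopes, I would use that the injective envelope of an operator system depends only on its C$^*$-envelope, together with Theorem~\ref{cstar-env}, which gives $\cstare(\mbox{\rm NCP}(k))=\cstar(\mathbb Z_k*\mathbb Z_2)$ and $\cstare(\mbox{\rm NC}(d))=\cstar(*_1^d\mathbb Z_2)$. Thus $\mathcal I(\mbox{\rm NCP}(k))=\mathcal I(\cstar(\mathbb Z_k*\mathbb Z_2))$ and $\mathcal I(\mbox{\rm NC}(d))=\mathcal I(\cstar(*_1^d\mathbb Z_2))$. It then remains to verify the hypotheses of Theorem~\ref{inj-env} for the finitely generated discrete groups $\mathbb Z_k*\mathbb Z_2$ and $*_1^d\mathbb Z_2$ with $k,d\ge3$: B\'edos--Omland supply primitivity, and their faithful irreducible representation with no nonzero compact operators supplies antiliminality, since it exhibits $\cstar(G)$ as an algebra with no nonzero liminal ideal. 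Theorem~\ref{inj-env} then delivers the type~III AW$^*$-factors.

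The substantive content --- the existence of uncountably many inequivalent infinite-dimensional irreducibles and the primitivity and antiliminality of these group C$^*$-algebras --- is imported from B\'edos--Omland, so the remaining labour is organisational. I expect the main point requiring care to be the identification of the target noncommutative convex set in the prism case: the method produces extreme points of $S_{\rm nc}(\mbox{\rm NCP}(k))$, which coincides with $\mbox{\rm P}(k)^{\rm max}$ through the duality and Theorem~\ref{ncp relations2} exactly when $k=3$ (where $\mbox{\rm NCP}(3)=\mathcal O_{(w,v)}$ and $W_{\rm nc}(w,v)=\mbox{\rm P}(3)^{\rm max}$), whereas for $k\ge4$ the statement is most precisely phrased at the level of the noncommutative state space $S_{\rm nc}(\mbox{\rm NCP}(k))$, since $\mbox{\rm NCP}(k)$ then strictly contains $\mathcal O_{(w,v)}$.
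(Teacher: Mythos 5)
Your proposal is correct and follows essentially the same route as the paper, whose proof consists precisely of citing B\'edos--Omland for primitivity, antiliminality, and the uncountable family of pairwise inequivalent infinite-dimensional irreducible representations, and then invoking Theorem~\ref{inj-env} (with Proposition~\ref{nce} implicitly supplying the passage from irreducible representations to noncommutative extreme points). Your closing caveat about the prism case for $k\ge 4$ --- that the method most naturally produces extreme points of $S_{\rm nc}(\mbox{\rm NCP}(k))$ rather than of $\mbox{\rm P}(k)^{\rm max}$, since the identification $W_{\rm nc}(w,v)=\mbox{\rm P}(k)^{\rm max}$ is established only for $k=3$ --- is a legitimate observation about the statement that the paper's own one-line proof does not address.
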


%%%%%%%%%%%%%%%%%%%%%%%
\subsection{Finite-dimensional representations}
A representation 
$\rho : G\rightarrow \operatorname{GL}(V)$ of a group $G$ on a finite-dimensional vector space $V$ over a field $\mathbb K$
is said to be irreducible if the only $\mathbb K$-linear maps on $V$ that commute with every $\rho(g)$, for $g\in G$, 
are scalar multiplies of the identity.

\begin{lemma}\label{representation_lemma}
	Let $k\geq 2$, and let $G$ be a finite group generated by two elements, $g$ and $h$, of orders $2$ and $k$, respectively. 
	If $G$ has a faithful irreducible representation 
	on a vector space $V$ of dimension $n$, then the matrix algebra $\M_n(\mathbb C)$ is generated by two unitaries
	of order $2$ and $k$.
\end{lemma}
\begin{proof}
	Let  $\rho : G\rightarrow \operatorname{GL}(V)$ be an irreducible representation of $G$, where $V$ is of dimension $n$. 
	As in \cite[Theorem II.1.1]{simon-book}, equip $V$ with an inner product such that the images of elements of $G$ in $\mbox{\textrm End}(V)$ are unitary operators acting on the complex	
	Hilbert space $V$ with respect to this inner product, giving rise to a representation $\tilde{\rho} : G\rightarrow \M_n(\mathbb C)$ 
	such that $v=\tilde{\rho}(g)$ and $w=\tilde{\rho}(h)$ are unitary matrices. Since the representation is faithful, we have $v^2=w^k=1$; 
	because the representation $\tilde{\rho}$ is irreducible, Schur's Lemma \cite[Theorem II.4.1]{simon-book} implies the 
	commutant of the set $\{v,w\}$ is trivial. Because the adjoint of $w$ is a polynomial in $w$, the algebra generated by $\{v,w\}$ is all of $\M_n(\mathbb C)$.
	 \end{proof}

Using linear algebra and the representation theory of finite groups, we have:
	 
\begin{theorem}\label{irr repn}
For every $n\geq1$ and $k\geq3$, the group $\mathbb Z_k*\mathbb Z_2$ admits an irreducible unitary representation
$\pi_n:\mathbb Z_k*\mathbb Z_2\rightarrow \mathcal U(n)$.
\end{theorem}

\begin{proof} If $n\leq k$, then let $W\in\mathcal U(n)$ be the diagonal matrix with diagonal entries $1,\zeta,\dots,\zeta^{k-1}$, where
$\zeta$ is a primitive $k$-th root of unity. Let $P\in\M_n(\mathbb C)$ be the matrix with each entry $1/n$, so that 
$V=1_n-2P\in\mathcal U(n)$ and $V^2=1_n$. If a matrix $X$ commutes with $W$, then $X$ is diagonal. If a diagonal matrix
commutes with $P$,
then all diagonal entries coincide. Thus, the matrices that commute with both $W$ and $V$ are scalar matrices, which proves the
homomorphism $\pi_n:\mathbb Z_k*\mathbb Z_2\rightarrow \mathcal U(n)$ in which $\pi_n(v)=V$ and $\pi_n(w)=W$ is
irreducible.
Suppose now that $k<n< 2k$, and let
$\{e_1,\dots,e_n\}$ denotes the standard orthonormal basis of $\mathbb C^n$. Set 
$W=U\oplus Z\in\mathcal U(n)$, where $U\in\mathcal U(k)$ is the standard circulant unitary (that shifts
the orthonormal basis of $\mathbb C^k$), and $Z\in\mathcal U(r)$ is the diagonal matrix with entries $\zeta,\dots,\zeta^r$, for a primitive
$k$-th root of unity $\zeta$. Let
$V\in\mathcal U(n)$ be the symmetry with $j$-th column given by $e_j$,
for all columns $j$ except columns $1+r$, for $r=1,\dots,n-k$, which are given respectively by $e_{k+r}$, 
and columns $k+r$, for $r=1,\dots,n-k$, which are given by $e_{1+r}$, respectively.
A matrix $X$ that commutes with $W$ has the form $X=C\oplus \bigoplus_{r=1}^{n-k} y_r$, 
where $C$ is a $k\times k$ circulant and $y_j\in\mathbb C$. Such
a matrix $X$ commutes with $V$ only if $y_1=\dots=y_{n-k}=\frac{1}{k}\mbox{Trace}(C)$ and the off-diagonal entries of $C$ are zero. 
Thus, the matrices that commute with both $W$ and $V$ are scalar matrices, which proves the
homomorphism $\pi_n:\mathbb Z_k*\mathbb Z_2\rightarrow \mathcal U(n)$ in which $\pi_n(v)=V$ and $\pi_n(w)=W$ is
irreducible. 

For the remaining cases of $n$, namely those $n$ for which $2k\leq n$, express $k$ as
$k=2^m\ell $, where $m\geq 0$ and $\ell\geq 1$ is an odd integer. Let $A_n$ denote the alternating group.
Because $n\geq 2k=2^{m+1}\ell$ implies $n\geq 2^m+\ell+2$,  
the element
\[
(1,\dots, 2^m)(2^m+1,\dots, 2^m+\ell)(a,b)\in A_n
\]
is of order $2^m\ell=k$, for $a,b> 2^m+\ell$. Thus, for such $n$, 
$A_n$ is $(2,k)$-generated \cite{miller1928}. Because $A_n$ is simple for $n\geq 5$,
in the cases for which $\ell\geq 3$ we have $n>6$; therefore,
every representation of $A_n$ is faithful, including the irreducible representations
of $A_n$ on vector spaces of dimension $n-1$ given in \cite{fulton-harris-book}. Thus,
Lemma \ref{representation_lemma} yields an irreducible unitary representation of $A_n\rightarrow\mathcal U(n-1)$.

In the case $\ell=1$, we dispense with the consideration of  $m=0$ and $m=1$ because the irreducible 
representations of $\mathbb Z_k*\mathbb Z_2$
occur in dimensions at most $2$ when $k=0,1$; for $m\geq 2$, we have $n>2k\geq 4$, and so the simplicity of $A_n$, for $n\geq 5$, yields the required
irreducible representation in a manner analogous to the case $\ell\geq3$.
\end{proof}

\begin{corollary} For every $k\geq 3$, the noncommutative prism
$\mbox{\textrm P}(k)_{\textrm{nc}}$ and noncommutative state space $S_{\textrm{nc}}(\mbox{\textrm{NCP}}(k))$
admit noncommutative extreme points at every finite graded level.
\end{corollary}

\begin{proof} Theorem \ref{irr repn} asserts $S_{\textrm{nc}}(\mbox{\textrm{NCP}}(k))$
has noncommutative extreme points at every finite graded level. Further, if $\pi$ is a representation of $\cstar(\mathbb Z_k * \mathbb Z_2)$, then
$\{\pi(w),\pi(w)^*,\pi(v)\}$ has trivial commutant whenever $\{\pi(w),\dots,\pi(w)^{k-1},\pi(v)\}$ has trivial commutant,
as any element commuting with 
$\pi(w)$ and $\pi(v)$ will also commute with powers of $\pi(v)$ and powers of $\pi(w)$. Hence, 
$\mbox{\textrm P}(k)_{\textrm{nc}}$ also has noncommutative extreme points at every finite graded level.
\end{proof}

%%%%%%%%%%%%%%%%
\subsection{Factorial representations}

If $G$ is a finitely generated discrete group with a prescribed set of generators $\mathfrak u$, and if $\mathcal O_G$
is the operator system spanned by the identity and the elements of $\mathfrak u$ and their adjoints, then a noncommutative
state $\phi:\mathcal O_G\rightarrow\B(\H)$ is \emph{factorial} if the von Neumann algebra $\phi(\mathcal O_G)''$ is a factor.
If $\phi$ extends to a representation $\pi:\cstar(G)\rightarrow\B(\H)$, then we say that $\pi$ is a \emph{factorial representation}.

Theorems \ref{ncd levels}, \ref{bo}, and \ref{irr repn} indicate $\mathcal O_G$ admits factorial representations
of type I${}_n$ and I${}_\infty$, for all $n\in\mathbb N$, if $\mathcal O_G$ is one of $\mbox{\textrm{NC}}(d)$ or $\mbox{\textrm{NCP}}(k)$, with $d,k\geq3$, .
However, these operator systems also admit factorial representations of type II${}_1$ and $\mbox{\textrm II}_\infty$.

\begin{theorem}
The operator systems $\mbox{\textrm{NC}}(d)$ and $\mbox{\textrm{NCP}}(k)$, for $d,k\geq3$, admit factorial representations of type $\mbox{\textrm II}_1$
and $\mbox{\textrm II}_\infty$.
\end{theorem}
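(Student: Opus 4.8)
The plan is to exhibit type $\mathrm{II}_1$ factorial representations of $\cstar(\mathbb Z_3*\mathbb Z_2)$ and $\cstar(*_1^d\mathbb Z_2)$, restrict them to the operator systems $\mbox{\rm NCP}(k)$ and $\mbox{\rm NC}(d)$, and then produce the $\mathrm{II}_\infty$ case by amplification. The natural source of $\mathrm{II}_1$ factors here is the free-group von Neumann algebra: since $\mathbb Z_2*\mathbb Z_2*\mathbb Z_2$ and $\mathbb Z_3*\mathbb Z_2$ contain free subgroups, and more directly since these free products are themselves ICC (infinite conjugacy class) groups, the von Neumann algebra generated by the left-regular representation $\lambda$ is a $\mathrm{II}_1$ factor. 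I would verify the ICC property group-theoretically (a nontrivial reduced word in a free product of this type has infinitely many distinct conjugates), which gives that $\lambda(\cstar(G))'' = L(G)$ is a $\mathrm{II}_1$ factor.

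First I would let $\pi=\lambda$ be the left-regular representation of $G$ (where $G=\mathbb Z_3*\mathbb Z_2$ or $*_1^d\mathbb Z_2$) acting on $\ell^2(G)$, and set $\phi=\lambda_{\vert\mathcal O_G}$. The key point to check is that the weak-operator closure $\phi(\mathcal O_G)''$ coincides with $\lambda(G)''=L(G)$. This holds because the generators $u\in\mathfrak u$ lie in $\mathcal O_G$, so the von Neumann algebra they generate is all of $L(G)$; the unitaries $\lambda(u)$ generate $G$ as a group and hence generate $L(G)$ as a von Neumann algebra. Thus $\phi(\mathcal O_G)''=L(G)$, which is a $\mathrm{II}_1$ factor by the ICC property, establishing that $\phi$ is a factorial representation of type $\mathrm{II}_1$.

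For the type $\mathrm{II}_\infty$ assertion, I would pass to an infinite amplification. Concretely, consider $\pi\otimes 1$ acting on $\ell^2(G)\otimes\ell^2(\mathbb N)$, or equivalently tensor the representation with a copy of $\B(\ell^2(\mathbb N))$; the resulting von Neumann algebra generated is $L(G)\,\overline{\otimes}\,\B(\ell^2(\mathbb N))$, which is a factor of type $\mathrm{II}_\infty$ precisely because $L(G)$ is type $\mathrm{II}_1$ and $\B(\ell^2(\mathbb N))$ is type $\mathrm{I}_\infty$. Restricting this amplified representation to $\mathcal O_G$ gives a factorial representation of type $\mathrm{II}_\infty$. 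One must confirm that the amplified images of the generators still generate the full tensor-product von Neumann algebra; this follows since the commutant of $\{\lambda(u)\otimes 1\}$ is $L(G)'\,\overline{\otimes}\,\B(\ell^2(\mathbb N))$, whose intersection with the algebra generated gives the correct factor.

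\emph{The hard part} is not the algebra theory but verifying that these von Neumann algebras are genuinely of type $\mathrm{II}$ rather than type $\mathrm{I}$ or hyperfinite type $\mathrm{II}_1$ in a way that matters — though for the present statement only the type classification ($\mathrm{II}_1$ versus $\mathrm{II}_\infty$) is needed, so it suffices to confirm the ICC property, which is the crux. I would expect the main obstacle to be giving a clean proof that $G$ is ICC: one shows that any nontrivial element $g\in G$, written as a reduced word in the free product, admits infinitely many conjugates by taking conjugates $h^ngh^{-n}$ for a suitable element $h$ of infinite order (such $h$ exists since these free products contain copies of $\mathbb Z$, e.g. the product of the two distinct torsion generators has infinite order). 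Once ICC is in hand, the factor type classification is standard, and the restriction to $\mathcal O_G$ and the amplification are routine.
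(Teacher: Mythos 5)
Your type $\mathrm{II}_1$ argument is correct and is essentially the paper's own: the groups $*_1^d\mathbb Z_2$ ($d\geq3$) and $\mathbb Z_k*\mathbb Z_2$ ($k\geq3$) are ICC, so the left regular representation generates the $\mathrm{II}_1$ factor $L(G)$, and since the group generators already lie in $\mathcal O_G$, the restriction of $\lambda$ to $\mathcal O_G$ generates all of $L(G)$.

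The $\mathrm{II}_\infty$ step, however, contains a genuine error. The amplification $\pi\otimes 1$ acting on $\ell^2(G)\otimes\ell^2(\mathbb N)$ does \emph{not} generate $L(G)\,\overline{\otimes}\,\B(\ell^2(\mathbb N))$: the von Neumann algebra generated by $\{\lambda(u)\otimes 1\}$ is $L(G)\,\overline{\otimes}\,\mathbb C 1$, whose commutant is $L(G)'\,\overline{\otimes}\,\B(\ell^2(\mathbb N))$ by the commutation theorem, and taking the commutant once more returns $L(G)\,\overline{\otimes}\,\mathbb C 1$. This algebra is $*$-isomorphic to $L(G)$ and hence is still a factor of type $\mathrm{II}_1$; the type is an algebraic invariant of $\phi(\mathcal O_G)''$ and is unchanged by passing to a multiple of the representation (only the commutant changes). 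So your construction produces another $\mathrm{II}_1$ factorial representation, not a $\mathrm{II}_\infty$ one. The paper instead tensors \emph{two representations of the group}, $g\mapsto\pi_1(g)\otimes\pi_2(g)$, where $\pi_1$ is the $\mathrm{II}_1$ factorial representation above and $\pi_2$ is an infinite-dimensional irreducible (type $\mathrm{I}_\infty$) representation, which exists by the primitivity results of B\'edos and Omland invoked in Theorem \ref{bo}. Because $\pi_2(g)$ varies with $g$, the generated von Neumann algebra is genuinely larger than an amplification, and one checks it is a semifinite but non-finite factor, i.e., of type $\mathrm{II}_\infty$. Some construction of this kind is required; the identity amplification cannot work.
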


\begin{proof} Because $\mathbb Z_k*\mathbb Z_2$, where $k\geq3$, and the free products of $d\geq3$ copies of $\mathbb Z_2$
are infinite conjugacy class groups, their group von Neumann algebras are 
$\mbox{\textrm II}_1$ factors. Hence, the unitary generators of these
group von Neumann algebras induce factorial representations from the corresponding group C$^*$-algebra, thereby yielding type-$\mbox{\textrm II}_1$
factorial representations of $\mbox{\textrm{NC}}(d)$ and $\mbox{\textrm{NCP}}(k)$, for $d,k\geq3$. Factorial representations of type $\mbox{\textrm II}_\infty$
are obtained through tensor products of type $\mbox{\textrm II}_1$ and $\mbox{\textrm I}_\infty$ factorial representations.
\end{proof}

On a related note, representations of $\mathbb Z_k* \mathbb Z_2$ in their
reduced group C$^*$-algebras were considered in \cite{paschke--salinas1979}.

%%%%%%%%%%%%%%%%%%%%%%%%%%%%%%%%%%%%%%%%%%%%%%%%%%%
\section{Quotients, Duality, and Automatic Complete Positivity}
%%%%%%%%%%%%%%%%%%%%%%%%%%%%%%%%%%%%%%%%%%%%%%%%%%%

%%%%%%%%%%%%
\subsection{Representations as operator system quotients}

If $\oss$ and $\ost$ are operator systems and $\psi:\oss\rightarrow\ost$ is a ucp map, then the quotient vector space
$\oss/\ker\psi$ has the structure of an operator system \cite{kavruk--paulsen--todorov--tomforde2013}. Specifically, if $\jay=\ker\psi$
and denoting the coset of $x\in\oss$ in $\oss/\jay$ by $\dot{x}$, then 
a matrix $\dot{X}\in \M_p(\oss/\jay)$ is positive in the operator system $\oss/\jay$ if, for every $\epsilon>0$, there exists a selfadjoint
matrix $K_\epsilon\in \M_p(\jay)$ such that 
\[
\epsilon (e_\oss\otimes 1_p) + X+K_\epsilon \in \M_p(\oss)_+,
\]
and the canonical Archimedean order unit for $\oss/\jay$ is given by $\dot{e}_\oss$, where $e_\oss\in\oss$ is the Archimedean order unit for $\oss$.
If the induced ucp map 
 $\dot{\psi}:\oss/\jay\rightarrow\ost$, defined by $\dot{\psi}(\dot{x})=\psi(x)$, is a complete order isomorphism.
In this situation, $\psi$ is said to be a \emph{complete quotient map} and $\ost\cong_{\textrm{OpSys}}\oss/\jay$.

A typical use of such a complete quotient maps $\psi:\oss\rightarrow\ost$ is to pull back questions about positivity from the quotient $\oss/\jay$, where
$\jay=\ker\psi$,
to the source operator system $\oss$; in particular,
a matrix $Y\in \M_q(\oss/\jay)$ is strictly positive if and only if there is a strictly positive matrix $X\in \M_q(\oss)$
such that $Y=\dot{X}$.

The noncommutative cubes $\mbox{\textrm{NC}}(d)$ were shown, for all $d\geq2$, 
to be operator system quotients of finite-dimensional commutative C$^*$-algebras
in \cite[\S5]{farenick--kavruk--paulsen--todorov2018}. The methods used in that paper apply directly to the case of  
noncommutative prisms.  

\begin{theorem}\label{quo-thm}{\textrm (\cite[Corollary 5.3]{farenick--kavruk--paulsen--todorov2018})}
For each $k\geq 3$, there is a unital complete quotient map $\psi_{k}:\mathbb C^k \oplus \mathbb C^2 \rightarrow \mbox{\textrm{NCP}}(k)$ such that
\[
\mbox{\textrm{NCP}}(k) \cong_{\textrm{OpSys}} (\mathbb C^k \oplus \mathbb C^2) / \ker\psi_k .
\]
\end{theorem}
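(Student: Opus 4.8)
The plan is to exhibit $\psi_k$ as the average of the two canonical unital inclusions and then to show that the induced map on the quotient is a complete order isomorphism. I identify $\mathbb C^k$ with $\cstar(\mathbb Z_k)$ and $\mathbb C^2$ with $\cstar(\mathbb Z_2)$, and let $\alpha:\mathbb C^k\to\cstar(\mathbb Z_k*\mathbb Z_2)$ and $\beta:\mathbb C^2\to\cstar(\mathbb Z_k*\mathbb Z_2)$ be the unital $*$-homomorphisms arising from the canonical embeddings of $\cstar(\mathbb Z_k)$ and $\cstar(\mathbb Z_2)$ as unital C$^*$-subalgebras of the free product, so that $\alpha$ has range $\mbox{\rm Span}\{1,w,\dots,w^{k-1}\}$ and $\beta$ has range $\mbox{\rm Span}\{1,v\}$. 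I would define $\psi_k(a\oplus b)=\tfrac12\bigl(\alpha(a)+\beta(b)\bigr)$. Since $\alpha(1)=\beta(1)=1$, this map is unital; since it is the convex combination $\tfrac12(\alpha\circ\pi_1)+\tfrac12(\beta\circ\pi_2)$ of the completely positive maps obtained from the coordinate $*$-homomorphisms $\pi_1,\pi_2$ of $\mathbb C^k\oplus\mathbb C^2$, it is completely positive. Its range is $\mbox{\rm Span}\{1,w,\dots,w^{k-1}\}+\mbox{\rm Span}\{1,v\}=\mbox{\rm NCP}(k)$, so $\psi_k$ is a surjective ucp map onto $\mbox{\rm NCP}(k)$.

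Next I would identify the kernel. Since $1,w,\dots,w^{k-1},v$ are linearly independent, the ranges of $\alpha$ and $\beta$ meet only in $\mathbb C\cdot 1$, so $\psi_k(a\oplus b)=0$ forces $a=t1_{\mathbb C^k}$ and $b=-t1_{\mathbb C^2}$ for a scalar $t$; hence $\ker\psi_k=\mathbb C\cdot(1\oplus(-1))$ is one-dimensional, in agreement with $\dim(\mathbb C^k\oplus\mathbb C^2)-\dim\mbox{\rm NCP}(k)=1$. Thus the induced ucp map $\dot\psi_k:(\mathbb C^k\oplus\mathbb C^2)/\ker\psi_k\to\mbox{\rm NCP}(k)$ is a bijection, and the content of the theorem is that $\dot\psi_k$ is a complete order isomorphism, i.e.\ that $\psi_k$ is a complete quotient map.

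By the definition of the quotient matrix cones, this reduces to a lifting problem: for every $n$, every $X\in M_n(\mbox{\rm NCP}(k))_+$, and every $\varepsilon>0$, I must produce $Y\in M_n(\mathbb C^k\oplus\mathbb C^2)_+$ with $\psi_k^{(n)}(Y)=X+\varepsilon(I_n\otimes 1)$. Writing $X=\sum_{m=0}^{k-1}C_m\otimes w^m+D\otimes v$ and matching coefficients forces $Y=A\oplus B$ in which every Fourier coefficient of $A$ except the constant term, together with the $v$-coefficient of $B$, is determined by $X$; the one-dimensional kernel leaves a single free self-adjoint parameter $D_0\in M_n$ splitting the coefficient of $1$. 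The requirements $A\ge 0$ and $B\ge 0$ then read $D_0\le 2P_j$ for every $j$ and $\pm 2D\le D_0$, where $P_j=C_0+\sum_{m\ge 1}C_m\omega^{jm}$ and $\omega$ is a primitive $k$-th root of unity. Applying to $X$ the two $*$-homomorphisms $\cstar(\mathbb Z_k*\mathbb Z_2)\to\cstar(\mathbb Z_k)$ that fix $w$ and send $v\mapsto\pm 1$ gives $P_j\pm D\ge 0$ for all $j$; at the scalar level $n=1$ these inequalities already sandwich a suitable $D_0$, which proves $\dot\psi_k$ is an order isomorphism.

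The hard part is the passage to higher matrix levels. For noncommuting operators, $P_j\ge D$ and $P_j\ge -D$ do \emph{not} imply $P_j\ge\lvert D\rvert$, so a single $D_0$ dominating both $2D$ and $-2D$ yet lying below every $2P_j$ need not exist from the abelian constraints alone, and the $\varepsilon$-slack does not repair this. Overcoming it requires the full positivity of $X$ in the free product, and the way to encode it — the method of \cite[\S5]{farenick--kavruk--paulsen--todorov2018} — is to recognise $\psi_k$ as the canonical map exhibiting $\mbox{\rm NCP}(k)$ as the operator system coproduct (unital direct sum) $\mathbb C^k\oplus_1\mathbb C^2$, realised as the span $\cstar(\mathbb Z_k)+\cstar(\mathbb Z_2)$ inside $\cstar(\mathbb Z_k*\mathbb Z_2)=\cstar(\mathbb Z_k)*\cstar(\mathbb Z_2)$. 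The coproduct of two operator systems is, by the structure theorem for unital direct sums, completely order isomorphic to the quotient of $\mathbb C^k\oplus\mathbb C^2$ by the unit-difference line $\ker\psi_k$, and this coproduct embeds completely order isomorphically into the C$^*$-free product; comparing matrix cones then identifies $(\mathbb C^k\oplus\mathbb C^2)/\ker\psi_k$ with $\mbox{\rm NCP}(k)$ and completes the argument.
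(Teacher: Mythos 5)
Your proof is correct and is essentially the argument the paper intends: the paper gives no independent proof, deferring to \cite[\S5]{farenick--kavruk--paulsen--todorov2018}, and the method there is precisely your identification of $\mbox{\rm NCP}(k)$ with the operator system coproduct $\mathbb C^k\oplus_1\mathbb C^2$ sitting completely order isomorphically inside the unital free product $\cstar(\mathbb Z_k)\ast_1\cstar(\mathbb Z_2)=\cstar(\mathbb Z_k\ast\mathbb Z_2)$, combined with the realisation of the coproduct as the quotient of $\mathbb C^k\oplus\mathbb C^2$ by the unit-difference line. Your kernel computation agrees with the one the paper records later (in the proof of the duality theorem for $\mbox{\rm NCP}(k)^\delta$), and you correctly flag that the naive coefficient-matching argument fails at higher matrix levels, which is exactly why the coproduct/free-product machinery is required.
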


A second typical use is to employ complete quotient maps to analyse the operator system dual of a finite-dimensional operator system, which is our next goal.

If $\osr$ is a finite-dimensional operator system, then its dual space $\osr^\delta$ 
carries the structure of an operator system \cite[Corollary 4.5]{choi--effros1977}
by declaring a matrix $X=[f_{ij}]_{i,j=1}^q$ to be positive if the associated linear 
map $\hat X:\osr\rightarrow\M_q(\mathbb C)$, where
$\hat X(x)=[f_{ij}(x)]_{i,j=1}^q$,
is completely positive, and designating an Archimedean order unit for $\osr^\delta$ to be given 
by a faithful state on $\osr$.  Hence, 
in this regard, there is no canonical choice of an Archimedean order unit for $\osr^\delta$ in terms of the Archimedean order unit for $\osr$.

\begin{lemma}\label{dual_C^d} $(\mathbb C^d)^\delta\cong_{\textrm{OpSys}}\mathbb C^d$ with respect to  
	the Archimedean order unit of the operator system dual $(\mathbb C^d)^\delta$ 
	given by the faithful positive linear functional $\varphi:\mathbb C^d\rightarrow\mathbb C$ defined by 
	\[
	\varphi(\xi)=\sum_{j=1}^d \xi_j,\mbox{ for }\xi\in\mathbb C^d.
	\]
\end{lemma}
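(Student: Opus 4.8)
The plan is to write down the obvious linear isomorphism carrying a functional to its values on the minimal projections of $\mathbb C^d$, and then to verify that it transports the two matrix orderings into each other and matches the order units. Concretely, I would define
\[
\Psi:(\mathbb C^d)^\delta\longrightarrow\mathbb C^d,\qquad \Psi(f)=\bigl(f(e_1),\dots,f(e_d)\bigr),
\]
where $e_1,\dots,e_d$ is the standard basis of $\mathbb C^d$. Since a linear functional on $\mathbb C^d$ is determined by its values on the $e_k$, the map $\Psi$ is a linear isomorphism of vector spaces; it remains only to promote this to a unital complete order isomorphism.

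Next I would unravel the matrix ordering on the dual. Given $X=[f_{ij}]_{i,j=1}^q\in\M_q\bigl((\mathbb C^d)^\delta\bigr)$, by definition $X$ is positive precisely when the associated map $\hat X:\mathbb C^d\to\M_q(\mathbb C)$, with $\hat X(\xi)=[f_{ij}(\xi)]_{i,j=1}^q$, is completely positive. The one piece of genuine content is the standard fact that, because the domain $\mathbb C^d$ is an abelian C$^*$-algebra, complete positivity of $\hat X$ is equivalent to positivity, which in turn is tested on the minimal projections: $\hat X$ is completely positive if and only if $\hat X(e_k)\geq 0$ in $\M_q(\mathbb C)$ for each $k$. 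Under the canonical identification $\M_q(\mathbb C^d)=\bigoplus_{k=1}^d\M_q(\mathbb C)$, an element is positive exactly when each of its $d$ components is positive, and the $q$-fold amplification $\Psi^{(q)}$ sends $X$ to the tuple $\bigl(\hat X(e_1),\dots,\hat X(e_d)\bigr)$. Comparing the two positivity descriptions shows that $\Psi^{(q)}$ both preserves and reflects positivity for every $q$, so $\Psi$ is a complete order isomorphism.

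Finally I would match the Archimedean order units. As recalled before the statement, any faithful state on $\mathbb C^d$ serves as an Archimedean order unit for $(\mathbb C^d)^\delta$, and $f_\gamma(\xi)=\langle\xi,\gamma\rangle=\sum_{j=1}^d\xi_j$ is indeed faithful, being strictly positive on every nonzero positive element; in finite dimensions the positive cone is closed, so this order unit is automatically Archimedean. Since $f_\gamma(e_k)=1$ for every $k$, we obtain $\Psi(f_\gamma)=\gamma$, the standard order unit of $\mathbb C^d$, so $\Psi$ is unital. Thus $\Psi$ is a unital complete order isomorphism, giving $(\mathbb C^d)^\delta\cong_{\sf OpSys}\mathbb C^d$.

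The only real obstacle is the reduction \emph{completely positive $\iff$ positive $\iff$ positive on the minimal projections $e_k$} for maps out of the commutative algebra $\mathbb C^d$; everything else is bookkeeping with the fixed basis. I would therefore isolate and cite this well-known automatic-complete-positivity fact for abelian domains, after which the identification of the matrix cones and the order units is immediate.
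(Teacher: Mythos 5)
Your proof is correct and follows essentially the same route as the paper: both arguments identify positivity of $X\in\M_q((\mathbb C^d)^\delta)$ with complete positivity of $\hat X$, reduce this to positivity via the abelian domain, test positivity on the minimal projections $e_k$, identify $\M_q(\mathbb C^d)$ with $\bigoplus_{k=1}^d\M_q(\mathbb C)$, and check that $f_\gamma$ corresponds to the order unit $\gamma=(1,\dots,1)$. The only difference is presentational: you make the isomorphism $\Psi$ and its amplifications explicit, which is a slight gain in clarity over the paper's version.
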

\begin{proof}
	Fix $q\in\mathbb N$. A matrix $X=[f_{ij}]_{i,j=1}^q\in\mathcal M_q(\mathbb C^d)$ is positive if and only if the linear map 
	$\hat{X} : \mathbb C^d\rightarrow \mathcal M_q(\mathbb C)$ given by $\hat{X}(\xi)=[f_{ij}(\xi)]_{i,j=1}^q$ is completely positive. Since $\mathbb C^d$ is an
	abelian C$^\ast$-algebra, $X$ is
 positive if and only if $\hat{X}$ is a positive map. Because the canonical basis $\{e_j\}_{j=1}^d$ determines the positive cone $(\mathbb C^d)_+$, 
 the positivity of $X$ is equivalent to the positivity of $\hat{X}(e_j)$ for all $1\leq j\leq d$. 
 Hence, there is a linear order isomorphism 
 \[
 \M_q((\mathbb C^d)^\delta)\cong_{\textrm ord}\M_q(\mathbb C)\times\dots\times\M_q(\mathbb C)= 
 \left(\displaystyle\bigoplus_1^d\M_q(\mathbb C)\right). 
 \]
 The canonical isomorphisms that identify $\displaystyle\bigoplus_1^d\M_q(\mathbb C)$, 
$\M_q(\mathbb C)\otimes\mathbb C^d$, and $\M_q(\mathbb C^d)$ are positivity preserving, and
so the linear order isomorphism above identifies the positive cones $(\M_q(\mathbb C^d)^\delta)_+$ and $(\M_q(\mathbb C^d))_+$. 	
Hence, $(\mathbb C^d)^\delta$ is completely order isomorphic to $\mathbb C^d$. To make this a unital isomorphism, let $\varphi\in(\mathbb C^d)^\delta$
be the faithful state $\varphi(\xi)=\sum_{j=1}^d \xi_j$, for $\xi\in\mathbb C^d$, and note the identification of $(\mathbb C^d)^\delta$ and $\mathbb C^d$ identifies the
faithful state $\varphi$ with the canonical identity of the C$^*$-algebra $\mathbb C^d$.
\end{proof}

The operator system dual of $\mbox{\textrm{NCP}}(k)$ is determined in the following result.

\begin{theorem}\label{dual ncp(k)} For every $k\geq 3$ and
an appropriate choice of Archimedean order units, we have
\[
\mbox{\textrm{NCP}}(k)^\delta\cong_{\textrm{OpSys}} \oss_{k,2},
\]
where $\oss_{k,2}\subseteq\mathbb C^{k+2}$ is the operator system of all tuples
$\zeta=(z_1,\dots,z_k,z_{k+1},z_{k+2})$ for which
\[
z_1 + \cdots + z_k = z_{k+1} + z_{k+2},
\]
where the matrix order on $\oss_{k,2}$ is induced by the matrix order on the abelian C$^*$-algebra $\mathbb C^{k+2}$.
\end{theorem}

\begin{proof} By
\cite[Theorem 5.1]{farenick--kavruk--paulsen--todorov2018}, 
there exists a unital linear map 
$\psi_k:\mathbb C^k\oplus\mathbb C^2\rightarrow\mbox{\textrm{NCP}}(k)$ 
with null space
\[
\ker\psi_k=\left\{\lambda\left(\sum_{j=1}^k e_j - \sum_{\ell=1}^2 e_{k+\ell}\right)\,|\,\lambda\in\mathbb C\right\}
\]
such that, as a map of operator systems, $\psi_k$ is a complete quotient map. 
From the definition of $\psi_k$ given in \cite[Theorem 5.1]{farenick--kavruk--paulsen--todorov2018}, the linear transformation
$\psi_k^\delta$ has range $\oss_{k,2}$; and, as a map of 
finite-dimensional operator systems, $\psi_k^\delta$ is a complete order embedding \cite[Proposition 1.15]{farenick--paulsen2012}. Therefore, all
that remains is the selection of Archimedean order units so that $\psi_k^\delta$ is a unital map.

To this end,
consider the faithful positive linear functional $f_\eta\in(\mathbb C^k\oplus\mathbb C^2)^\delta$ induced 
by the vector 
\[
\eta=\frac{1}{k}\sum_{j=1}^k e_j + \frac{1}{2}\sum_{j=1}^2 e_{j+k},
\]
so that $f_\eta$ is given by $f_\eta(\xi)=\displaystyle\sum_{\ell=1}^{k+2}\eta_\ell\xi_\ell$, for $\xi\in \mathbb C^k\oplus\mathbb C^2$. Because $f_\eta$ is faithful, it serves as an
Archimedean order unit for the operator system dual $(\mathbb C^k\oplus\mathbb C^2)^\delta$. The vector $\eta$ is an element of $\eta\in \oss_{k,2}$, 
which is the range of the complete order embedding $\psi_k^\delta$. Therefore, formally
$f_\eta=\psi^\delta(\varphi)$ for a unique $\varphi\in\mbox{\textrm{NCP}}(k)^\delta$. Further, because $f_\eta$ is an Archimedean order unit if and only if $\varphi$ is, 
we designate $\varphi$ as the Archimedean order unit for $\mbox{\textrm{NCP}}(k)^\delta$, which yields $\psi_k^\delta$ as a unital complete order embedding.
\end{proof}

%%%%%%%%%%%%%%%%%%%%%%
\subsection{Complete positivity of positive linear maps}

Recall that the maximal operator system (OMAX) structure on an operator system $\osr$ is the operator system $\osr^{\textrm{max}}$ in which
the $\osr^{\textrm{max}}=\osr$ as sets, the identity map $\osr^{\textrm{max}}\rightarrow\osr$ is completely positive, and for every operator system 
$\ost $ and every unital positive linear
map $\phi:\osr^{\textrm{max}}\rightarrow\ost$, the map $\phi$ is necessarily completely positive.

\begin{theorem}\label{omax} For every $k\geq 3$, we have
$\mbox{\textrm{NCP}}(k)=\mbox{\textrm{NCP}}(k)^{\textrm{max}}$.
\end{theorem}

\begin{proof} The operator systems $\mathcal O_w$ and $\mathcal O_v$ generated by the canonical generators $w$ and $v$ in 
$\cstar(\mathbb Z_k*\mathbb Z_2)$ are the abelian C$^*$-algebras
 $\mathcal O_w=\cstar(\mathbb Z_k)\simeq\mathbb C^k$ and $\mathcal O_2=\cstar(\mathbb Z_2)\simeq\mathbb C^2$.
As noted in \cite[\S5]{farenick--kavruk--paulsen--todorov2018}, $\mbox{\textrm{NCP}}(k)\cong_{\textrm{OpSys}}\cstar(\mathbb Z_k)\oplus_1 \cstar(\mathbb Z_2)$,
the operator system coproduct of the operator systems $\cstar(\mathbb Z_k)$ and 
$\cstar(\mathbb Z_2)$.
By the universal property of operator system coproducts \cite{farenick--kavruk--paulsen--todorov2018},
if $\iota_k$ and $\iota_2$ denote the canonical embeddings of 
$\cstar(\mathbb Z_k)$ and 
$\cstar(\mathbb Z_2)$ into $\cstar(\mathbb Z_k)\oplus_1 \cstar(\mathbb Z_2)$, 
then for every operator system $\ost$ and ucp maps $\phi_k:\cstar(\mathbb Z_k)\rightarrow\ost$ and $\phi_2:\cstar(\mathbb Z_2)\rightarrow\ost$,
there is a unique ucp map $\phi:\cstar(\mathbb Z_k)\oplus_1 \cstar(\mathbb Z_2)\rightarrow\ost$ such that $\phi_k=\phi\circ\iota_k$ and 
$\phi_2=\phi\circ\iota_2$.

Suppose now that $\psi:\cstar(\mathbb Z_k)\oplus_1 \cstar(\mathbb Z_2)\rightarrow\ost$ is a unital positive linear map. Then $\psi$ induces two positive linear maps
$\phi_k:\cstar(\mathbb Z_k)\rightarrow\ost$ and $\phi_2:\cstar(\mathbb Z_2)\rightarrow\ost$ via $\phi_k=\psi\circ\iota_k$ and 
$\phi_2=\psi\circ\iota_2$. Because $\phi_k$ and $\phi_2$ are positive linear maps of abelian C$^*$-algebras, they are completely
positive. Thus, by the universal property of the coproduct, the map $\psi$ is necessarily completely positive; hence,
$\cstar(\mathbb Z_k)\oplus_1 \cstar(\mathbb Z_2)=(\cstar(\mathbb Z_k)\oplus_1 \cstar(\mathbb Z_2))^{\textrm{max}}$.
\end{proof}

%%%%%%%%%%%%%%%%%%%%%%%%%%%%%%%%%%%%%%%%%%%%%%%%%%%
\section{Tensor Products}
%%%%%%%%%%%%%%%%%%%%%%%%%%%%%%%%%%%%%%%%%%%%%%

The theory of operator system tensor products \cite{kavruk--paulsen--todorov--tomforde2011}
is inspired by the corresponding theory for C$^*$-algebras, although there are some striking differences between the theories. 

Henceforth, if
$\osr_1$ and $\ost_1$ are operator subsystems of $\osr_2$ and $\ost_2$, respectively, and if $\otimes_\sigma$ and $\otimes_\tau$ are
operator system tensor products, then the notation 
\[
\osr_1\otimes_\sigma\ost_1 \,\subseteq\, \osr_2\otimes_\tau\ost_2
\]
indicates that the canonical algebraic embedding $\iota:\osr_1\otimes\ost_1\rightarrow\osr_2\otimes\ost_2$ of algebraic tensor product spaces is 
a completely positive linear map $\osr_1\otimes_\sigma\ost_1 \rightarrow \osr_2\otimes_\tau\ost_2$.
The stronger notation, 
\[
\osr_1\otimes_\sigma\ost_1 \,\subseteq_{\textrm{coi}}\, \osr_2\otimes_\tau\ost_2,
\]
indicates that the canonical algebraic embedding $\iota:\osr_1\otimes\ost_1\rightarrow\osr_2\otimes\ost_2$ of algebraic tensor product spaces is 
a complete order embedding $\osr_1\otimes_\sigma\ost_1 \rightarrow \osr_2\otimes_\tau\ost_2$. In this latter setting, if a
matrix $X\in\M_q(\osr_1\otimes_\sigma\ost_1)$ is positive in $\M_q(\osr_2\otimes_\tau\ost_2)$, then it is also positive in $\M_q(\osr_1\otimes_\sigma\ost_1)$.
Thus, for two operator system tensor product structures $\otimes_\sigma$ and $\otimes_\tau$, we write
$\osr\otimes_\sigma\ost= \osr\otimes_\tau\ost$ if
\[
\osr\otimes_\sigma\ost\,\subseteq_{\textrm{coi}}\, \osr\otimes_\tau\ost
\,\mbox{ and }\,
\osr\otimes_\tau\ost\,\subseteq_{\textrm{coi}}\, \osr\otimes_\sigma\ost.
\]

Recall from \cite{kavruk--paulsen--todorov--tomforde2011} that
there exist minimal and maximal operator system tensor product structures $\omin$ and $\omax$ 
such that, for any operator system tensor product structure
 $\otimes_\sigma$, and any operator systems $\osr$ and $\ost$, we have
 \[
 \osr\omax\ost\,\subseteq\,\osr\otimes_\sigma\ost\,\subseteq\,\osr\omin\ost.
 \]
 The main result of this section shows that, if $\osr,\ost\in\{\mbox{\textrm{NC}}(d),\mbox{\textrm{NCP}}(k)\,|\, d,k\geq 2\}$, then
 \[
 \osr\omin\ost \not= \osr\omax\ost.
 \]
 The distinction between $\osr\omin\ost$ and $\osr\omax\ost$ above 
 is known if both $\osr$ and $\ost$ are noncommutative cubes \cite{farenick--kavruk--paulsen--todorov2014}, and so the
 novelty here occurs when one or both of the tensor factors is $\mbox{\textrm{NCP}}(k)$. To show this, it is useful to use C$^*$-algebra tensor products.
 
 If $\A$ and $\B$ are unital C$^*$-algebras, then the operator system tensor products $\A\omin\B$ and $\A\omax\B$ of $\A$ and $\B$
 are closely related to the minimal and maximal C$^*$-algebra tensor products, which have the same notation, in which the norm completions of
 the operator systems $\A\omin\B$ and $\A\omax\B$ result in the C$^*$-algebras $\A\omin\B$ and $\A\omax\B$. Therefore, in what follows,
 the notation $\A\omin\B$ and $\A\omax\B$ always refers to the C$^*$-algebra tensor products.
 
As in most works involving operator system tensor products,
the commuting operator system tensor product structure, denoted by $\oc$,
has a substantial role here. Recall that a matrix $X\in M_q(\osr\oc\ost)$ is positive, if for all ucp maps
$\phi:\osr\rightarrow\B(\H)$ and $\psi:\ost\rightarrow\B(\H)$ with commuting ranges, the matrix $(\phi\cdot\psi)^{[q]}(X)$ is positive in 
$\M_q(\B(\H))$, where $\phi\cdot\psi$ is the linear map on $\osr\otimes\ost\rightarrow\B(\H)$ satisfying $(\phi\cdot\psi)(x\otimes y)=\phi(x)\psi(y)$, for all
$x\in\osr$ and $y\in\ost$.  An important feature of this tensor product is that $\osr\oc\A=\osr\omax\A$, whenever $\A$ is a unital C$^*$-algebra.
The determination of $\osr\oc\ost$ is often difficult; as shown in \cite{farenick--kavruk--paulsen--todorov2018,kavruk2014}, basic questions
about $\oc$ can sometimes be logically equivalent to questions regarding the Connes Embedding Problem.
 
 %%%%%%%%%%%%%%
 \subsection{Some general propositions}
 
 As defined earlier in equation (\ref{e:og}), 
 $\mathcal O_{G_j}$ shall denote the operator subsystem of $\cstar(G_j)$ determined by the unitary generators, and their adjoints, of a fixed finite
 set of generators of a discrete group $G_j$.
 
 The commuting tensor product $\mathcal O_{G_1}\otimes_{\textrm c}\mathcal O_{G_2}$ is induced by the complete
 order embedding 
 \[
 \mathcal O_{G_1}\otimes_{\textrm c}\mathcal O_{G_2} \,\subseteq_{\textrm{coi}}\,\cstar_{\textrm{max}}(\mathcal O_{G_1}) \omax \cstar_{\textrm{max}}(\mathcal O_{G_2}).
 \]
Therefore, the
case in which
\[
\mathcal O_{G_1}\otimes_{\textrm c}\mathcal O_{G_2} \,\subseteq_{\textrm{coi}}\,\cstar_{\textrm{min}}(\mathcal O_{G_1}) \omax \cstar_{\textrm{min}}(\mathcal O_{G_2}) 
\]
is quite special and has strong consequences, as the following proposition shows.

\begin{proposition}\label{(c-max)_unitaries}
	If $\mathcal O_{G_1}\otimes_{\textrm c}\mathcal O_{G_2}\subseteq_{\textrm{coi}}\cstar(G_1)\otimes_{\textrm{max}}\cstar(G_2)$, 
	then $u_1\otimes u_2$ is unitary in $\cstar(G_1)\otimes_{\textrm{max}}\cstar(G_2)$ for each of the finitely-many
	unitary generators $u_j\in\mathcal O_{G_j}$ of $G_j$, and
\[
\cstar_{\textrm{min}}(\mathcal O_{G_1}\otimes_{\textrm c}\mathcal O_{G_2})=\cstar(G_1)\otimes_{\textrm{max}}\cstar(G_2).
\]
\end{proposition}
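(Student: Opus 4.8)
The plan is to recognise the operator system $\mathcal O_{G_1}\oc\mathcal O_{G_2}$, under the stated hypothesis, as an operator system of the form $\mathcal O_G$ for the product group $G=G_1\times G_2$, and then to invoke Theorem \ref{cstar-env}. I would begin by recalling the standard identification $\cstar(G_1)\omax\cstar(G_2)\cong\cstar(G_1\times G_2)$, which holds because a unitary representation of $G_1\times G_2$ is exactly a pair of commuting unitary representations of $G_1$ and $G_2$, and this is precisely the universal property defining $\omax$ on full group C$^*$-algebras. Under this identification every tensor $u_1\otimes u_2$ of canonical generators $u_1\in\mathfrak u_1$ and $u_2\in\mathfrak u_2$ is the product of the unitaries $u_1\otimes 1$ and $1\otimes u_2$, hence is itself a unitary in $\cstar(G_1)\omax\cstar(G_2)$; this is the first assertion of the proposition, and it is what allows the products $u_1\otimes u_2$ to serve as group generators below.

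Next I would identify, as a subspace of $\cstar(G_1)\omax\cstar(G_2)$, the image of the algebraic tensor product $\mathcal O_{G_1}\otimes\mathcal O_{G_2}$, which is spanned by the products $a\otimes b$ with $a\in\{1,u_1,u_1^*\}$ and $b\in\{1,u_2,u_2^*\}$. Setting
\[
\mathfrak w=\{u_1\otimes 1\}\cup\{1\otimes u_2\}\cup\{u_1\otimes u_2\}\cup\{u_1\otimes u_2^*\},
\]
with $u_1$ and $u_2$ ranging over $\mathfrak u_1$ and $\mathfrak u_2$, the set $\{w,w^*\mid w\in\mathfrak w\}$ reproduces exactly the nonidentity unitaries occurring above: indeed $(u_1\otimes u_2)^*=u_1^*\otimes u_2^*$ and $(u_1\otimes u_2^*)^*=u_1^*\otimes u_2$, so that $\{w,w^*\mid w\in\mathfrak w\}$ contains all four of $u_1\otimes u_2$, $u_1^*\otimes u_2$, $u_1\otimes u_2^*$, $u_1^*\otimes u_2^*$, while $u_1\otimes 1$, $u_1^*\otimes 1$, $1\otimes u_2$, $1\otimes u_2^*$ supply the remaining products. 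Thus $\mathfrak w$ is a finite set of unitaries that generates $\cstar(G_1)\omax\cstar(G_2)\cong\cstar(G_1\times G_2)$, and the operator subsystem $\mathcal O_{G_1\times G_2}=\mbox{\rm Span}\{1,w,w^*\mid w\in\mathfrak w\}$ coincides, as a subspace, with the image of $\mathcal O_{G_1}\otimes\mathcal O_{G_2}$.

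The decisive point is that the hypothesis $\mathcal O_{G_1}\oc\mathcal O_{G_2}\coisubset\cstar(G_1)\omax\cstar(G_2)$ asserts precisely that the canonical algebraic embedding is a complete order embedding; that is, the commuting-tensor-product matrix order on $\mathcal O_{G_1}\otimes\mathcal O_{G_2}$ agrees with the matrix order inherited from $\cstar(G_1)\omax\cstar(G_2)$. Together with the subspace identification of the previous paragraph, this promotes the equality of subspaces to an operator system isomorphism $\mathcal O_{G_1}\oc\mathcal O_{G_2}\cong_{\sf OpSys}\mathcal O_{G_1\times G_2}$, realised by the identity map inside $\cstar(G_1\times G_2)$. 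Applying Theorem \ref{cstar-env} to the finitely generated discrete group $G_1\times G_2$ with generating set $\mathfrak w$ gives $\cstare(\mathcal O_{G_1\times G_2})=\cstar(G_1\times G_2)$, and since the C$^*$-envelope is an invariant of the operator system, generated here by the same copy inside $\cstar(G_1\times G_2)$, I conclude
\[
\cstar_{\rm min}(\mathcal O_{G_1}\oc\mathcal O_{G_2})=\cstare(\mathcal O_{G_1\times G_2})=\cstar(G_1\times G_2)=\cstar(G_1)\omax\cstar(G_2).
\]

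I expect the only real difficulty to be organisational rather than conceptual: one must verify that $\mathfrak w$ is a finite generating set and that $\{w,w^*\mid w\in\mathfrak w\}$ recovers exactly the group elements appearing in $\mathcal O_{G_1}\otimes\mathcal O_{G_2}$, taking care when a generator has order two (so that $u_j^*=u_j$ and several listed elements coincide, leaving $\mathfrak w$ with fewer members). Once this bookkeeping is settled, the hypothesis furnishes exactly the matrix-order identification needed to upgrade the subspace equality to an operator system isomorphism, after which Theorem \ref{cstar-env} carries the argument to its conclusion.
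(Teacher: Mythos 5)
Your argument is correct, and it reaches the conclusion by a genuinely different route from the paper. The paper's proof works entirely inside operator system tensor product theory: it observes that, because $\cstare(\mathcal O_{G_j})=\cstar(G_j)$, the hypothesis says exactly that $\mathcal O_{G_1}\oc\mathcal O_{G_2}=\mathcal O_{G_1}\oess\mathcal O_{G_2}$, and then cites \cite[Theorem 3.5]{farenick--kavruk--paulsen--todorov2014} to obtain both the identification of $\cstare(\mathcal O_{G_1}\oc\mathcal O_{G_2})$ with $\cstar(G_1)\omax\cstar(G_2)$ and the unitarity of $u_1\otimes 1$ and $1\otimes u_2$, followed by \cite[Lemma 3.4]{farenick--kavruk--paulsen--todorov2014} to conclude that the product $u_1\otimes u_2$ is unitary. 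You instead exploit the group structure directly: the identification $\cstar(G_1)\omax\cstar(G_2)\cong\cstar(G_1\times G_2)$, the observation that the image of the algebraic tensor product is precisely $\mathcal O_{G_1\times G_2}$ for the explicit finite generating set $\mathfrak w$ (your bookkeeping with the nine products $a\otimes b$ and the adjoint-closure of $\mathfrak w$ is right, including the degenerate order-two cases), and a direct application of Theorem \ref{cstar-env} to the product group. What your approach buys is self-containedness — it relies only on a result already quoted in the paper rather than on two further external lemmas — and it makes transparent that the first conclusion (unitarity of $u_1\otimes u_2$ as an element of the C$^*$-algebra $\cstar(G_1)\omax\cstar(G_2)$) is automatic and needs no hypothesis; the hypothesis is consumed entirely in upgrading the subspace identification to a unital complete order isomorphism, which is what allows the C$^*$-envelope to be computed on the concrete copy. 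What the paper's route buys is brevity and the fact that \cite[Theorem 3.5]{farenick--kavruk--paulsen--todorov2014} is an equivalence, so the same citation is reused elsewhere (e.g., in the later corollary showing $u_1\otimes 1$ fails to be unitary in $\cstare(\osr_1\omax\osr_2)$), whereas your group-theoretic reduction is specific to the $\oc$-into-$\omax$ situation. One stylistic caution: when you assert the isomorphism $\cstar(G_1)\omax\cstar(G_2)\cong\cstar(G_1\times G_2)$, it is worth noting explicitly that the isomorphism carries $u_1\otimes u_2$ to the group element $(u_1,u_2)$, so that the generating set $\mathfrak w$ really does consist of group unitaries of $G_1\times G_2$; with that said, the application of Theorem \ref{cstar-env} is legitimate and the proof is complete.
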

\begin{proof} As defined in \cite{farenick--kavruk--paulsen--todorov2014},
the operator system tensor product $\otimes_{\textrm ess}$ is given by the inclusion of $\osr\otimes\ost$ into $\cstare(\osr)\omax\cstare(\ost)$.
Since $\cstar(G_j)=\cstare(\mathcal O_{G_j})$, the hypothesis $\mathcal O_{G_1}\otimes_{\textrm c}\mathcal O_{G_2}\subseteq_{\textrm{coi}}\cstar(G_1)\otimes_{\textrm{max}}\cstar(G_2)$ 
implies $\mathcal O_{G_1}\otimes_{\textrm ess}\mathcal O_{G_2}=\mathcal O_{G_1}\otimes_{\textrm c}\mathcal O_{G_2}$, which in turn implies 
$\cstar(G_1)\otimes_{\textrm{max}}\cstar(G_2)=\cstare(\mathcal O_{G_1}\otimes_{\textrm c}\mathcal O_{G_2})$ and
that
$u_1\otimes 1_{G_2}$ and $1_{G_1}\otimes u_2$ are unitaries in $\cstar(G_1)\otimes_{\textrm{max}}\cstar(G_2)$,
for each of the finitely-many
unitary generators $u_j\in\mathcal O_{G_j}$ of $G_j$
\cite[Theorem 3.5]{farenick--kavruk--paulsen--todorov2014}.
Furthermore, by \cite[Lemma 3.4]{farenick--kavruk--paulsen--todorov2014}, $u_1\otimes u_2=(u_1\otimes 1_{G_2})(1_{G_1}\otimes u_2)$, 
and so $u_1\otimes u_2$ is unitary in $\cstar(G_1)\otimes_{\textrm{max}}\cstar(G_2)$. 
\end{proof}

\begin{proposition}\label{rdci-(min,max)}
	If $\mathcal O_{G_j}\otimes_{\textrm c}\mathcal R\subseteq_{\textrm{coi}}\cstar(G_j)\otimes_{\textrm c}\mathcal R$ for all operator systems $\mathcal R$ and $j=1,2$, then $$\mathcal O_{G_1}\otimes_{\textrm{min}}\mathcal O_{G_2}=\mathcal O_{G_1}\otimes_{\textrm c}\mathcal O_{G_2}\implies \cstar(G_1)\otimes_{\textrm{min}}\cstar(G_2)=\cstar(G_1)\otimes_{\textrm{max}}\cstar(G_2).$$
\end{proposition}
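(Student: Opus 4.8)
The plan is to use the standing ``for all $\mathcal R$'' hypothesis to verify the hypothesis of Proposition~\ref{(c-max)_unitaries}, thereby pinning down the C$^*$-envelope of $\mathcal O_{G_1}\oc\mathcal O_{G_2}$, and then to play that C$^*$-envelope off against the canonical quotient from $\omax$ to $\omin$ at the level of C$^*$-algebras. First I would show, using only the standing hypothesis and \emph{not} yet the antecedent of the implication, that $\mathcal O_{G_1}\oc\mathcal O_{G_2}\coisubset\cstar(G_1)\omax\cstar(G_2)$. Applying the hypothesis with $j=1$ and $\mathcal R=\mathcal O_{G_2}$ gives $\mathcal O_{G_1}\oc\mathcal O_{G_2}\coisubset\cstar(G_1)\oc\mathcal O_{G_2}$, and since $\cstar(G_1)$ is a unital C$^*$-algebra this target equals $\cstar(G_1)\omax\mathcal O_{G_2}$. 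Applying the hypothesis with $j=2$ and $\mathcal R=\cstar(G_1)$, together with the symmetry of $\oc$, gives $\cstar(G_1)\oc\mathcal O_{G_2}\coisubset\cstar(G_1)\oc\cstar(G_2)$, which equals $\cstar(G_1)\omax\cstar(G_2)$ because $\oc$ and $\omax$ agree against a C$^*$-algebra. Transitivity of $\coisubset$ then produces the claimed inclusion, whereupon Proposition~\ref{(c-max)_unitaries} yields $\cstare(\mathcal O_{G_1}\oc\mathcal O_{G_2})=\cstar(G_1)\omax\cstar(G_2)$.

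Next I would invoke the antecedent $\mathcal O_{G_1}\omin\mathcal O_{G_2}=\mathcal O_{G_1}\oc\mathcal O_{G_2}$. Because the C$^*$-envelope depends only on the operator system structure, this equality upgrades the previous identity to $\cstare(\mathcal O_{G_1}\omin\mathcal O_{G_2})=\cstar(G_1)\omax\cstar(G_2)$. Independently, injectivity of the minimal operator system tensor product furnishes a complete order embedding $\mathcal O_{G_1}\omin\mathcal O_{G_2}\coisubset\cstar(G_1)\omin\cstar(G_2)$ whose image generates the C$^*$-algebra $\cstar(G_1)\omin\cstar(G_2)$ (because each $\mathcal O_{G_j}$ generates $\cstar(G_j)$); hence the minimal C$^*$-tensor product is a C$^*$-cover of $\mathcal O_{G_1}\omin\mathcal O_{G_2}$.

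Finally I would close the argument by sandwiching two $*$-homomorphisms. Minimality of the C$^*$-envelope provides a surjective unital $*$-homomorphism $\pi:\cstar(G_1)\omin\cstar(G_2)\to\cstar(G_1)\omax\cstar(G_2)$ that fixes $\mathcal O_{G_1}\otimes\mathcal O_{G_2}$, and therefore fixes the algebraic tensor product $\cstar(G_1)\otimes\cstar(G_2)$. Since the canonical quotient $q:\cstar(G_1)\omax\cstar(G_2)\to\cstar(G_1)\omin\cstar(G_2)$ also fixes this dense $*$-subalgebra, the maps $\pi$ and $q$ are mutually inverse there, hence everywhere, giving $\cstar(G_1)\omin\cstar(G_2)=\cstar(G_1)\omax\cstar(G_2)$. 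I expect the main obstacle to be the assembly of the $\coisubset$-chain in the first step, since it is precisely there that the full strength of the hypothesis is consumed and one must invoke the symmetry of $\oc$ and the collapse $\oc=\omax$ against a C$^*$-algebra at the right moments; once that inclusion is secured, the envelope identification and the quotient bookkeeping are routine.
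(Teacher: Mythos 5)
Your proof is correct, and its first half coincides with the paper's: the chain
$\mathcal O_{G_1}\oc\mathcal O_{G_2}\coisubset\cstar(G_1)\oc\mathcal O_{G_2}\coisubset\cstar(G_1)\oc\cstar(G_2)=\cstar(G_1)\omax\cstar(G_2)$,
obtained from the two instances of the hypothesis and the symmetry of $\oc$, is exactly how the paper begins. Where you diverge is in how the resulting unital complete order embedding of $\mathcal O_{G_1}\omin\mathcal O_{G_2}$ into $\cstar(G_1)\omax\cstar(G_2)$ is promoted to a $*$-homomorphism on $\cstar(G_1)\omin\cstar(G_2)$. The paper uses the first conclusion of Proposition \ref{(c-max)_unitaries}: each $u_1\otimes u_2$ is sent to a unitary, so the spanning unitaries lie in the multiplicative domain of the embedding, which is therefore multiplicative on the generated C$^*$-algebra; that algebra is then identified with $\cstar(G_1)\omin\cstar(G_2)$ by citing Kavruk--Paulsen--Todorov--Tomforde. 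You instead use the second conclusion of Proposition \ref{(c-max)_unitaries} --- the identification $\cstare(\mathcal O_{G_1}\oc\mathcal O_{G_2})=\cstar(G_1)\omax\cstar(G_2)$ --- transfer it to $\omin$ via the antecedent, and then invoke the universal (minimality) property of the C$^*$-envelope against the C$^*$-cover $\cstar(G_1)\omin\cstar(G_2)$ furnished by injectivity of $\omin$. The two mechanisms are equivalent in substance, but yours trades the multiplicative-domain bookkeeping for the verification that $\cstar(G_1)\omin\cstar(G_2)$ is a C$^*$-cover, which you carry out correctly. One cosmetic point: the homomorphism $\pi$ produced by minimality fixes the elementary tensors from $\mathcal O_{G_1}\otimes\mathcal O_{G_2}$ and hence the $*$-algebra they generate, which is $\mathbb C[G_1]\otimes\mathbb C[G_2]$ rather than all of $\cstar(G_1)\otimes\cstar(G_2)$; this dense $*$-subalgebra is nonetheless enough to conclude, as you do, that $\pi$ and the canonical quotient are mutually inverse.
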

\begin{proof}
	Assume $\mathcal O_{G_1}\otimes_{\textrm{min}}\mathcal O_{G_2}=\mathcal O_{G_1}\otimes_{\textrm c}\mathcal O_{G_2}$.  By hypothesis, we have \begin{align*}
		\mathcal O_{G_1}\otimes_{\textrm{min}}\mathcal O_{G_2}=\mathcal O_{G_1}\otimes_{\textrm c}\mathcal O_{G_2}&\subseteq_{\textrm{coi}} \cstar(G_1)\otimes_{\textrm c}\mathcal O_{G_2}\\
		&\subseteq_{\textrm{coi}} \cstar(G_1)\otimes_{\textrm c}\cstar(G_2)\\
		&=\cstar(G_1)\otimes_{\textrm{max}}\cstar(G_2).
	\end{align*} 	
Let $\mathcal W=\{u_1\otimes u_2\,|\,u_j\in\mathcal O_{G_j}\text{ is a unitary generator of }G_j\}\subseteq \cstar(G_1)\otimes_{\textrm{min}}\cstar(G_2),$
where the identity elements of $G_1$ and $G_2$ are considered as unitary generators.
If $\iota : \mathcal O_{G_1}\otimes_{\textrm{min}}\mathcal O_{G_2}\rightarrow \cstar(G_1)\otimes_{\textrm{max}}\cstar(G_2)$ is the unital complete order embedding
that exists by the coi-inclusions above, then,
by Proposition \ref{(c-max)_unitaries},
$\iota(u_1\otimes u_2)$ is unitary in $\cstar(G_1)\otimes_{\textrm{max}}\cstar(G_2)$, for every $u_1\otimes u_2\in\mathcal W$.
Hence, $\iota$ maps the spanning set of unitaries in $\mathcal O_{G_1}\otimes_{\textrm{min}}\mathcal O_{G_2}$ 
to unitary elements of $\cstar(G_1)\otimes_{\textrm{max}}\cstar(G_2)$; thus, $\mathcal W$ is in the multiplicative domain of $\iota$. 
Hence, $\iota$ is a unital $*$-homomorphism on
\[
\cstar(\mathcal W)=\cstar(\mathcal O_{G_1}\otimes_{\textrm{min}}\mathcal O_{G_2}) = \cstare(\mathcal O_{G_1})\omin \cstare(\mathcal O_{G_2}),
\]
where the final equality is a consequence of \cite[Theorem 4.4]{kavruk--paulsen--todorov--tomforde2011}. Therefore,
we obtain a unital $\ast$-monomorphism
$\cstar(G_1)\otimes_{\textrm{min}}\cstar(G_2)\rightarrow \cstar(G_1)\otimes_{\textrm{max}}\cstar(G_2)$, which proves 
\[
\cstar(G_1)\otimes_{\textrm{min}}\cstar(G_2)=\cstar(G_1)\otimes_{\textrm{max}}\cstar(G_2),
\]
as required.
\end{proof}

With the resolution of the Connes Embedding Problem in the negative \cite{goldbring2022,nonCEP}, the C$^*$-algebras 
$\cstar(G_1)\otimes_{\textrm{min}}\cstar(G_2)$ and $\cstar(G_1)\otimes_{\textrm{max}}\cstar(G_2)$ are now known to be non-isomorphic for
many groups $G_1$ and $G_2$. Hence, as 
an immediate corollary of Proposition \ref{rdci-(min,max)}, we deduce the following result.

\begin{corollary}\label{main_corollary_tensor}
	If each of $G_1$ and $G_2$ contains the free group $\ftwo$ as a subgroup, and if $\mathcal O_{G_j}\otimes_{\textrm c}\mathcal R\subseteq_{\textrm{coi}}\cstar(G_j)\otimes_{\textrm c}\mathcal R$ for every operator system $\mathcal R$, then 
\[
\mathcal O_{G_1}\otimes_{\textrm{min}}\mathcal O_{G_2}\neq \mathcal O_{G_1}\otimes_{\textrm c}\mathcal O_{G_2}.
\]
\end{corollary}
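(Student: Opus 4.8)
The plan is to obtain the corollary as the contrapositive of Proposition \ref{rdci-(min,max)}. The standing hypotheses of the corollary, namely $\mathcal O_{G_j}\oc\mathcal R\coisubset\cstar(G_j)\oc\mathcal R$ for every operator system $\mathcal R$ and $j=1,2$, are exactly the hypotheses of that proposition, so the implication
\[
\mathcal O_{G_1}\omin\mathcal O_{G_2}=\mathcal O_{G_1}\oc\mathcal O_{G_2}\ \Longrightarrow\ \cstar(G_1)\omin\cstar(G_2)=\cstar(G_1)\omax\cstar(G_2)
\]
is at my disposal. It therefore suffices to verify that the C$^*$-algebraic equality on the right \emph{fails}; the contrapositive then forces $\mathcal O_{G_1}\omin\mathcal O_{G_2}\neq\mathcal O_{G_1}\oc\mathcal O_{G_2}$, which is the assertion.

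To separate $\cstar(G_1)\omin\cstar(G_2)$ from $\cstar(G_1)\omax\cstar(G_2)$ I would reduce to the free group by exploiting the containment $\ftwo\leq G_j$. The tool is a conditional expectation $E_j:\cstar(G_j)\rightarrow\cstar(\ftwo)$ extending the coset truncation $\lambda_g\mapsto\mathbf 1_{\ftwo}(g)\lambda_g$. Such an $E_j$ exists because the indicator function $\mathbf 1_{\ftwo}$ of the subgroup is positive definite on $G_j$: the matrix $[\mathbf 1_{\ftwo}(g_i^{-1}g_\ell)]$ decomposes, after grouping the $g_i$ by left $\ftwo$-coset, into diagonal blocks of ones, hence is positive semidefinite. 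A normalized positive definite function induces a unital completely positive multiplier on $\cstar(G_j)$ (realised as a Stinespring compression), whose range is $\cstar(\ftwo)$ and which restricts to the identity there; in particular the inclusion $\cstar(\ftwo)\hookrightarrow\cstar(G_j)$ is a genuine unital embedding of C$^*$-algebras admitting $E_j$ as a left inverse.

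With $E_1,E_2$ in hand, the inclusions
\[
\cstar(\ftwo)\otimes_\alpha\cstar(\ftwo)\ \hookrightarrow\ \cstar(G_1)\otimes_\alpha\cstar(G_2),\qquad \alpha\in\{\mathrm{min},\mathrm{max}\},
\]
are isometric: for $\alpha=\mathrm{min}$ this is the automatic injectivity of the spatial tensor product under subalgebra inclusions, while for $\alpha=\mathrm{max}$ the ucp map $E_1\otimes E_2:\cstar(G_1)\omax\cstar(G_2)\rightarrow\cstar(\ftwo)\omax\cstar(\ftwo)$ is a left inverse of the inclusion and hence forces it to be injective. Consequently, were $\cstar(G_1)\omin\cstar(G_2)=\cstar(G_1)\omax\cstar(G_2)$ to hold, then restricting the canonical surjection $\cstar(G_1)\omax\cstar(G_2)\rightarrow\cstar(G_1)\omin\cstar(G_2)$ to the subalgebra $\cstar(\ftwo)\omax\cstar(\ftwo)$ would yield an isomorphism onto $\cstar(\ftwo)\omin\cstar(\ftwo)$, i.e.\ $\cstar(\ftwo)\omin\cstar(\ftwo)=\cstar(\ftwo)\omax\cstar(\ftwo)$. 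By Kirchberg's theorem this last equality is equivalent to the Connes Embedding Problem, which is now known to fail \cite{goldbring2022,nonCEP}; the contradiction establishes $\cstar(G_1)\omin\cstar(G_2)\neq\cstar(G_1)\omax\cstar(G_2)$ and completes the proof.

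The contrapositive bookkeeping is routine, and the positive-definiteness of $\mathbf 1_{\ftwo}$ is elementary; the step I expect to carry the weight is the functoriality of $\omax$ under the subalgebra inclusions $\cstar(\ftwo)\leq\cstar(G_j)$. Minimal tensor products are automatically injective on subalgebras, but maximal tensor products are not, and it is exactly the conditional expectations $E_j$ that make $E_1\otimes E_2$ available as a ucp left inverse, thereby pinning down $\cstar(\ftwo)\omax\cstar(\ftwo)$ as a C$^*$-subalgebra of $\cstar(G_1)\omax\cstar(G_2)$ and enabling the restriction argument.
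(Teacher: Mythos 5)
Your proof is correct and follows essentially the same route as the paper: take the contrapositive of Proposition \ref{rdci-(min,max)} and derive a contradiction with the negative resolution of the CEP via Kirchberg's theorem. The only difference is that where the paper simply cites \cite[Proposition 3.3]{farenick--kavruk--paulsen--todorov2018} for the passage from $\cstar(G_1)\omin\cstar(G_2)=\cstar(G_1)\omax\cstar(G_2)$ to $\cstar(\ftwo)\omin\cstar(\ftwo)=\cstar(\ftwo)\omax\cstar(\ftwo)$, you prove that step directly, using the ucp multipliers $E_j$ associated with the positive definite function $\mathbf 1_{\ftwo}$ so that $E_1\otimes E_2$ left-inverts the inclusion into the maximal tensor product --- which is in substance the proof of the cited result. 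One small imprecision: the existence of $E_j$ shows that the closure of $\mathbb C[\ftwo]$ in $\cstar(G_j)$ is the range of a ucp idempotent, but it does not by itself show that this closure carries the full C$^*$-norm of $\cstar(\ftwo)$; the isometry of $\cstar(\ftwo)\rightarrow\cstar(G_j)$ is the standard induced-representation fact, which you (like the paper) should quote rather than deduce from the expectation.
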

\begin{proof}
	If, on the contrary, $\mathcal O_{G_1}\otimes_{\textrm{min}}\mathcal O_{G_2}= \mathcal O_{G_1}\otimes_{\textrm c}\mathcal O_{G_2}$, then Proposition \ref{rdci-(min,max)} 
	would imply $\cstar(G_1)\otimes_{\textrm{min}}\cstar(G_2)=\cstar(G_1)\otimes_{\textrm{max}}\cstar(G_2)$, which in turn would imply
	$\cstar(\ftwo)\omin\cstar(\ftwo)=\cstar(\ftwo)\omax\cstar(\ftwo)$ by the hypotheis $\mathbb F_2\subseteq G_j$
	\cite[Proposition 3.3]{farenick--kavruk--paulsen--todorov2018}. However, the equality of the C$^*$-algebra tensor products involving $\cstar(\ftwo)$
	is Kirchberg's equivalency to an affirmative outcome of the CEP \cite{kirchberg1993}; hence, it cannot be that 
	$\mathcal O_{G_1}\otimes_{\textrm{min}}\mathcal O_{G_2}= \mathcal O_{G_1}\otimes_{\textrm c}\mathcal O_{G_2}$.
\end{proof}

%%%%%%%%%%%%%%%%%%%%%%
\subsection{The main result on tensor products}
Our aim is to prove the following theorem (Theorem \ref{main result tp}):
if $\osr,\ost\in\{\mbox{\textrm{NC}}(d),\mbox{\textrm{NCP}}(k)\,|\,d,k\geq3\}$, then
 \[
 \osr\omin\ost \not= \osr\oc\ost \not = \osr\omax\ost.
 \]

\begin{lemma}\label{rdci_prism}
	For every operator system $\mathcal R$, and $d,k\geq 2$,
\[
\mbox{\textrm{NC}}(d)\otimes_{\textrm c}\mathcal R\subseteq_{\textrm{coi}}\cstar(*_1^d \mathbb Z_2)\otimes_{\textrm c}\mathcal R
\,\mbox{ and }\,
\mbox{\textrm{NCP}}(k)\otimes_{\textrm c}\mathcal R\subseteq_{\textrm{coi}}\cstar(\mathbb Z_k\ast \mathbb Z_2)\otimes_{\textrm c}\mathcal R.
\] 
\end{lemma}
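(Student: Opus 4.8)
The plan is to unwind the meaning of $\subseteq_{\rm coi}$ and reduce both assertions to a single dilation-theoretic statement, treated uniformly. Writing $\mathcal{O}_G$ for either $\mbox{\rm NC}(d)$ or $\mbox{\rm NCP}(k)$ and $\cstar(G)$ for the corresponding group C$^*$-algebra, the relation $\mathcal{O}_G\oc\mathcal{R}\subseteq_{\rm coi}\cstar(G)\oc\mathcal{R}$ asserts that the canonical inclusion $\iota\colon\mathcal{O}_G\otimes\mathcal{R}\to\cstar(G)\otimes\mathcal{R}$ is a complete order embedding for the commuting structure. One direction is free: since $\mathcal{O}_G\hookrightarrow\cstar(G)$ is ucp and $\oc$ is functorial, $\iota$ is ucp and preserves positivity. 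The content is the reverse implication, that if $X\in\M_q(\mathcal{O}_G\otimes\mathcal{R})$ is positive in $\M_q(\cstar(G)\oc\mathcal{R})$ then $X$ is already positive in $\M_q(\mathcal{O}_G\oc\mathcal{R})$. As $\cstar(G)$ is a C$^*$-algebra, $\cstar(G)\oc\mathcal{R}=\cstar(G)\omax\mathcal{R}$, and positivity in either product is tested against commuting pairs of ucp maps into a common $\B(\mathcal{H})$.

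So I would fix a commuting pair $\phi\colon\mathcal{O}_G\to\B(\mathcal{H})$ and $\psi\colon\mathcal{R}\to\B(\mathcal{H})$ (ucp, with $[\phi(\mathcal{O}_G),\psi(\mathcal{R})]=0$) testing positivity in $\mathcal{O}_G\oc\mathcal{R}$, and produce a Hilbert space $\mathcal{K}\supseteq\mathcal{H}$, a unital $*$-homomorphism $\Phi\colon\cstar(G)\to\B(\mathcal{K})$, a ucp map $\tilde\psi\colon\mathcal{R}\to\B(\mathcal{K})$ with $[\Phi(\cstar(G)),\tilde\psi(\mathcal{R})]=0$, and an isometry $V\colon\mathcal{H}\to\mathcal{K}$ satisfying $V^*\Phi(x)V=\phi(x)$ for all $x\in\mathcal{O}_G$ and the intertwining relation $\tilde\psi(r)V=V\psi(r)$. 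Granting this, the intertwining relation gives $(\phi\cdot\psi)(x\otimes r)=\phi(x)\psi(r)=V^*\Phi(x)V\psi(r)=V^*\Phi(x)\tilde\psi(r)V=V^*(\Phi\cdot\tilde\psi)(x\otimes r)V$, where $\Phi\cdot\tilde\psi$ is a legitimate commuting product because the ranges of $\Phi$ and $\tilde\psi$ commute. Hence $(\phi\cdot\psi)^{[q]}=\mathrm{Ad}_{(V\otimes 1_q)^*}\circ(\Phi\cdot\tilde\psi)^{[q]}$ on $\M_q(\mathcal{O}_G\otimes\mathcal{R})$; since $(\Phi,\tilde\psi)$ is a commuting pair for $\cstar(G)\oc\mathcal{R}$, the positivity of $X$ there forces $(\Phi\cdot\tilde\psi)^{[q]}(X)\geq0$, and compressing by the isometry $V\otimes 1_q$ yields $(\phi\cdot\psi)^{[q]}(X)\geq0$. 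As $(\phi,\psi)$ was arbitrary, $X\geq0$ in $\M_q(\mathcal{O}_G\oc\mathcal{R})$.

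It remains to build $(\Phi,\tilde\psi,V)$, that is, to dilate the generators to unitaries of the correct finite orders while preserving commutation with $\psi$. For $\mbox{\rm NC}(d)$ this is direct: each $\phi(u_j)$ is a selfadjoint contraction, and its Halmos dilation to a symmetry on $\mathcal{H}\oplus\mathcal{H}$ (as in the proof of Theorem \ref{joint dilation}) can be carried out simultaneously for every $j$ on the single doubled space. Taking $\tilde\psi=\psi\oplus\psi$ and $V$ the inclusion of the first summand, the amplified $\tilde\psi(r)$ commutes with each dilated symmetry because $\psi(r)$ commutes with $\phi(u_j)$ and hence with the functional-calculus element $(1-\phi(u_j)^2)^{1/2}$; the universal property of the free product then produces $\Phi$, no relation among the dilated symmetries being needed. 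For $\mbox{\rm NCP}(k)$ the subtlety is that $w$ and $v$ do not commute, so no single simultaneous dilation is available. I would instead exploit that $\{1,w,\dots,w^{k-1}\}$ spans a copy of the abelian C$^*$-algebra $\cstar(\mathbb{Z}_k)$ inside $\mbox{\rm NCP}(k)$. Applying the minimal Stinespring dilation to $\phi\vert_{\cstar(\mathbb{Z}_k)}$ yields a $*$-representation whose value at $w$ is a unitary of order dividing $k$; the commutant-lifting property of minimal Stinespring dilations (a direct computation on the Stinespring space) furnishes a unital $*$-homomorphism from $\phi(\cstar(\mathbb{Z}_k))'$ into the commutant of that representation, intertwined with the Stinespring isometry, and composing it with $\psi$ gives a ucp lift $\psi_k$ of $\psi$ commuting with the dilated $w$. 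One then transports $\phi(v)$ to a selfadjoint contraction on the Stinespring space that commutes with $\psi_k$, Halmos-dilates it on a further doubled space while amplifying both the order-$k$ unitary and $\psi_k$, and checks as before that commutation survives. The free-product universal property assembles the order-$k$ unitary and the symmetry into $\Phi$, again with no cross-relation required.

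I expect the main obstacle to be exactly this last construction for $\mbox{\rm NCP}(k)$: because the free product imposes no relation between $w$ and $v$, they cannot be dilated in one step, so one must dilate the cyclic part by a commutant-preserving Stinespring dilation and the order-two part by a Halmos dilation in succession, verifying at each stage that commutation with the successively lifted $\psi$ is maintained. It is worth emphasizing that the argument requires only that a ucp map be dilated to a representation of the finite cyclic group, and uses no numerical-range hypothesis; in particular Mirman's theorem plays no role, which is what allows the conclusion to hold uniformly in $k$ rather than only in the triangular case $k=3$.
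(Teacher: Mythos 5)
Your proposal is correct and follows essentially the same route as the paper: both reduce the coi-inclusion to the statement that an arbitrary commuting pair $(\phi,\psi)$ on $(\mathcal O_G,\mathcal R)$ extends (equivalently, dilates) to a commuting pair in which the first map is defined on all of $\cstar(G)$, and both achieve this by dilating the cyclic generator through a Stinespring-type construction and the order-two generator through a Halmos dilation, verifying commutation via functional calculus before invoking the universal property of the free product. The only presentational difference is that where you cite Arveson's commutant lifting theorem for the minimal Stinespring dilation of $\phi$ restricted to $\mbox{span}\{1,w,\dots,w^{k-1}\}$, the paper performs the same step explicitly with the column isometry $\xi\mapsto h_0^{1/2}\xi\oplus\cdots\oplus h_{k-1}^{1/2}\xi$ built from the positive operators $h_j$ satisfying $\psi(w)=\sum_j\omega^j h_j$, the commutant lift being the diagonal amplification of the second map.
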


\begin{proof}
The assertion $\mbox{\textrm{NC}}(d)\otimes_{\textrm c}\mathcal R\subseteq_{\textrm{coi}}\cstar(*_1^d \mathbb Z_2)\otimes_{\textrm c}\mathcal R$ is proved in
\cite[Lemma 7.2]{farenick--kavruk--paulsen--todorov2014}.

Consider now the case of $\mbox{\textrm{NCP}}(k)$, with $k\geq 3$.
By definition of $\otimes_{\textrm c}$, it is sufficient to show that, for every pair of ucp maps 
$\psi : \mbox{\textrm{NCP}}(k)\rightarrow\B(\H)$ and	
$\phi : \mathcal R\rightarrow\B(\H)$ with commuting ranges, there exists a ucp extension 
$\Psi : \cstar(\mathbb Z_k\ast \mathbb Z_2)\rightarrow\B(\H)$ of $\psi$ such that $\Psi$ and $\phi$ also have commuting ranges.

As done earlier, denote the canonical unitary generators of $\mbox{\textrm{NCP}}(k)$ by $w$ and $v$, where $w^k=v^2=1$. Let
$\psi : \mbox{\textrm{NCP}}(k)\rightarrow\B(\H)$ and	
$\phi : \mathcal R\rightarrow\B(\H)$ be ucp maps with commuting ranges.
Express $\psi$ in its Stinespring decomposition: $\psi=s^*\rho s$, for some unital $*$-representation
$\rho:\cstar(\mathbb Z_k * \mathbb Z_2)\rightarrow\B(\H_\rho)$ and isometry $s:\H\rightarrow\H_\rho$. As $\rho(w)$ is a unitary of order
$k$, it admits a spectral decomposition $\rho(w)=\displaystyle\sum_{j=0}^{k-1}\omega^j p_j$, where $\omega$ is a primitive $k$-th root of
unity and each $p_j$ is a spectral projection. Thus, if $h_j=s^*p_js$, for each $j$, then
$\displaystyle\psi(w)=\sum_{j=0}^{k-1} \omega^j h_j$.

Consider the isometry $z:\H\rightarrow\H^{\oplus k}$, where $\H^{\oplus k}$ is the direct sum of $k$ copies of $\H$, given by 
$z\xi=h_0^{1/2}\xi\oplus\cdots\oplus h_{k-1}^{1/2}\xi$, for $\xi\in\H$. Let $b:\H^{\oplus k}\rightarrow\H^{\oplus k}$ be given by $b=z\psi(v)z^*$. 
Then, $b$ is a selfadjoint contraction and, therefore, admits a Halmos dilation to a symmetry $\tilde v$ on $\H^{\oplus k}\oplus\H^{\oplus k}$:
\[
\tilde v = \left[ \begin{array}{cc} b & (1-b^2)^{1/2} \\ (1-b^2)^{1/2} & -b \end{array}\right].
\]

If $y\in \B(\H^{\oplus k} )$ and $\tilde w\in \B(\H^{\oplus k}\oplus\H^{\oplus k})$ are given by
\[
y=\bigoplus_{j=0}^{k-1}\omega^j 1_\H
\,\mbox{ and }\,
\tilde w = y\oplus 1_{\H^{\oplus k}},
\]
then, $\tilde w$ is a unitary of order $k$ and, by the universal property, there exists 
a unital $*$-homomorpshim
$\pi:\cstar(\mathbb Z_k * \mathbb Z_2)\rightarrow\B(\H^{\oplus k}\oplus\H^{\oplus k})$
such that $\pi(w)=\tilde w$ and $\pi(v)=\tilde v$.
With respect to the decomposition $\H^{\oplus k}\oplus\H^{\oplus k}$, express $\pi$ as a 
$2\times2$ matrix of maps:
\[
\pi=\begin{bmatrix}
		\pi_{11}&\pi_{12}\\\pi_{21}&\pi_{22}
	\end{bmatrix}.
\]
Define a linear map $\Psi:\cstar(\mathbb Z_k * \mathbb Z_2)\rightarrow\B(\H)$ by $\Psi(x)=z^*\pi_{11}(x)z$, for $x\in \cstar(\mathbb Z_k * \mathbb Z_2)$.
Because $\pi_{11}$ is a compression of a $*$-homomorphism, $\pi_{11}$ and, hence, $\Psi$ are completely positive. Since $\Psi(w)=\psi(w)$
and $\Psi(v)=\psi(v)$, the map $\Psi$ is a completely positive extension of $\psi$ to $\cstar(\mathbb Z_k * \mathbb Z_2)$.

It remains to show that $\Psi$ and $\phi$ have commuting ranges. To this end, 
first note that, because $\rho(w)$ is a normal operator with finite spectrum, each spectral projection $p_j$ of $\rho(w)$ is a polynomial in $\rho(w)$,
say $p_j=f_j(\rho(w))$, for some $f_j\in\mathbb C[t]$. Thus, $h_j=f_j(\psi(w))$, for every $j$, and any operator $g$ that commutes 
with $\psi(w)$ also commutes with $h_j$; further, by functional calculus, $g$ also commutes with $h_j^{1/2}$.

For each $r\in\mathcal R$, let $\Phi(r)=\phi(r)\oplus\cdots\oplus\phi(r)\in \B(\H^{\oplus k})$. As $\phi(r)$ commutes with $\psi(w)$, $\phi(r)$ commutes
with each $h_j$ and, therefore, $\Phi(r)z=z\phi(r)$. Passing to adjoints in $\osr$, we also have $z^*\phi(r)=z^*\Phi(r)$, for every $r\in\osr$.

Recall $b=z\psi(v)z^*$ and, for every $r\in\osr$, $\phi(r)$ commutes with $\psi(v)$. Thus, 
\[
\Phi(r)b=\Phi(r)z\psi(v)z^*=z\phi(r)\psi(v)z^*=z\psi(v)\phi(r)z^*=z\psi(v)z^*\Phi(r)=b\Phi(r),
\]
which implies that $\Phi(r)$ commutes with $(1-b^2)^{1/2}$ and $\Phi(r)\oplus\Phi(r)$ commutes with $\tilde v$.

Note that every $\Phi(r)\oplus\Phi(r)$ also commutes with $\tilde v$. Hence, as $\pi$ is a homomorphism, each
$\Phi(r)\oplus\Phi(r)$ commutes with $\pi(a)$, for every $a\in\cstar(\mathbb Z_k * \mathbb Z_2)$. In passing to the $(1,1)$-entry of the
$2\times 2$ operator matrix $\pi(a)$, $\Phi(r)\pi_{11}(a)=\pi_{11}(z)\Phi(r)$, for all $r\in\osr$ and $a\in\cstar(\mathbb Z_k * \mathbb Z_2)$.
Thus,
\[
\phi(r)z^*\pi_{11}(a)z=z^*\Phi(r)\pi_{11}(a)z=z^*\pi_{11}(a)\Phi(r)z=z^*\pi_{11}(a)z\phi(r)
\]
yields $\phi(r)\Psi(a)=\Psi(a)\phi(r)$, for all $r\in\osr$ and $a\in\cstar(\mathbb Z_k * \mathbb Z_2)$. Hence,
$\Psi$ and $\phi$ have commuting ranges.
\end{proof}

We now are ready to prove the main result of this section. (If $\osr$ and $\ost$ are noncommutative cubes, 
then  $\osr\oc\ost \not = \osr\omax\ost$ was
established previously in \cite[Theorem 7.13]{farenick--kavruk--paulsen--todorov2014}.)

\begin{theorem}\label{main result tp}
If $\osr,\ost\in\{\mbox{\textrm{NC}}(d),\mbox{\textrm{NCP}}(k)\,|\,d,k\geq3\}$, then
 \[
 \osr\omin\ost \not= \osr\oc\ost \not = \osr\omax\ost.
 \]
\end{theorem}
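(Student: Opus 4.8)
The plan is to treat the two strict inequalities separately, after a preliminary reduction that makes the whole family behave like the cube case. Writing $G_1$ and $G_2$ for the two free products underlying $\osr$ and $\ost$, I would first establish that $\osr\oc\ost=\osr\oess\ost$. Applying Lemma \ref{rdci_prism} in the first tensor factor, and then using the symmetry (flip isomorphism) of $\oc$ together with Lemma \ref{rdci_prism} in the second factor, produces a composite of complete order embeddings
\[
\osr\oc\ost\;\coisubset\;\cstar(G_1)\oc\ost\;\coisubset\;\cstar(G_1)\oc\cstar(G_2)=\cstar(G_1)\omax\cstar(G_2),
\]
where the final equality holds because $\oc$ and $\omax$ coincide on unital C$^*$-algebras. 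By Theorem \ref{cstar-env} we have $\cstare(\osr)=\cstar(G_1)$ and $\cstare(\ost)=\cstar(G_2)$, so this composite is the very map on $\osr\otimes\ost$ that defines the essential tensor product; hence the $\oc$- and $\oess$-cones coincide and $\osr\oc\ost=\osr\oess\ost$ throughout the family.

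For the inequality $\osr\omin\ost\neq\osr\oc\ost$, I would invoke Corollary \ref{main_corollary_tensor}. Its hypothesis, that $\mathcal O_{G_j}\oc\mathcal R\coisubset\cstar(G_j)\oc\mathcal R$ for every operator system $\mathcal R$, is exactly Lemma \ref{rdci_prism}; moreover each of the groups $*_1^d\mathbb Z_2$ (for $d\geq 3$) and $\mathbb Z_k*\mathbb Z_2$ (for $k\geq 3$) contains $\ftwo$ as a subgroup. The corollary then yields $\osr\omin\ost\neq\osr\oc\ost$ outright, the underlying obstruction being the negative resolution of the Connes Embedding Problem through Kirchberg's tensor characterisation for $\cstar(\ftwo)$.

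For the inequality $\osr\oc\ost\neq\osr\omax\ost$, the reduction recasts the claim as $\osr\oess\ost\neq\osr\omax\ost$; conceptually, this says that the systems in the family do not admit the relevant (double-commutant) expectation with respect to one another. The cube-cube instance of this separation is \cite[Theorem 7.13]{farenick--kavruk--paulsen--todorov2014}, and the plan is to transfer it to the prism. The clean conceptual point is that $\oess$ is built from the C$^*$-envelopes $\cstar(G_j)$, in which the generators $w,v$ are genuine unitaries, whereas $\omax$ is built from the universal C$^*$-covers $\cstar_{\rm max}(\osr),\cstar_{\rm max}(\ost)$, in which those same generators are only contractions; this discrepancy is what should force the $\omax$-cone to be strictly smaller. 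I would make this effective by exhibiting a matrix over $\osr\otimes\ost$ that is $\oess$-positive (certified by a pair of commuting ucp maps, using the Halmos-dilation construction already present in the proof of Lemma \ref{rdci_prism}) but which fails to lie in the $\omax$-cone; alternatively, one can try to compute the $\omax$-cone directly from the complete quotient presentation $\mbox{\rm NCP}(k)\cong(\mathbb C^k\oplus\mathbb C^2)/\ker\psi_k$ of Theorem \ref{quo-thm}, exploiting the projectivity of $\omax$ under operator system quotients.

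The main obstacle is precisely the certification of $\omax$-\emph{non}-positivity of the separating element. A naive candidate such as the single $2\times2$ block $\left[\begin{smallmatrix} 1 & w\otimes v\\ (w\otimes v)^* & 1\end{smallmatrix}\right]$ does not suffice, since a product of genuine contractions is again a contraction and this test passes in $\omax$ as well; the correct witness must instead detect the higher-order completely positive structure that distinguishes ``universally contractive'' from ``universally unitary'' behaviour of the generators. I therefore expect the careful adaptation of the computation in \cite[Theorem 7.13]{farenick--kavruk--paulsen--todorov2014} to the prism generators $w,v$ to be the technical heart of the argument.
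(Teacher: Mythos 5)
Your treatment of the first inequality $\osr\omin\ost\neq\osr\oc\ost$ is exactly the paper's: verify the hypothesis of Corollary \ref{main_corollary_tensor} via Lemma \ref{rdci_prism} and the fact that $\ftwo$ embeds in each of the underlying groups. That part is complete and correct, as is your preliminary identification $\osr\oc\ost=\osr\oess\ost$ (which the paper also records, in a later theorem, though it is not needed here).

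The gap is in the second inequality $\osr\oc\ost\neq\osr\omax\ost$. You correctly diagnose that a direct proof would require a matrix over $\osr\otimes\ost$ that is $\oc$-positive but not $\omax$-positive, you correctly note that the naive $2\times 2$ unitary block is not such a witness, and you then leave the construction of a genuine witness as an expectation (``the technical heart''); nothing in your outline actually produces the separating element, and adapting the computation of \cite[Theorem 7.13]{farenick--kavruk--paulsen--todorov2014} to the prism generators, where $w$ has order $3$ rather than $2$, is not routine. The paper avoids the witness entirely by a retraction argument: an unpublished lemma of Kavruk \cite[Lemma 6.7]{kavruk2012} provides unital complete order embeddings $\iota_\osr:\mbox{\rm NC}(2)\rightarrow\osr$, $\iota_\ost:\mbox{\rm NC}(2)\rightarrow\ost$ together with ucp left inverses $q_\osr$, $q_\ost$. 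By functoriality of $\oc$ and $\omax$ under ucp maps, any complete order embedding $\gamma:\osr\oc\ost\rightarrow\osr\omax\ost$ would compress, via $(q_\osr\otimes q_\ost)\circ\gamma\circ(\iota_\osr\otimes\iota_\ost)$, to a complete order embedding $\mbox{\rm NC}(2)\oc\mbox{\rm NC}(2)\rightarrow\mbox{\rm NC}(2)\omax\mbox{\rm NC}(2)$, contradicting \cite[Corollary 7.12]{farenick--kavruk--paulsen--todorov2014}. So the known low-dimensional separation is transferred by exhibiting $\mbox{\rm NC}(2)$ as a ucp-retract of every system in the family, not by rebuilding the positivity witness; this is the idea your proposal is missing, and without it (or an explicit witness, which you do not supply) the second inequality remains unproved.
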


\begin{proof} By Lemma \ref{rdci_prism},
every operator system $\osr\in\{\mbox{\textrm{NC}}(d),\mbox{\textrm{NCP}}(k)\,|\,d,k\geq3\}$
satisfies $\osr\oc\oss\subseteq_{\textrm{coi}}\cstare(\osr)\omax\oss$ for every operator system $\oss$.
Further, the groups $\ast_1^d\mathbb Z_2$ and $\mathbb Z_k*\mathbb Z_2$ each contain $\ftwo$ as a subgroup, when $k,d\geq3$.
Therefore, any pair of operator systems $\osr,\ost\in\{\mbox{\textrm{NC}}(d),\mbox{\textrm{NCP}}(k)\,|\,d,k\geq3\}$ satisfies the hypothesis of 
Corollary \ref{main_corollary_tensor}; hence, by Corollary \ref{main_corollary_tensor},
$\osr\omin\ost \not= \osr\oc\ost$.

An unpublished result of Kavruk \cite[Lemma 6.7]{kavruk2012} shows there exist unital complete order embeddings
$\iota_\osr:\mbox{\textrm{NC}}(2)\rightarrow\osr$ and $\iota_\ost:\mbox{\textrm{NC}}(2)\rightarrow\ost$ and ucp maps
$q_\osr:\osr\rightarrow\mbox{\textrm{NC}}(2)$ and $q_\ost:\ost\rightarrow\mbox{\textrm{NC}}(2)$ such that $q_\osr\circ\iota_\osr$
and $q_\ost\circ\iota_\ost$ are identity maps on $\osr$ and $\ost$. If there were to exist a unital complete order embedding
$\gamma:\osr\oc\ost \rightarrow\osr\omax\ost$, then $(q_\osr\otimes q_\ost)\circ\gamma\circ(\iota_\osr\otimes\iota_\ost)$
would be a unital complete order embedding 
$\mbox{\textrm{NC}}(2)\oc\mbox{\textrm{NC}}(2)\rightarrow\mbox{\textrm{NC}}(2)\omax\mbox{\textrm{NC}}(2)$, which is in contradiction to
\cite[Corollary 7.12]{farenick--kavruk--paulsen--todorov2014}. Hence, it must be that 
$\osr\oc\ost \not = \osr\omax\ost$.
\end{proof}

%%%%%%%%%%%%%%%%%%%%%%%%%%%%%%%%%%%%%%%%%%%%%%%%%%%
\section{Applications and Additional Properties}
%%%%%%%%%%%%%%%%%%%%%%%%%%%%%%%%%%%%%%%%%%%%%%

%%%%%%%%%%%%%%%
\subsection{Scaling constants for minimal and maximal noncommutative cubes and prisms}
If, for a compact convex subset $K\subset\mathbb C^d$, there exists a
real number $C>0$ such that
\[
K^{\textrm{max}} \,\subseteq\,C\cdot K^{\textrm{min}},
\]
then $C$ is called a \emph{scaling constant} for $K$. 

While not all compact convex sets $K$ admit a scaling constant, the classical prisms and cubes do.
To see this, we draw
on the work of Passer, Shalit, and Solel in \cite{passer--shalit--solel2018}; in particular, they 
showed that any compact convex body $K\subseteq\mathbb R^d$ containing $0\in\mathbb R^d$ 
in the interior of $K$ admits a minimal scaling constant.

For every $d\geq 2$, the minimal scaling constant for $[-1,1]^d$ is exactly $\sqrt{d}$ \cite[p.~3233]{passer--shalit--solel2018}.
Let $\theta_k$ denote the minimal scaling constant for $\mbox{\textrm P}(k)$. We aim to find a lower bound for $\theta_k$.

By \cite[Proposition 3.4]{passer--shalit--solel2018}, if $K,L\subseteq\mathbb R^d$ are compact convex bodies, then
\[
\theta(K)\leq\rho(K,L)\theta(L),
\]
where $\theta(K)$ and $\theta(L)$ are the minimal scaling constants for $K$ and $L$, and 
\[
\rho(K,L)=\inf\{D>0\,|\, \exists\, R\in\mbox{\textrm GL}_d(\mathbb R)\mbox{ such that } K\subseteq R(L)\subseteq D\cdot K\}.
\]
We shall use these quantities above for the convex bodies 
$K=\mathbb B^{\mathbb R^3}$, the closed Euclidean unit ball in $\mathbb R^3$, and $L=\mbox{\textrm P}(k)$.
The minimal
scaling constant for $\mathbb B^{\mathbb R^3}$ is exactly $d$ \cite{passer--shalit--solel2018}, and so we seek an upper bound on
$\rho(\mathbb B^{\mathbb R^3},\mbox{\textrm P}(k))$ in the inequality
\[
d \leq \rho(\mathbb B^{\mathbb R^3},\mbox{\textrm P}(k))\,\theta_k.
\]

To this end, note that if $\xi\in\mbox{\textrm P}(k)$, then $\xi\in \overline{\mathbb D}\times[-1,1]$ and, therefore, 
$\|\xi\|\leq\sqrt{2}$. Thus,
\[
\mbox{\textrm P}(k) \,\subseteq\,\sqrt{2} \cdot \mathbb B_1^{\mathbb R^3}.
\]
Because the origin $0\in\mathbb R^3$ is interior to both $\mathbb B^{\mathbb R^3}$ and $\mbox{\textrm P}(k)$,
there is a maximal $r_k>0$ for which
\[
r_k\cdot \mathbb B^{\mathbb R^3} \subseteq \mbox{\textrm P}(k).
\]
By projecting onto the plane determined by the first two coordinates of $\mathbb R^3$, 
we see that the projection of $r_k\cdot \mathbb B^{\mathbb R^3}$
onto this plane must lie within the circle of radius $s_k$ inscribed by the convex hull 
$\mbox{\textrm Conv}\,C_k$ of the $k$-th roots of unity and centred at the origin. Thus, $r_k\leq s_k$. 
Since the incircle of $\mbox{\textrm Conv}\,C_k$ is tangent to the sides of the regular polygon $\mbox{\textrm P}(k)$
and touches the sides exactly at the midpoint of sides, 
the radius $s_k$ of the incircle equals the length of the midpoint of the line segment 
joining $1$ and $\omega$.
The midpoint of line segment 
joining $1$ and $\omega$ is $\frac{1}{2}(1+\cos(2\frac{\pi}{k}),\sin(2\frac{\pi}{k}))$, with length equal to $\frac{1}{2}\sqrt{2+2\cos(2\frac{\pi}{k})}
=\cos(\frac{\pi}{k})$.
Hence, 
\[
r_k\leq s_k=\cos\left(\frac{\pi}{k}\right).
\] 
Furthermore, if $\xi=(\xi_1,\xi_2,\xi_3)\in \mathbb R^3$ with $\|\xi\|\leq \cos\left(\frac{\pi}{k}\right)$, 
then $|\xi_3|\leq \cos\left(\frac{\pi}{k}\right)\leq 1$, and $\|(\xi_1,\xi_2)\|\leq \cos\left(\frac{\pi}{k}\right)$. 
Since the radius of the incircle in $\mbox{\textrm Conv}\,C_k$ exactly equals $\cos\left(\frac{\pi}{k}\right)$, 
it follows that $(\xi_1,\xi_2)\in \mbox{\textrm Conv}\,C_k$ and, hence, that $\xi\in \mbox{\textrm P}(k)$. Thus,   
\[
r_k=s_k=\cos\left(\frac{\pi}{k}\right).
\]
From
\[
\mathbb B^{\mathbb R^3} \subseteq \frac{1}{r_k}\cdot \mbox{\textrm P}(k) \subseteq \frac{\sqrt{2}}{r_k}\cdot \mathbb B^{\mathbb R^3},
\]
we deduce $\rho(\mathbb B^{\mathbb R^3},\mbox{\textrm P}(k))\leq \frac{\sqrt{2}}{r_k}$. Thus,
\[
\frac{3}{\sqrt{2}}\cos\left(\frac{\pi}{k}\right) \leq \frac{3}{\sqrt{2}}r_k 
\leq \frac{3}{\rho(\mathbb B^{\mathbb R^3},\mbox{\textrm P}(k))}\leq \theta_k.
\]
Thus, for the classical triangular prism $\mbox{\textrm P}(3)$, we have 
\[
\theta_3\geq \frac{3}{2\sqrt{2}},
\]
resulting in the following variant of the Halmos-Mirman dilation theorem (Theorem \ref{halmos-mirman}).

\begin{theorem}[Scaled Halmos-Mirman Theorem] 
If $x,y\in\B(\H)$ are such that $x$ has numerical range contained in the triangle $\mbox{\textrm Conv}\,C_3$
and $y$ is a selfadjoint contraction, then there are commuting unitaries $u$ and $v$ acting on a 
Hilbert space $\K$ containing $\H$ as a subspace, and a constant
$C\geq \frac{3}{2\sqrt{2}}$, such that $u^3=v^2=1$  and
$Cu$ and $Cv$ are joint dilations of $x$ and $y$ to $\K$. 
\end{theorem}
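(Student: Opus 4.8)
The plan is to encode the data of $x$ and $y$ as a triple of selfadjoint operators lying in $\mbox{\rm P}(3)^{\rm max}$, to contract this triple into $\mbox{\rm P}(3)^{\rm min}$ by means of the scaling constant $\theta_3$ computed above, and then to reassemble the resulting commuting selfadjoint dilation into a pair of commuting unitaries of orders $3$ and $2$.

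First I would write $x = a_1 + ia_2$, where $a_1 = \tfrac12(x+x^*)$ and $a_2 = \tfrac1{2i}(x-x^*)$ are selfadjoint, and set $a_3 = y$. For every unit vector $\xi \in \H$ one has $\langle x\xi,\xi\rangle = \langle a_1\xi,\xi\rangle + i\langle a_2\xi,\xi\rangle$, so the hypothesis that the numerical range of $x$ is contained in $\mbox{\rm Conv}\,C_3$ says precisely that $(\langle a_1\xi,\xi\rangle,\langle a_2\xi,\xi\rangle) \in \mbox{\rm Conv}\,C_3$ for every such $\xi$. Combined with $\langle a_3\xi,\xi\rangle \in [-1,1]$, this yields $W_1(a_1,a_2,a_3) \subseteq \mbox{\rm Conv}\,C_3 \times [-1,1] = \mbox{\rm P}(3)$, and therefore $(a_1,a_2,a_3) \in \mbox{\rm P}(3)^{\rm max}$ by the definition of the maximal noncommutative convex set.

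Next I would apply the scaling estimate obtained just above. Taking $C = \theta_3$, the minimal scaling constant for $\mbox{\rm P}(3)$, we have $\mbox{\rm P}(3)^{\rm max} \subseteq C \cdot \mbox{\rm P}(3)^{\rm min}$ and, by the computation preceding the statement, $C \geq \tfrac{3}{2\sqrt2}$. Hence $C^{-1}(a_1,a_2,a_3) \in \mbox{\rm P}(3)^{\rm min}$, and the definition of $\mbox{\rm P}(3)^{\rm min}$ supplies a Hilbert space $\K \supseteq \H$, commuting selfadjoint operators $b_1,b_2,b_3 \in \B(\K)$ whose joint spectrum is the (finite, hence closed) set $\partial_{\rm e}\mbox{\rm P}(3)$, and the projection $p \in \B(\K)$ with range $\H$, such that $C^{-1}a_j = p\,b_j{}_{\vert\H}$ for each $j$.

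Finally I would convert this commuting triple into the desired unitaries. The extreme points of $\mbox{\rm P}(3)$ are the six vertices $(\mathrm{Re}\,\omega^\ell,\mathrm{Im}\,\omega^\ell,\pm1)$ with $\ell \in \{0,1,2\}$ and $\omega$ a primitive cube root of unity. Setting $u = b_1 + ib_2$ and $v = b_3$, the joint functional calculus for the commuting family $\{b_1,b_2,b_3\}$ shows that $u$ is normal with spectrum contained in $\{1,\omega,\omega^2\}$, so that $u$ is unitary with $u^3 = 1$, while $v$ is a symmetry with $v^2 = 1$; since $b_1,b_2,b_3$ commute, so do $u$ and $v$. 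As $C^{-1}x = p(b_1+ib_2){}_{\vert\H} = p\,u{}_{\vert\H}$ and $C^{-1}y = p\,v{}_{\vert\H}$, we conclude $x = p(Cu){}_{\vert\H}$ and $y = p(Cv){}_{\vert\H}$, which exhibits $Cu$ and $Cv$ as joint dilations of $x$ and $y$ to $\K$. The argument is little more than an unwinding of the definitions of $\mbox{\rm P}(3)^{\rm max}$ and $\mbox{\rm P}(3)^{\rm min}$ around the scaling inequality; the one point requiring care is the passage from commuting selfadjoint operators with joint spectrum at the vertices of the prism to commuting unitaries of the correct finite orders, which rests on the explicit identification of $\partial_{\rm e}\mbox{\rm P}(3)$ with the six points listed, while the quantitative bound $C \geq \tfrac{3}{2\sqrt2}$ is inherited verbatim from the estimate for $\theta_3$.
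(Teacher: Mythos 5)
Your proposal is correct and follows exactly the route the paper intends: the theorem is stated as an immediate consequence of the bound $\theta_3\geq\tfrac{3}{2\sqrt{2}}$, and your argument simply unwinds the definitions of $\mbox{\rm P}(3)^{\rm max}$ and $\mbox{\rm P}(3)^{\rm min}$ around the scaling inclusion $\mbox{\rm P}(3)^{\rm max}\subseteq\theta_3\cdot\mbox{\rm P}(3)^{\rm min}$, with the passage from the commuting selfadjoint triple to unitaries of orders $3$ and $2$ handled correctly via the identification of $\partial_{\rm e}\mbox{\rm P}(3)$ with the six vertices. The paper leaves this unwinding implicit, so your write-up is a faithful filling-in of the omitted details.
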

 
%%%%%%%%%%%%%%%
 \subsection{Exactness and the lifting property}
 
 A finite-dimensional operator system $\osr$ is \emph{exact} if, for every unital C$^*$-algebra $\A$ and ideal $\jay\subseteq\A$,
 \[
( \osr\omin\A/( \osr\omin\jay) \cong_{\textrm{OpSys}} \osr\omin (\A / \jay).
 \]
 A finite-dimensional operator system $\osr$ has the \emph{lifting property} if every ucp map $\osr\rightarrow\A/\jay$
 has a ucp lift to $\A$, for every unital C$^*$-algebra $\A$ and ideal $\jay\subseteq\A$. 
 
The
relation between exactness and the lifting property for finite-dimensional operator systems
is given by a theorem of Kavruk \cite[Theorem 6.6]{kavruk2014}: a finite-dimensional operator system
$\osr$ is exact if and only if its operator system dual $\osr^\delta$ has the lifting property.

\begin{theorem}\label{lifting exactness} If $\osr\in\{\mbox{\textrm{NC}}(d), \mbox{\textrm{NCP}}(k)\,|\,d,k\geq 3\}$, 
then $\osr$ has the lifting property, but is not an exact operator
system.
\end{theorem}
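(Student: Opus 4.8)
The plan is to treat the two families uniformly and to split the statement into its two assertions: the lifting property and the failure of exactness. The unifying structural input is that in both cases the operator system dual embeds completely order isomorphically into a finite-dimensional commutative C$^*$-algebra. For $\mbox{\rm NCP}(k)$ this is exactly Theorem \ref{dual ncp(k)}, which identifies $\mbox{\rm NCP}(k)^\delta$ with $\oss_{k,2}\subseteq\mathbb C^k\oplus\mathbb C^2$. For $\mbox{\rm NC}(d)$ one argues identically: by \cite[\S5]{farenick--kavruk--paulsen--todorov2018} (cf. Theorem \ref{quo-thm}), $\mbox{\rm NC}(d)$ is a complete quotient of a finite-dimensional diagonal algebra $\mathbb C^N$, so the adjoint of the complete quotient map is a complete order embedding of $\mbox{\rm NC}(d)^\delta$ into $(\mathbb C^N)^\delta\cong_{\sf OpSys}\mathbb C^N$, just as in the proof of Theorem \ref{dual ncp(k)}.

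For the lifting property, I would first observe that a finite-dimensional commutative C$^*$-algebra is nuclear, hence exact, and that exactness is inherited by operator subsystems; therefore $\osr^\delta$ is exact for every $\osr$ in the family. Kavruk's duality \cite[Theorem 6.6]{kavruk2014} states that a finite-dimensional operator system is exact if and only if its dual has the lifting property; applying this to $\osr^\delta$ and using $(\osr^\delta)^\delta\cong_{\sf OpSys}\osr$ gives that $\osr$ has the lifting property. This settles the first assertion.

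For non-exactness I would argue by contradiction, exploiting that $\osr$ already has the lifting property. Suppose $\osr$ were also exact. By Kavruk's tensor-product characterizations of these two properties \cite{kavruk2014}, exactness yields $\osr\otimes_{\rm el}\osr=\osr\omin\osr$, while the lifting property yields $\osr\otimes_{\rm er}\osr=\osr\omax\osr$. The coordinate flip $x\otimes y\mapsto y\otimes x$ is a complete order isomorphism $\osr\otimes_{\rm el}\osr\to\osr\otimes_{\rm er}\osr$ and restricts to an automorphism of $\osr\omin\osr$; transporting the first identity through it forces the matrix cones of $\osr\otimes_{\rm er}\osr$ and $\osr\omin\osr$ to coincide, so that $\osr\omin\osr=\osr\otimes_{\rm er}\osr=\osr\omax\osr$. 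This contradicts Theorem \ref{main result tp}, which asserts $\osr\omin\osr\neq\osr\omax\osr$ for $\osr$ in the family (here $d,k\geq3$ is essential, as it guarantees $\ftwo\subseteq G$). Hence $\osr$ cannot be exact.

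The main obstacle I anticipate is marshalling Kavruk's characterizations of exactness and the lifting property in terms of the enveloping tensor products $\otimes_{\rm el}$ and $\otimes_{\rm er}$ with the correct handedness, and then justifying the flip step cleanly: one must check that the flip intertwines $\otimes_{\rm el}$ with $\otimes_{\rm er}$ and fixes $\omin$ and $\omax$, so that an equality of cones for one handedness is converted into the equality needed to collide $\omin$ with $\omax$. Notably, this flip step is also what makes the argument robust to the el/er labeling: whichever of $\otimes_{\rm el},\otimes_{\rm er}$ is tied to exactness and whichever to the lifting property, tensoring $\osr$ with itself makes the two coincide and produces the same contradiction with Theorem \ref{main result tp}. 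I would emphasize that one does not need (and should not claim) full nuclearity from exactness plus lifting, since $\otimes_{\rm el}$ and $\otimes_{\rm er}$ differ for general tensorands; the self-tensor $\osr\otimes\osr$, where the flip is available, is precisely the case supplied by Theorem \ref{main result tp}.
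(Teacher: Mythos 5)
Your treatment of the lifting property is correct, although it runs in the opposite direction from the paper: you show $\osr^\delta$ is exact (being unitally completely order isomorphic to an operator subsystem of a finite-dimensional abelian C$^*$-algebra, via Theorem \ref{dual ncp(k)} and its analogue for $\mbox{\rm NC}(d)$) and then apply Kavruk's duality to $\osr^\delta$ together with $(\osr^\delta)^\delta\cong\osr$. The paper instead deduces the lifting property directly from the fact that $\osr$ is an OMAX operator system (Theorem \ref{acp}), citing \cite[Lemma 9.10]{kavruk2014}. Both routes are legitimate and rest on the same quotient description of $\osr$.

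Your argument for non-exactness, however, has a genuine gap: the claim that the lifting property yields $\osr\otimes_{\rm er}\osr=\osr\omax\osr$ is false. For a finite-dimensional operator system the lifting property is equivalent to $\osr\otimes_{\rm er}\ost=\osr\omin\ost$ for every $\ost$ --- this is precisely how the paper invokes \cite[Theorem 4.3]{kavruk2014} in the proof of Theorem \ref{no dcep}; the identity $\osr\otimes_{\rm er}\ost=\osr\omax\ost$ is only available when the \emph{second} factor is injective, which $\osr$ is not. With the correct characterizations, exactness gives $\otimes_{\rm el}=\omin$ and the lifting property gives $\otimes_{\rm er}=\omin$, so your flip merely converts one of these identities into the other and never reaches $\omax$; the hypotheses ``exact and lifting'' only yield $\omin=\otimes_{\rm el}=\otimes_{\rm er}$ on $\osr\otimes\osr$, which is perfectly compatible with $\omin\neq\omax$. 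Indeed $\mbox{\rm NC}(2)$ is a counterexample to your implication chain drawn from this very paper: it is exact (since $\cstar(\mathbb Z_2*\mathbb Z_2)$ is nuclear), it has the lifting property (it is OMAX by Theorem \ref{acp}), and yet $\mbox{\rm NC}(2)\omin\mbox{\rm NC}(2)\neq\mbox{\rm NC}(2)\omax\mbox{\rm NC}(2)$ by \cite[Corollary 7.12]{farenick--kavruk--paulsen--todorov2014}. So no argument of this shape can prove non-exactness. The paper's proof is entirely different: for $d,k\geq3$ the group contains $\ftwo$, so $\cstar(\ftwo)\subseteq\cstare(\osr)$; $\cstar(\ftwo)$ is not exact \cite{wassermann1990} and exactness passes to C$^*$-subalgebras, so $\cstare(\osr)$ is not exact; and exactness of $\osr$ would force exactness of $\cstare(\osr)$ by \cite[Corollary 9.6]{kavruk--paulsen--todorov--tomforde2013}. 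You would need to replace your second paragraph with an argument of this kind.
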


\begin{proof} The operator system $\osr$ has the lifting property because, by Theorem \ref{omax},
$\osr$ is an OMAX operator system \cite[Lemma 9.10]{kavruk2014}. To show $\osr$ is not exact, recall that 
$\cstar(\ftwo)$ admits an embedding into both $\cstar(\mathbb Z_k\ast\mathbb Z_2)$  and $\cstar(\ast_1^d\mathbb Z_2)$
because 
$\ftwo$ is a subgroup of both $*_1^d \mathbb Z_2$ and $\mathbb Z_k * \mathbb Z_2$. As $\cstar(\mathbb F_2)$ 
is not an exact C$^*$-algebra \cite{wassermann1990}, neither are 
$\cstar(\mathbb Z_k\ast\mathbb Z_2)$ or $\cstar(\ast_1^d\mathbb Z_2)$, since exactness passes to subalgebras.  
In other words, $\cstare(\osr)$ is not exact. Because $\cstare(\osr)$ is generated by the unitaries in $\osr$,
the operator system $\osr$ cannot be exact \cite[Corollary 9.6]{kavruk--paulsen--todorov--tomforde2013}.
\end{proof}

In relation to the problem of finding low-dimensional operator systems lacking the lifting property \cite{harris2025,scherer2026}, 
we note the following
result.

\begin{corollary}
	The $4$-dimensional operator system
\[
\mathcal S_{3,2}=\{(z_1,z_2,z_3,z_4,z_5)\in\mathbb C^5 \,|\, z_1+z_2+z_3=z_4+z_5\},
\]
with Archimedean order unit $e_{3,2}=(\frac{1}{3}, \frac{1}{3}, \frac{1}{3}, \frac{1}{2}, \frac{1}{2})$ and matrix ordering induced by the
inclusion of $\mathcal S_{3,2}$ into $\mathbb C^5$,
is exact, but does not have the lifting property.
\end{corollary}
\begin{proof}
Theorem \ref{dual ncp(k)} shows, with respect to the Archimedean order unit $e_{3,2}$ for $\mathcal S_{3,2}$, that
$\mathcal S_{3,2}\cong_{\textrm{OpSys}}\mbox{\textrm{NCP}}(3)^\delta$, for an 
appropriate choice of order unit for $\mbox{\textrm{NCP}}(3)^\delta$. As mentioned earlier, because
$\mbox{\textrm{NCP}}(3)=\mbox{\textrm{NCP}}(3)^{\textrm{max}}$, the operator system $\mbox{\textrm{NCP}}(3)$ has the lifting property; 
therefore, the operator system 
dual $\mbox{\textrm{NCP}}(3)^\delta$ is exact.
Hence, $\mathcal S_{3,2}$ is an
exact operator system.

Similarly, if $\mathcal S_{3,2}$ were to have the lifting property, then
its operator system dual would be exact; that is,
$\mbox{\textrm{NCP}}(3)$ would be exact. However, by Theorem \ref{lifting exactness}, 
$\mbox{\textrm{NCP}}(3)$ is not an exact operator system.
\end{proof}

Similar arguments, together with \cite[Theorem 5.9]{farenick--kavruk--paulsen--todorov2018}, yield:

\begin{corollary}
For every $k,d\geq 3$ and appropriate choices of Archimedean order units and matrix orderings induced by inclusions
into $\mathbb C^d$ and $\mathbb C^{k+2}$, the operator systems  
\[
\mathcal R_d=\{(\alpha_1+\beta_1, \alpha_1-\beta_1,\dots,\alpha_d+\beta_d,\alpha_d-\beta_d) \in\mathbb C^{2d}\,|\, \alpha_j,\beta_j\in\mathbb C, j=1,\dots,d\}
\]
and
\[
\mathcal S_{k,2}=\{(z_1,z_2,\dots,z_{k+2})\in\mathbb C^{k+2} \,|\, z_1+\cdots+z_{k}=z_{k+1}+z_{k+2}\},
\]
are exact, but
do not have the lifting property.
\end{corollary}

%%%%%%%%%%%%%%%
 \subsection{Controlling the codomain of ucp extensions}
 
 If one has an injective von Neumann algebra $\mathcal N$ and an inclusion $\osr\subseteq\ost$ of operator systems,
 then every ucp map $\phi:\osr\rightarrow\mathcal N$ extends to a ucp map $\Phi:\ost\rightarrow\mathcal N$. However, usually there
 is little one can say about the range of an extension $\Phi$ relative to the range of $\phi$. Interestingly, noncommutative cubes and prisms
 admit ucp extensions to the C$^*$-envelopes in which the codomain of the extension is the von Neumann algebra generated by the range of
 the original map.
 
 \begin{theorem}\label{control} If $\osr\in\{\mbox{\textrm{NC}}(d),\mbox{\textrm{NCP}}(k)\,|\,d,k\in\mathbb N\}$, and if $\phi:\osr\rightarrow\B(\H)$
 is a ucp map, then there exists a ucp map $\Phi:\cstare(\osr)\rightarrow\phi(\osr)^{''}$, the von Neumann algebra generated by
 $\phi(\osr)$, such that $\Phi_{\vert\osr}=\phi$.
 \end{theorem}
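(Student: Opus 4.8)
The plan is to perform the relevant dilation construction entirely inside the von Neumann algebra $\mathcal N=\phi(\osr)''$, so that the compression recovering $\phi$ automatically takes values in $\mathcal N$ rather than in all of $\B(\H)$. No injectivity of $\mathcal N$ is needed, because the extension is produced explicitly through the universal property of the free product, and not by an abstract injective-envelope argument.

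I would first dispose of $\osr=\mbox{\rm NC}(d)$, which is transparent. Each $a_j=\phi(u_j)$ is a selfadjoint contraction lying in $\phi(\osr)\subseteq\mathcal N$, so $(1-a_j^2)^{1/2}\in\mathcal N$ by functional calculus, and the Halmos symmetry
\[
\tilde u_j=\begin{bmatrix} a_j & (1-a_j^2)^{1/2} \\ (1-a_j^2)^{1/2} & -a_j\end{bmatrix}
\]
belongs to $\M_2(\mathcal N)$. Since $\M_2(\mathcal N)$ is a von Neumann algebra, the C$^*$-algebra generated by $\tilde u_1,\dots,\tilde u_d$ stays inside $\M_2(\mathcal N)$; hence the universal property of $\cstar(*_1^d\mathbb Z_2)=\cstare(\mbox{\rm NC}(d))$ (Theorem \ref{cstar-env}) gives a unital $*$-homomorphism $\pi:\cstare(\osr)\rightarrow\M_2(\mathcal N)$ with $\pi(u_j)=\tilde u_j$. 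Compressing to the upper-left corner, $\Phi(x)=\pi_{11}(x)$, produces a ucp map $\Phi:\cstare(\osr)\rightarrow\mathcal N$ with $\Phi(u_j)=a_j$ and $\Phi(1)=1$, so that $\Phi_{\vert\osr}=\phi$.

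For $\osr=\mbox{\rm NCP}(k)$ I would rerun the dilation from the proof of Lemma \ref{rdci_prism}, checking that every operator it manufactures lies in a matrix algebra over $\mathcal N$. The decisive observation is that the spectral data of $\phi(w)$ is internal to $\phi(\osr)$: writing $\phi=s^*\rho s$ in Stinespring form with spectral projections $p_j$ of the order-$k$ unitary $\rho(w)$, and letting $f_0,\dots,f_{k-1}$ be the Lagrange polynomials with $f_i(\omega^j)=\delta_{ij}$ on the $k$-th roots of unity, the positive contractions $h_j=s^*p_js$ satisfy $h_j=s^*f_j(\rho(w))s=\phi(f_j(w))$. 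Because each $f_j(w)$ lies in $\mbox{\rm Span}\{1,w,\dots,w^{k-1}\}\subseteq\osr$, we conclude $h_j\in\phi(\osr)\subseteq\mathcal N$, and therefore $h_j^{1/2}\in\mathcal N$. Consequently the isometry $z:\H\rightarrow\H^{\oplus k}$ with column entries $h_j^{1/2}$, the selfadjoint contraction $b=z\phi(v)z^*$, its Halmos dilation $\tilde v$, and the order-$k$ unitary $\tilde w=(\bigoplus_j\omega^j1_\H)\oplus 1$ all lie in matrix algebras over $\mathcal N$. The universal property of $\cstar(\mathbb Z_k*\mathbb Z_2)=\cstare(\mbox{\rm NCP}(k))$ then yields $\pi$ with image in $\M_{2k}(\mathcal N)$, and the compression $\Phi(x)=z^*\pi_{11}(x)z$ is the desired ucp map $\cstare(\osr)\rightarrow\mathcal N$; one checks $\Phi(w)=\sum_j\omega^jh_j=\phi(w)$ and $\Phi(v)=z^*bz=\phi(v)$, so $\Phi_{\vert\osr}=\phi$.

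The step demanding real care---and the heart of the argument---is the identity $h_j=\phi(f_j(w))$, which is exactly what confines the Stinespring dilation of $\phi(w)$ to $\mathcal N$. This succeeds because $\mbox{\rm NCP}(k)$ already contains all powers $1,w,\dots,w^{k-1}$, so every spectral projection of $\rho(w)$ descends through the compression $s^*(\cdot)s$ into $\phi(\osr)$; for the cube the corresponding point is free, as the Halmos dilation of $a_j$ is assembled from $a_j$ alone. I expect no further obstacles: once all dilating operators are recognized as matrices over $\mathcal N$, the compressions land in $\mathcal N$ because the corners of $\M_n(\mathcal N)$ are $\mathcal N$, while unitality and complete positivity of $\Phi$ follow from $z$ being an isometry and $\pi_{11}$ being a corner of a unital $*$-homomorphism.
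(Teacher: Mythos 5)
Your proof is correct, but it takes a genuinely different route from the paper's. The paper disposes of Theorem \ref{control} in two lines: by Lemma \ref{rdci_prism}, $\osr\oc\ost\subseteq_{\rm coi}\cstare(\osr)\omax\ost$ for every operator system $\ost$, and this relative weak injectivity of the pair $\left(\osr,\cstare(\osr)\right)$ is converted into the controlled-codomain extension property by citing \cite[Theorem 4.1(3)]{bhattacharya2014}. You instead build the extension explicitly and track that every operator manufactured by the dilation lies in a matrix algebra over $\mathcal N=\phi(\osr)''$, so that the final compression automatically lands in $\mathcal N$. The decisive identity you isolate --- $h_j=s^*p_js=\phi\left(f_j(w)\right)$ for the Lagrange polynomials $f_j$, which works precisely because $\mbox{\rm NCP}(k)$ contains all powers $1,w,\dots,w^{k-1}$ --- is the same mechanism that drives the paper's proof of Lemma \ref{rdci_prism}; in effect you rerun that lemma's dilation with the codomain bookkeeping made explicit rather than passing through the abstract tensor-product characterisation. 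What your route buys is a self-contained, citation-free argument that makes visible \emph{why} the extension takes values in $\phi(\osr)''$ (everything is functional calculus applied to elements of $\phi(\osr)$, and corners of $\M_n(\mathcal N)$ lie in $\mathcal N$); what it gives up is the identification of the result as an instance of a general principle. One small completion: since $\mbox{\rm NCP}(k)=\mbox{\rm Span}\{1,w,\dots,w^{k-1},v\}$, verifying $\Phi_{\vert\osr}=\phi$ requires $\Phi(w^\ell)=\phi(w^\ell)$ for all $1\leq\ell\leq k-1$, not only $\ell=1$; this follows from the same computation,
\[
\Phi(w^\ell)=z^*y^\ell z=\sum_{j}\omega^{j\ell}h_j=s^*\rho(w^\ell)s=\phi(w^\ell),
\]
but it should be recorded, as $w^2,\dots,w^{k-1}$ are not in the linear span of $\{1,w,v\}$.
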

 
 \begin{proof} Under the stated hypothesis, 
 $\osr\oc\ost\subseteq_{\textrm{coi}}\cstare(\osr)\omax\ost$, for every operator system $\ost$, 
 by Lemma \ref{rdci_prism}. Therefore, the assertion about ucp extensions follows from 
 \cite[Theorem 4.1(3)]{bhattacharya2014}.
 \end{proof}
 
 A general property an operator system $\osr$ may have, related to the property exhibited in Theorem \ref{control}, is
 the \emph{double commutant expectation property} \cite{kavruk--paulsen--todorov--tomforde2013}, which is the 
 property that, for every unital complete order embedding 
 $\kappa:\osr\rightarrow\B(\H)$, there exists a ucp map $\phi:\B(\H)\rightarrow\kappa(\osr)^{''}$ such that $\phi\circ\kappa=\kappa$.
 
 \begin{theorem}\label{no dcep} If $\osr\in\{\mbox{\textrm{NC}}(d),\mbox{\textrm{NCP}}(k)\,|\,d,k\geq3\}$, then $\osr$ does not
 have the double commutant expectation property.
 \end{theorem}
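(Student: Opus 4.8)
The plan is to derive the failure of the double commutant expectation property directly from the tensorial inequality already obtained in Theorem \ref{main result tp}, by way of the standard tensor-product characterisation of the DCEP. Recall from \cite{kavruk--paulsen--todorov--tomforde2013} (see also \cite{kavruk2014}) that an operator system $\osr$ has the double commutant expectation property if and only if $\osr\oc\ost = \osr\omax\ost$ for \emph{every} operator system $\ost$. Since $\osr$ here is finite-dimensional, this characterisation applies without caveat, and to prove that $\osr$ fails the DCEP it is enough to exhibit a single operator system $\ost$ for which the commuting and maximal tensor products do not agree.

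Such a witness is furnished by $\osr$ itself. Because $\osr\in\{\mbox{\rm NC}(d),\mbox{\rm NCP}(k)\,|\,d,k\geq3\}$, the pair $(\osr,\osr)$ falls within the family to which Theorem \ref{main result tp} applies; instantiating that theorem at $\ost=\osr$ gives
\[
\osr\oc\osr \,\neq\, \osr\omax\osr .
\]
Hence the equality $\osr\oc\ost=\osr\omax\ost$ already fails for $\ost=\osr$, and so, by the characterisation above, $\osr$ cannot possess the DCEP. The three steps are thus: (i) invoke the equivalence between the DCEP and the equality of $\oc$ and $\omax$ against all operator systems; (ii) apply Theorem \ref{main result tp} with $\ost=\osr$; and (iii) conclude that the universal equality is violated.

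The step I expect to be the main obstacle is the correct identification and citation of the tensorial characterisation of the DCEP, together with the (minor but essential) observation that the DCEP demands equality for \emph{all} $\ost$, so that a single failing test object suffices to defeat it. Everything beyond this is a direct quotation of Theorem \ref{main result tp}. It is worth stressing the contrast with Theorem \ref{control}: the controlled-extension phenomenon established there corresponds to the coi-inclusion $\osr\oc\ost\subseteq_{\rm coi}\cstare(\osr)\omax\ost$ of Lemma \ref{rdci_prism}, which is a strictly weaker requirement than the equality $\osr\oc\ost=\osr\omax\ost$ underlying the DCEP; there is therefore no tension between $\osr$ enjoying the former while failing the latter.
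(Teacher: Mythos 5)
There is a genuine gap, and it lies exactly where you predicted: in the tensorial characterisation of the DCEP. The double commutant expectation property is \emph{not} equivalent to $(\rm c,max)$-nuclearity. The characterisation in \cite{kavruk--paulsen--todorov--tomforde2013} and \cite{kavruk2014} is that $\osr$ has the DCEP if and only if $\osr\otimes_{\rm el}\ost=\osr\oc\ost$ for every operator system $\ost$ (equivalently, $\osr\omin\cstar(\mathbb F_\infty)=\osr\omax\cstar(\mathbb F_\infty)$); the equality $\osr\oc\ost=\osr\omax\ost$ for all $\ost$ is a different property, and neither implication between the two holds. In one direction, every unital C$^*$-algebra $\A$ satisfies $\A\oc\ost=\A\omax\ost$ for all $\ost$, yet $\cstar(\ftwo)$ fails the DCEP (for C$^*$-algebras the DCEP coincides with the WEP, which $\cstar(\ftwo)$ lacks). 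In the other direction --- the one that actually breaks your argument --- the operator system $\mbox{\rm NC}(2)$ is $(\rm min,c)$-nuclear (because $\cstar(\mathbb Z_2*\mathbb Z_2)$ is nuclear), hence $(\rm el,c)$-nuclear, hence has the DCEP; nevertheless $\mbox{\rm NC}(2)\oc\mbox{\rm NC}(2)\neq\mbox{\rm NC}(2)\omax\mbox{\rm NC}(2)$ by \cite[Corollary 7.12]{farenick--kavruk--paulsen--todorov2014}, the very fact the paper uses in the proof of Theorem \ref{main result tp}. So exhibiting a single $\ost$ with $\osr\oc\ost\neq\osr\omax\ost$ does not defeat the DCEP, and your step (iii) does not follow.

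Consequently you have also used the wrong half of Theorem \ref{main result tp}: the inequality that is relevant to the DCEP is $\osr\omin\osr\neq\osr\oc\osr$, not $\osr\oc\osr\neq\osr\omax\osr$. The paper's proof supplies the missing bridge between $\otimes_{\rm el}$ and $\omin$: since $\osr$ has the lifting property (Theorem \ref{lifting exactness}), it is $(\rm min,er)$-nuclear, so $\osr\otimes_{\rm er}\osr=\osr\omin\osr$ and, by exchanging tensor factors, $\osr\otimes_{\rm el}\osr=\osr\omin\osr$. If $\osr$ had the DCEP, the correct characterisation would then give $\osr\omin\osr=\osr\otimes_{\rm el}\osr=\osr\oc\osr$, contradicting Theorem \ref{main result tp}. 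Your closing remark contrasting Theorem \ref{control} with the DCEP is sound, but the core deduction needs to be rebuilt along these lines (or via the Kirchberg-type test object $\cstar(\mathbb F_\infty)$).
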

 
 \begin{proof} By Theorem \ref{lifting exactness}, $\osr$ has the lifting property. Therefore, for such an operator system, the double
 commutant expectation property is equivalent, by \cite[Section 4.4]{kavruk2014}, to the property that 
\[
\mathcal R\otimes_{\textrm{el}}\mathcal T=\mathcal R\otimes_{\textrm c}\mathcal T,
\]
for every operator system $\ost$, where $\osr\otimes_{\textrm{el}}\ost$ denotes the left injective tensor product 
of $\osr$ and $\ost$ induced by the canonical inclusion of $\osr\otimes\ost$ into $\mathcal I(\osr)\omax\ost$.

With the right injective tensor product $\osr\otimes_{\textrm{er}}\ost$
of $\osr$ and $\ost$ induced by the canonical inclusion of $\osr\otimes\ost$ into $\osr\omax\mathcal I(\ost)$,
the fact that $\osr$ has the lifting property implies, by \cite[Theorem 4.3]{kavruk2014}, that
\[
\mathcal R\otimes_{\textrm{er}}\mathcal R=\mathcal R\otimes_{\textrm{min}}\mathcal R.
\]
Thus, by exchanging tensor factors, $\osr\otimes_{\textrm{el}}\osr=\osr\omin\osr$. 
Therefore, if $\osr$ had the double commutant expectation property,
then we would have both $\osr\otimes_{\textrm{el}}\osr=\osr\oc\osr$
and $\osr\otimes_{\textrm{el}}\osr=\osr\omin\osr$, in contradiction to $\osr\omin\osr\not=\osr\oc\osr$.
 \end{proof}

 %%%%%%%%%%%%%%%
 \subsection{Minimal C$^*$-covers and automatic complete positivity with maximal tensor products}
  
 \begin{theorem} If $\osr_j\in\{\mbox{\textrm{NC}}(d),\mbox{\textrm{NCP}}(k)\,|\,d,k\geq 3\}$, for $j=1,2$, 
 then
 \[
 \cstare(\osr_1\omax\osr_2) \not= \cstare(\osr_1)\omax\cstare(\osr_2).
 \]
 \end{theorem}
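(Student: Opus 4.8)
The plan is to argue by contradiction, turning a hypothetical equality of C$^*$-covers into the identity $\osr_1\omax\osr_2=\osr_1\oc\osr_2$, which is forbidden by Theorem \ref{main result tp}.

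First I would note that, by Theorem \ref{cstar-env}, each $\cstare(\osr_j)$ is a (full) group C$^*$-algebra, so $\cstare(\osr_1)\omax\cstare(\osr_2)$ is a genuine C$^*$-algebra to which the canonical algebraic embedding $\iota:\osr_1\otimes\osr_2\to\cstare(\osr_1)\otimes\cstare(\osr_2)$ maps. Suppose, for contradiction, that $\cstare(\osr_1)\omax\cstare(\osr_2)$, equipped with $\iota$, is the C$^*$-envelope of $\osr_1\omax\osr_2$. Since the canonical embedding of an operator system into any of its C$^*$-covers is a complete order embedding, $\iota$ would then be a complete order embedding $\osr_1\omax\osr_2\to\cstare(\osr_1)\omax\cstare(\osr_2)$. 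By the very definition of the essential tensor product as the operator system structure induced by the inclusion into $\cstare(\osr_1)\omax\cstare(\osr_2)$, this is precisely the assertion $\osr_1\omax\osr_2=\osr_1\oess\osr_2$.

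Next I would show that $\oess$ coincides with $\oc$ for these operator systems, so that the identity of the previous paragraph upgrades to $\osr_1\omax\osr_2=\osr_1\oc\osr_2$. Applying Lemma \ref{rdci_prism} to the first tensor factor (with $\mathcal R=\osr_2$) gives $\osr_1\oc\osr_2\subseteq_{\rm coi}\cstare(\osr_1)\oc\osr_2$, and applying it again to the second factor (with $\mathcal R=\cstare(\osr_1)$, using symmetry of $\oc$) gives $\osr_2\oc\cstare(\osr_1)\subseteq_{\rm coi}\cstare(\osr_2)\oc\cstare(\osr_1)$. Because $\cstare(\osr_1)$ and $\cstare(\osr_2)$ are unital C$^*$-algebras, $\cstare(\osr_2)\oc\cstare(\osr_1)=\cstare(\osr_1)\omax\cstare(\osr_2)$; composing the two complete order embeddings exhibits $\iota$ as a complete order embedding $\osr_1\oc\osr_2\to\cstare(\osr_1)\omax\cstare(\osr_2)$, which is exactly $\osr_1\oc\osr_2=\osr_1\oess\osr_2$. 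Combined with the contradiction hypothesis, this forces $\osr_1\omax\osr_2=\osr_1\oc\osr_2$, contradicting the strict inequality $\osr_1\oc\osr_2\neq\osr_1\omax\osr_2$ of Theorem \ref{main result tp}.

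The step I expect to be most delicate is the bookkeeping in the second paragraph: one must use the convention that two C$^*$-covers are regarded as equal only through a $*$-isomorphism fixing the common operator system, so that the hypothetical equality genuinely forces $\iota$ (rather than some unrelated $*$-isomorphism of the underlying C$^*$-algebras) to be a complete order embedding. Once this is granted, the identification $\oess=\oc$ through two applications of Lemma \ref{rdci_prism}, together with the appeal to Theorem \ref{main result tp}, is routine.
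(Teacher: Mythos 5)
Your argument is correct and follows essentially the same route as the paper: both reduce the claim to the identity $\osr_1\oess\osr_2=\osr_1\oc\osr_2$ obtained from two applications of Lemma \ref{rdci_prism}, and then invoke $\osr_1\oc\osr_2\neq\osr_1\omax\osr_2$ from Theorem \ref{main result tp}. The only difference is that where you derive the implication ``canonical C$^*$-envelope equality $\Rightarrow\ \omax=\oess$'' directly from the definition of a C$^*$-cover, the paper cites the equivalence in \cite[Theorem 3.5]{farenick--kavruk--paulsen--todorov2014}, which in particular settles the convention issue you rightly flag about the identification being the one fixing the operator system.
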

 
 \begin{proof} As defined in \cite{farenick--kavruk--paulsen--todorov2014}, the operator system tensor product structure
 $\otimes_{\textrm ess}$ on the algebraic tensor product $\osr_1\otimes\osr_2$ is the one induced by the canonical inclusion of
 $\osr_1\otimes\osr_2$ in $\cstare(\osr_1)\omax\cstare(\osr_2)$. 
 By Lemma \ref{rdci_prism}, we have that 
 \[
 \osr_1\oc\osr_2\subseteq_{\textrm{coi}}\cstare(\osr_1)\oc\osr_2\subseteq_{\textrm{coi}}\cstare(\osr_1)\omax\cstare(\osr_2),
 \]
 and so $\osr_1\otimes_{\textrm ess}\osr_2=\osr_1\oc\osr_2$. By \cite[Theorem 3.5]{farenick--kavruk--paulsen--todorov2014}, for any operator system
 tensor product structure $\otimes_\sigma$ such that $\oss\otimes_\sigma\ost\subseteq\oss\otimes_{\textrm ess}\ost$, for all $\oss$ and $\ost$,
 $\osr_1\otimes_{\textrm ess}\osr_2=\osr_1\otimes_\sigma\osr_2$ 
  if and only if $\cstare(\osr_1\otimes_\sigma\osr_2) = \cstare(\osr_1)\omax\cstare(\osr_2)$. Because
   $\osr_1\oc\osr_2\not=\osr_1\omax\osr_2$, we deduce $\cstare(\osr_1\omax\osr_2) \not= \cstare(\osr_1)\omax\cstare(\osr_2)$, as asserted.
 \end{proof}
 
 It is worth noting another outcome of Theorem \ref{main result tp}, as it illustrates the loss of algebraic information when passing to tensor products.
 
 \begin{corollary} If
 $\osr_j\in\{\mbox{\textrm{NC}}(d),\mbox{\textrm{NCP}}(k)\,|\,d,k\geq 3\}$, for $j=1,2$, and if $u_j\in\osr_j$ is one of the 
 canonical unitary generators of $\cstare(\osr_j)$, then neither $u_1\otimes 1$ nor $1\otimes u_2$ is a unitary element of
 $\cstare(\osr_1\omax\osr_2)$.
 \end{corollary}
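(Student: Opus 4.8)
The plan is to argue by contraposition, extracting the statement from the theorem immediately above together with the tool used to prove it. Suppose some canonical generator, say $u_1\otimes 1$, were unitary in $D:=\cstare(\osr_1\omax\osr_2)$ (the argument for $1\otimes u_2$ is identical). The preceding theorem gives $\cstare(\osr_1\omax\osr_2)\neq\cstare(\osr_1)\omax\cstare(\osr_2)$, and by \cite[Theorem 3.5]{farenick--kavruk--paulsen--todorov2014} this inequality is equivalent both to $\osr_1\omax\osr_2\neq\osr_1\oess\osr_2$ and to the assertion that the canonical tensors $u_i\otimes 1$ and $1\otimes v_j$, as $u_i,v_j$ range over the group generators of $G_1$ and $G_2$, are $\emph{not all}$ unitary in $D$. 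Thus the one step that is not immediate is the upgrade from ``not all of these tensors are unitary'' to ``none of them is.''

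First I would use symmetry to reduce the number of cases. The generator-permuting automorphisms of $\ast_1^d\mathbb Z_2$, and the automorphisms $w\mapsto w^a$ with $\gcd(a,k)=1$ of $\mathbb Z_k\ast\mathbb Z_2$, restrict to unital complete order automorphisms of $\osr_1$ and $\osr_2$; tensoring with the identity and passing to $C^\ast$-envelopes, they induce $\ast$-automorphisms of $D$, and when $\osr_1$ and $\osr_2$ have the same type the flip $x\otimes y\mapsto y\otimes x$ provides another. Under these automorphisms $u_1\otimes 1$ is unitary in $D$ if and only if every $u_i\otimes 1$ is, and similarly within the other factor, so the generators collapse to at most two orbits: the order-$2$ symmetries and, in the prism case, the order-$k$ generator. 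In the purely cubic case $\osr_1=\osr_2=\mbox{\rm NC}(d)$ all generators lie in a single orbit, and the conclusion is already immediate from the previous paragraph.

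The main obstacle is to link the remaining orbits, i.e.\ to show that unitarity of one generator forces unitarity of all of them. The mechanism I would use is the canonical ucp map $\hat\Theta:D\to\B(\H)$ obtained by extending, via Arveson, the canonical ucp map $\osr_1\omax\osr_2\to\osr_1\oess\osr_2\subseteq_{\rm coi}\cstare(\osr_1)\omax\cstare(\osr_2)\subseteq\B(\H)$, which is ucp because $\omax\subseteq\oess$. Since $\hat\Theta(u_1\otimes1)=u_1\otimes1$ is already unitary in $\cstare(\osr_1)\omax\cstare(\osr_2)$, the assumed unitarity of $u_1\otimes1$ in $D$ places it in the multiplicative domain of $\hat\Theta$; I would then try to spread this through the factorisation $u_i\otimes v_j=(u_i\otimes1)(1\otimes v_j)$ of \cite[Lemma 3.4]{farenick--kavruk--paulsen--todorov2014} so that $\hat\Theta$ becomes a $\ast$-homomorphism and $D$ collapses onto $\cstare(\osr_1)\omax\cstare(\osr_2)$, contradicting the preceding theorem. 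The genuinely delicate point is exactly this propagation: multiplication in the envelope $D$ need not agree with that of $\cstare(\osr_1)\omax\cstare(\osr_2)$, so membership of $u_1\otimes1$ in the multiplicative domain does not obviously drag $1\otimes v_j$ in after it, and in a prism the order-$k$ and order-$2$ generators are not interchanged by any automorphism. I expect this linking to require the full per-generator strength of \cite[Theorem 3.5]{farenick--kavruk--paulsen--todorov2014} (equivalently, the multiplicative-domain bookkeeping carried out with care); once the resulting all-or-nothing behaviour is established, the unitarity of any single generator contradicts $\cstare(\osr_1\omax\osr_2)\neq\cstare(\osr_1)\omax\cstare(\osr_2)$, and the corollary follows.
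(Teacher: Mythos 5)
You have read the situation correctly: the paper's entire proof of this corollary is the one‑line deduction you describe, namely that $\cstare(\osr_1\omax\osr_2)\neq\cstare(\osr_1)\omax\cstare(\osr_2)$ (the theorem immediately preceding) together with \cite[Theorem 3.5]{farenick--kavruk--paulsen--todorov2014} yields the conclusion, and you have put your finger on the only delicate point, the passage from ``not all of the elements $u_i\otimes 1$, $1\otimes v_j$ are unitary in $\cstare(\osr_1\omax\osr_2)$'' to ``none of them is.'' The problem is that your proposal diagnoses this step but does not carry it out, and the two devices you offer do not combine to close it. The automorphism argument is sound within an orbit, but, as you concede, no automorphism of $\mathbb Z_k*\mathbb Z_2$ interchanges the order-$k$ generator $w$ with the order-$2$ generator $v$, so whenever a factor is $\mbox{\rm NCP}(k)$ there remain orbits that symmetry cannot link. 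The multiplicative-domain argument then stalls exactly where you say it does: placing $u_1\otimes 1$ in the multiplicative domain of $\hat\Theta$ gives $\hat\Theta\left((u_1\otimes 1)x\right)=(u_1\otimes 1)\hat\Theta(x)$ for every $x$, but this neither makes $1\otimes v$ (or any other generator) unitary in $\cstare(\osr_1\omax\osr_2)$ nor lets you evaluate products such as $(u_1^*\otimes v^*)(u_1\otimes v)$ in the envelope, and without \emph{all} generators unitary you cannot invoke the universal property of $\cstar(G_1)\omax\cstar(G_2)$ to manufacture the $*$-homomorphism whose existence would contradict the preceding theorem. Your closing sentence --- ``once the resulting all-or-nothing behaviour is established\dots'' --- is an admission that the decisive step remains open, so as written this is a proof outline with a hole at its centre rather than a proof.

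To repair it you need the per-generator content of \cite[Theorem 3.5]{farenick--kavruk--paulsen--todorov2014} rather than the bare three-way equivalence: either quote, and verify against the reference, a statement of the form ``if some single $u\otimes 1$ or $1\otimes v$ is unitary in $\cstare(\osr_1\otimes_\sigma\osr_2)$, then $\osr_1\otimes_\sigma\osr_2=\osr_1\oess\osr_2$,'' which is what the paper's one-line citation is implicitly leaning on, or supply an independent argument that unitarity of a single canonical generator already forces $\cstare(\osr_1\omax\osr_2)=\cstare(\osr_1)\omax\cstare(\osr_2)$. Until one of these is in place, the corollary does not follow from the material you have assembled, even though your identification of where the work lies is accurate and more explicit than what the paper itself records.
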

 
 \begin{proof} By \cite[Theorem 3.5]{farenick--kavruk--paulsen--todorov2014}, 
 $ \cstare(\osr_1\omax\osr_2) \not= \cstare(\osr_1)\omax\cstare(\osr_2)$ implies the asserted claims regarding 
 $u_1\otimes 1$ and $1\otimes u_2$.
 \end{proof}
  
 Lastly, we note:
 
 \begin{theorem}\label{acp tp}
 If $\osr_j\in\{\mbox{\textrm{NC}}(d),\mbox{\textrm{NCP}}(k)\,|\,d,k\geq 3\}$, then every positive linear map on $\osr_1\omax\osr_2$
 is completely positive. That is, 
 \[
 \osr_1\omax\osr_2 = (\osr_1\omax\osr_2)^{\textrm{max}}.
 \]
 \end{theorem}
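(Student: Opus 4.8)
The plan is to run the dualization argument of Theorem~\ref{acp}, now with the domain $\osr_1\omax\osr_2$ in place of a single $\mbox{\rm NC}(d)$ or $\mbox{\rm NCP}(k)$, exploiting the tensor-product duality $(\osr_1\omax\osr_2)^\delta\cong_{\sf OpSys}\osr_1^\delta\omin\osr_2^\delta$ recalled above. The key observation driving the argument is that this dual is an operator subsystem of an abelian C$^*$-algebra, so that positive maps \emph{into} it are forced to be completely positive; transferring this back through the duality then gives complete positivity of every positive map \emph{out of} $\osr_1\omax\osr_2$.

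First I would fix a positive linear map $\phi:\osr_1\omax\osr_2\rightarrow\ost$ and reduce to the case that $\ost$ is finite-dimensional. Since $\osr_1\omax\osr_2$ is finite-dimensional, its image under $\phi$ is finite-dimensional, so we may replace $\ost$ by the finite-dimensional operator subsystem generated by $1_\ost$ together with the range of $\phi$, on which $\phi$ remains positive. With both systems finite-dimensional, the adjoint map $\phi^\delta:\ost^\delta\rightarrow(\osr_1\omax\osr_2)^\delta$ is positive, and $\phi$ is completely positive if and only if $\phi^\delta$ is, via the canonical identification $\phi^{\delta\delta}=\phi$ used already in the proof of Theorem~\ref{acp}.

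Next I would identify the codomain of $\phi^\delta$. By the duality $(\osr_1\omax\osr_2)^\delta\cong_{\sf OpSys}\osr_1^\delta\omin\osr_2^\delta$, together with Theorem~\ref{dual ncp(k)}, its cube analogue \cite[Theorem 5.9]{farenick--kavruk--paulsen--todorov2018}, and Lemma~\ref{dual_C^d}, each dual $\osr_j^\delta$ admits a unital complete order embedding into a (finite-dimensional) abelian C$^*$-algebra $\mathcal{A}_j$. Since $\omin$ is injective with respect to complete order embeddings \cite{kavruk--paulsen--todorov--tomforde2011}, we obtain
\[
\osr_1^\delta\omin\osr_2^\delta \,\coisubset\, \mathcal{A}_1\omin\mathcal{A}_2 ,
\]
and $\mathcal{A}_1\omin\mathcal{A}_2$ is again an abelian C$^*$-algebra. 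Composing $\phi^\delta$ with this embedding yields a positive linear map into a commutative C$^*$-algebra; as positive maps into commutative C$^*$-algebras are automatically completely positive (each point evaluation is a state, hence completely positive, so positivity of the composite holds at every matrix level), the composite is completely positive. Because the embedding is a complete order embedding, $\phi^\delta$ is then completely positive, and hence so is $\phi=\phi^{\delta\delta}$.

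I expect no genuine obstacle here: the substantive input—injectivity of $\omin$, the tensor-product duality $(\osr\omax\ost)^\delta\cong_{\sf OpSys}\osr^\delta\omin\ost^\delta$, and the fact that both duals lie inside commutative C$^*$-algebras—is entirely supplied by the cited results and by Theorem~\ref{dual ncp(k)}. The only steps requiring care rather than difficulty are the reduction to finite-dimensional $\ost$ and the bookkeeping of Archimedean order units along the duality isomorphism, namely confirming that the order-unit choice making $(\osr_1\omax\osr_2)^\delta\cong_{\sf OpSys}\osr_1^\delta\omin\osr_2^\delta$ hold is compatible with the coi-embedding into $\mathcal{A}_1\omin\mathcal{A}_2$.
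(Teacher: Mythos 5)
Your argument is correct and is essentially the paper's own proof: the paper likewise invokes $(\osr_1\omax\osr_2)^\delta\cong_{\sf OpSys}\osr_1^\delta\omin\osr_2^\delta$ from \cite{farenick--paulsen2012}, observes via Theorem~\ref{dual ncp(k)} (and its cube analogue) that this dual sits completely order isomorphically inside an abelian C$^*$-algebra, and then reruns the dualization argument of Theorem~\ref{acp}. Your write-up merely makes explicit the steps the paper leaves implicit (injectivity of $\omin$, the reduction to finite-dimensional codomain, and $\phi^{\delta\delta}=\phi$), all of which are handled correctly.
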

 
 \begin{proof} 
 As shown in \cite{farenick--kavruk--paulsen--todorov2014,farenick--kavruk--paulsen--todorov2018}, 
 the operator systems $\mbox{\textrm{NCP}}(k)$ arise from complete quotient maps on $\mathbb C^{k+2}$,
 while the operator systems $\mbox{\textrm{NC}}(d)$ arise from complete quotient maps on $\mathbb C^{d}$. Thus, if 
 $\osr_j\in\{\mbox{\textrm{NC}}(d),\mbox{\textrm{NCP}}(k)\,|\,d,k\geq 3\}$ and $\phi_j:\mathbb C^{n_j}\rightarrow\osr_j$ are the complete quotient maps
 given in \cite{farenick--kavruk--paulsen--todorov2014,farenick--kavruk--paulsen--todorov2018}, for $j=1,2$,
 then $\phi_1\otimes\phi_2$ is a complete
 quotient map of $\mathbb C^{n_1}\omax\mathbb C^{n_2}$ onto $\osr_1\omax\osr_2$ \cite[Corollary 1.13]{farenick--paulsen2012}.
 Thus, positive linear maps $\psi:\osr_1\omax\osr_2\rightarrow\ost$ induce positive linear maps 
 $\psi\circ(\phi_1\otimes\phi_2)$ on $\mathbb C^{n_1}\omax\mathbb C^{n_2}$. Since
 $\mathbb C^{n_1}\omax\mathbb C^{n_2}=\mathbb C^{n_1}\omin\mathbb C^{n_2}$ is an abelian C$^*$-algebra, such positive
 linear maps on $\mathbb C^{n_1}\omax\mathbb C^{n_2}$ are completely positive. Because $\phi_1\otimes\phi_2$ is a complete quotient map,
 strictly positive matrices over $\osr_1\omax\osr_2$ lift to strictly positive matrices over $\mathbb C^{n_1}\omax\mathbb C^{n_2}$, implying
$\psi$ is completely positive.
 \end{proof}

%%%%%%%%%
\section*{Acknowledgements}
We thank Rajarama Bhat for providing us with the
linear-algebraic argument used to start the proof of Theorem \ref{irr repn}, 
complementing our group-theoretic arguments. Vern Paulsen pointed out to us
that coproducts could be used to give a proof of Theorem \ref{omax} that is cleaner than our original. 
We are also indebted to Remus Floricel and Allen Herman for their insights at an early stage of this work.
 
This work was partially supported by
the Discovery Grant program of
the Natural Sciences and Engineering Research Council of Canada and 
the Pacific Institute for the Mathematical Sciences Postdoctoral Fellowship program.

%------
% Insert the bibliography.
%------

\end{document}